\newtheorem{thm}{Theorem}[section]
\newtheorem{cor}[thm]{Corollary}
\newtheorem{lem}[thm]{Lemma}
\newtheorem{prop}[thm]{Proposition}
\newtheorem{richiamo}[thm]{}
\theoremstyle{definition}
\newtheorem{defn}[thm]{Definition}
\theoremstyle{remark}
\newtheorem{rem}[thm]{Remark}
\numberwithin{equation}{section}
\newtheorem{ex}[thm]{\bf Example}
\def\L2{L^{2}}
\def\M{\mathcal{M}}
\def\E{\mathcal{E}}
\def\FF{\Bbb{F}}
\def\Z{\Bbb{Z}}
\def\m1{^{-1}}
\def\H{\mathcal{H}}
\def\F{\mathcal{F}}
\def\o{\omega}
\def\O{\Omega}
\begin{document}

\title[]{Amenability and subexponential spectral growth rate of Dirichlet forms on von Neumann algebras}%
\author{Fabio Cipriani, Jean-Luc Sauvageot}%
\address{Dipartimento di Matematica, Politecnico di Milano, piazza Leonardo da Vinci 32, 20133 Milano, Italy.}%
\email{fabio.cipriani@polimi.it}%
\address{Institut de Math\'ematiques, CNRS-–Universit\'e Denis Diderot, F-75205 Paris Cedex 13, France}
\email{jean-luc.sauvageot@imj-prg.fr}
\thanks{}
\subjclass{46L57, 46L87, 46L54, 43A07}
\keywords{von Neumann algebra, Amenability, Haagerup Property (H), Dirichlet form spectral growth, countable discrete group.}%

\date{December 3rd, 2016, revised May 10th 2017, final revision October 8th 2017}
\dedicatory{}
\footnote{This work has been supported by GREFI-GENCO INDAM Italy-CNRS France and MIUR PRIN 2012 Project No 2012TC7588-003}
\begin{abstract}

In this work we apply Noncommutative Potential Theory to characterize (relative) amenability and the (relative) Haagerup Property $(H)$ of von Neumann algebras in terms of the spectral growth of Dirichlet forms. Examples deal with (inclusions of) countable discrete groups and free orthogonal compact quantum groups.
\end{abstract}
\maketitle
\section{Introduction and description of the results.}
Classical results relate the metric properties of conditionally negative definite functions on a countable discrete group $\Gamma$ to its approximation properties. For example, there exists a {\it proper}, conditionally negative definite function $\ell$ on $\Gamma$ if and only if there exists a sequence $\varphi_n\in c_0(\Gamma)$ of normalized, positive definite functions, vanishing at infinity and converging pointwise to the constant function $1$.
\par\noindent
In a celebrated work [Haa2], U. Haagerup proved that the length function of a free group $\mathbb{F}_n$ with $n\in\{2,\cdots ,\infty\}$ generators is negative definite, thus establishing for free groups the above approximation property. Since then the property is referred to as Haagerup Approximation Property (H) or Gromov a-T-menability (see [CCJJV]).\par\noindent
In addition, if for a conditionally negative definite function $\ell$ on a countable discrete group $\Gamma$, the series $\sum_{g\in\Gamma}e^{-t\ell (g)}$ converges for all $t>0$, then there exists a sequence $\varphi_n\in l^2(\Gamma)$ of normalized, positive definite functions, converging pointwise to the constant function $1$ ([GK Thm 5.3]). This latter property is just one of the several equivalent appearances of {\it amenability}, a property introduced by J. von Neumann in 1929 [vN] in order to explain the Banach-Tarski paradox in Euclidean spaces $\mathbb{R}^n$ exactly when $n\ge 3$.
\vskip0.1truecm\noindent
In this note we are going to discuss extensions of the above results concerning amenability for $\sigma$-finite von Neumann algebras $N$.
\vskip0.1truecm\noindent
The direction along which we are going to look for substitutes of the above summability condition related to amenability, is that of Noncommutative Potential Theory.
\vskip0.1truecm\noindent
This is suggested by a recent result by Caspers-Skalski [CaSk] asserting that $N$ has the (suitably formulated) Haagerup Approximation Property (H) if and only if there exists a Dirichlet form $(\E,\F)$ on the standard Hilbert space $L^2(N)$, having {\it discrete spectrum}.
\par\noindent
The link between the properness condition for a conditionally negative definite function $\ell$ on a countable discrete groups $\Gamma$ and the generalized one on von Neumann algebras, relies on the fact that, when the von Neumann algebra $N =L(\Gamma)$ is the one generated by the left regular representation of $\Gamma$, the quadratic form $\E_\ell[a]=\sum_{g\in\Gamma}\ell (g)|a(g)|^2$ on the standard space $L^2(L(\Gamma),\tau)\simeq l^2(\Gamma)$ is a Dirichlet form if and only if the function $\ell$ is conditionally negative definite and its spectrum is discrete if and only if $\ell$ is proper. Moreover, on a countable, finitely generated, discrete group $\Gamma$ with polynomial growth, there exist a conditionally negative definite functions $\ell$, having polynomial growth and growth dimensions arbitrarily close to the homogeneous dimension of $\Gamma$ (see [CS5]).
\vskip0.1truecm\noindent
This point of view thus suggests that a condition providing amenability of a von Neumann algebra with faithful normal state $(N,\o)$ could be the {\it subexponential spectral growth} of a Dirichlet form $(\E,\F)$ on the standard space $L^2(N,\o)$, i.e. the discreteness of the spectrum of $(\E,\F)$ and the summability of the series $\sum_{k\ge 0}e^{-t\lambda _k}$ for all $t>0$, where $\lambda_0\, ,\lambda_1\, ,\dots$ are the eigenvalues of $(\E,\F)$.
\vskip0.1truecm\noindent
The second fundamental fact that will allow to use Dirichlet forms to investigate the amenability of a von Neumann algebra, is the possibility to express this property in terms of Connes' {\it correspondences}: $N$ is amenable if and only if the identity or standard $N$-$N$-correspondence $L^2(N)$ is weakly contained in the coarse or Hilbert-Schmidt $N$-$N$-correspondence $L^2(N)\otimes L^2(N)$ (see [Po1]).
\vskip0.1truecm\noindent
In the second part of the work we provide a condition guaranteeing the {\it relative amenability} of an inclusion $B\subseteq N$ of finite von Neumann algebras introduced by Popa [Po1,2],
in terms of the existence of a Dirichlet form $(\E,\F)$ on $L^2(N)$ having {\it relative subexponential spectral growth}. Also this result is based on the possibility to express the relative amenability of a von Neumann algebra $N$ with respect to a subalgebra $B\subseteq N$ in terms of the weak containment of the identity correpondence $L^2(N)$ in the relative tensor product correspondence $L^2(N)\otimes_B L^2(N)$ introduced by Sauvageot [S1], [Po2].
\vskip0.1truecm\noindent
Using a suitable Dirichlet form constructed in [CFK], whose construction uses tools developed by M. Brannan in [Bra], we apply the above result to prove amenability of the von Neumann algebra of the free orthogonal quantum group $O^+_2$ and Haagerup Property (H) of the free orthogonal quantum groups $O^+_N$ for $N\ge 3$ (see also the recent [DFSW]), results firstly obtained by M. Brannan [Bra].
\par\noindent
A detailed discussion of the relative Haagerup Property (H) for inclusions of countable discrete groups in terms of conditionally negative definite functions is presented.
\vskip0.2truecm\noindent
The paper is organized as follows: in Section 2 we provide the necessary tools on noncommutative potential theory on von Neumann algebra as Dirichlet forms, Markovian semigroups and resolvents.
\par\noindent
In Section 3 we first recall some equivalent constructions of the coarse or Hilbert-Schmidt correspondence of a von Neumann algebra $N$ and some connections between the modular theories of $N$, of its opposite $N^o$, and of their spatial tensor product $N\overline{\otimes}N^o$. Then we introduce the spectral growth rate of a Dirichlet form and we prove the first main result of the work about the amenability of von Neumann algebra admitting a Dirichlet form with subexponential spectral growth rate. This part terminates with an application to the amenability of countable discrete groups and with a partially alternative approach to the proof of a result of M. Brannan [Bra] about the amenability of the free orthogonal quantum group $O_2^+$.
\par\noindent
Section 4 starts recalling some fundamental tool of the basic construction $\langle N,B\rangle$ for inclusions $B\subseteq N$ of finite von Neumann algebras, needed to prove the second main result of the work concerning the amenability of $N$ with respect to its subalgebra $B$. To formulate the criterion, we introduce the spectral growth rate of a $B$-invariant Dirichlet form on the standard space $L^2(N)$ relatively to the subalgebra $B$, using the compact ideal space $\mathcal{J}(\langle N,B\rangle)$ of $\langle N,B\rangle$ ( cf. [PO1,2]). The section terminates discussing relative amenability for two natural subalgebras $B_{\rm min}\subseteq N$ and $B_{\rm max}\subseteq N$ associated to any Dirichlet form.
\par\noindent
In Section 5 we extend the spectral characterization of the Haagerup Property (H) of von Neumann algebras with countable decomposable center due to M. Caspers and A. Skalski [CaSk] to the Relative Haageruup Property (H) for inclusions of finite von Neumann algebras $B\subseteq N$ formulated by S. Popa [Po 1,2].
\par\noindent
In Section 6 we discuss the relative Haagerup Property (H) for inclusions $H<G$ of countable discrete groups in terms of the existence of an $H$-invariant conditionally negative definite function on $G$ which is proper on the homogeneous space $G/H$ and in terms of quasi-normality of $H$ in $G$.
\vskip0.2truecm
The content of the present work has been the subject of talks given in Rome II (March 2015), Paris (GREFI-GENCO April 2015), Berkeley (UC Seminars September 2015), Krakov (September 2015), Varese (May 2016).
\section{Dirichlet forms on $\sigma$-finite von Neumann algebras}
Recall that a von Neumann algebra $N$ is $\sigma$-finite, or countably decomposable, if any collection of mutually orthogonal projections is at most countable and that this property is equivalent to the existence of a normal, faithful state. This is the case, for example, if $N$ acts faithfully on a separable Hilbert space.
\vskip0.2truecm\noindent
Let us consider on a $\sigma$-finite von Neumann algebra $N$ a fixed faithful, normal state $\o\in N_{*+}$. Let us denote by $(N, L^2(N,\o),L^2_+(N,\o), J_\o)$ the standard form of $N$ and by $\xi_\o\in L^2_+(N,\o)$ the cyclic vector representing the state (see [Haa1]).
\par\noindent
For a real vector $\xi=J_\o\xi\in L^2(N,\o)$, let us denote by $\xi\wedge\xi_\o$ the Hilbert projection of the vector $\xi$ onto the closed and convex set $C_\o :=\{\eta\in L^2(N,\o):\eta =J_\o\eta ,\,\,\xi_\o -\eta\in L^2_+(N,\o)\}$.
\vskip0.2truecm\noindent
We recall here the definition of Dirichlet form and Markovian semigroup (see [C1]) on a generic standard form of a $\sigma$-finite von Neumann algebra. For a definition particularized to the Haagerup standard form see [GL1].
\begin{defn}[Dirichlet forms on $\sigma$-finite von Neumann algebras]
A densely defined, nonnegative and lower semicontinuous quadratic form $\E:L^2(N,\o )\rightarrow [\,0,+\infty]$ is said to be:
\item{i)} {\it real} if
\begin{equation}
\E [J_\o(\xi)]=\E [\xi]\qquad  \xi\in L^2(N,\o)\, ;
\end{equation}
\item{ii)} a {\it Dirichlet form} if it is real and {\it Markovian} in the sense that
\begin{equation}
\E [\xi\wedge \xi_\o ]\le\E [\xi]\qquad  \xi=J_\o\xi\in L^2(N,\o)\, ;
\end{equation}
\item{iii)} a {\it completely Dirichlet form} if all the canonical extensions $\E_n$ to $L^2 (\mathbb{M}_n (N), \o\otimes {\rm tr\,}_n )$
\begin{equation}
\E_n [[\xi_{i,j}]_{i,j=1}^n] :=\sum_{i,j=1}^n \E[\xi_{i,j}]\qquad
[\xi_{i,j}]_{i,j=1}^n\in L^2 (\mathbb{M}_n (N), \o\otimes {\rm tr\,}_n )\, ,
\end{equation}
are Dirichlet forms.
\end{defn}
\noindent
By the self-polarity of the standard cone $L^2_+(N,\o)$, any real vector $\xi=J_\o\xi\in L^2(N,\o)$ decomposes uniquely as a difference $\xi=\xi_+ -\xi_-$ of two positive, orthogonal vectors $\xi_\pm\in L^2_+(\M,\o)$ (the positive part $\xi_+$ being just the Hilbert projection of $\xi$ onto the positive cone). The modulus of $\xi$ is then defined as the sum of the positive and negative parts $|\xi|:=\xi_+  + \xi_-$.
\par\noindent
Notice that, in general, the contraction property
\[
\E [\,|\xi|\, ]\le\E [\xi]\qquad  \xi=J_\o\xi\in L^2(A,\o)
\]
is a consequence of Markovianity and that it is actually equivalent to it when $\E [\xi_\o]=0$.
\vskip0.2truecm\noindent
The domain of the Dirichlet form is defined as the (dense) subspace of $L^2(N,\o)$ where the quadratic form is finite: $\F:=\{\xi\in L^2(N,\o): \E[\xi]<+\infty\}$. We will denote by $(L, D(L))$ the densely defined, self-adjoint, nonnegative operator on $L^2(A,\tau)$ associated with the closed quadratic form $(\E ,\F)$
\[
\F=D(\sqrt L)\qquad {\rm and}\qquad \E[\xi]=\|\sqrt{L}\xi\|^2\qquad \xi\in D(\sqrt L)=\F\, .
\]

\begin{defn}[Markovian semigroups on standard forms of von Neumann algebras]~
\par\noindent
a) A bounded operator $T$ on $L^2(N,\o)$ is said to be
\item{i)} {\it real} if it commutes with the modular conjugation: $T J_\o =J_\o T$,
\item{ii)} {\it positive} if it leaves globally invariant the positive cone: $T(L^2_+(N,\o))\subseteq L^2_+(N,\o)$,
\item{iii)} {\it Markovian} if it is real and it leaves globally invariant the closed, convex set $C_\o$:
\[
T(C_\o)\subseteq C_\o\, ,
\]
\item{iv)} {\it completely positive, resp. completely Markovian}, if it is real and all of its matrix amplifications $T^{(n)}$ to $L^2 (\mathbb{M}_n (N), \o\otimes {\rm tr\,}_n )\simeq L^2(N,\tau)\otimes L^2(\mathbb{M}_n (\mathbb{C}),{\rm tr\,}_n)$ defined by
\[
T^{(n)} [[\xi_{i,j}]_{i,j=1}^n] :=\sum_{i,j=1}^n [T\xi_{i,j}]_{i,j=1}^n\qquad
[\xi_{i,j}]_{i,j=1}^n\in L^2 (\mathbb{M}_n (N), \o\otimes {\rm tr\,}_n )\, ,
\]
are positive, resp. Markovian;
\par\noindent
b) A strongly continuous, uniformly bounded, self-adjoint semigroup $\{T_t:t>0\}$ on $L^2(N,\o)$ is said to be real (resp. positive, Markovian, completely positive, completely Markovian) if the operators $T_t$ are real (resp. positive, Markovian, completely positive, completely Markovian) for all $t>0$.
\end{defn}
\noindent
In literature, property in item iii) above is sometime termed {\it submarkovian}, while {\it markovian} is meant positivity preserving {\it and unital}. Our choice is only dictated by a willing of simplicity.
\vskip0.2truecm\noindent
Notice that $T$ is (completely) Markovian iff it is (completely) positive and $T\xi_\o\le\xi_\o$.
\vskip0.2truecm\noindent
Notice that if $N$ is abelian, then positive (resp. Markovian) operators are automatically completely positive (resp. completely Markovian).
\vskip0.2truecm\noindent
Dirichlet forms are in one-to-one correspondence with Markovian semigroups (see [C1]) through the relations
\[
T_t =e^{-tL}\, \qquad t\geq 0\]
where $(L,D(L))$ is the self-adjoint operator associated to the quadratic form $(\E,\F)$.
\vskip0.2truecm
Dirichlet forms and Markovian semigroups are also in correspondence with a class of semigroups on the von Neumann algebra. To state this fundamental relation, let us consider the {\it symmetric embedding} $i_\omega$ determined by the cyclic vector $\xi_\omega$
\[
i_\o:N\rightarrow L^2(N,\o)\qquad i_\o (x):=\Delta_\o^{\frac{1}{4}}x\xi_\o\qquad x\in N\, .
\]
Here, $\Delta_\o$ is the modular operator associated with the faithful normal state $\omega$ (see [T]).  We will denote by $\{\sigma^\o_t:t\in\mathbb{R}\}$ the modular automorphisms group associated to $\o$  and by $N_{\sigma^{\o}}\subseteq N$ the subalgebra of elements which are analytic with respect to it. Then (see [C1]) (completely) Dirichlet forms $(\E,\F)$ and (completely) Markovian semigroups $\{T_t:t>0\}$ on $L^2(N,\o)$ are in one-to-one correspondence with those weakly$^*$-continuous, (completely) positive and contractive semigroups $\{S_t:t>0\}$ on the von Neumann algebra $N$ which are {\it modular $\o$-symmetric} in the sense that
\begin{equation}\label{symmetry}
\o (S_t (x)\sigma^{\o}_{-i/2}(y))=\o (\sigma^{\o}_{-i/2}(x)S_t (y))\qquad x,y\in N_{\sigma^{\o}}\, ,\quad t>0\, ,
\end{equation}
through the relation
\[
\quad i_\o (S_t (x))=T_t (i_\o (x))\qquad x\in N\, ,\quad t>0\, .
\]
Relation (\ref{symmetry}) is called {\it modular symmetry} and it is equivalent to
\begin{equation}\label{symmetry2}
(J_\o y\xi_\o|S_t (x)\xi_\o)=(J_\o S_t(y)\xi_\o |x\xi_\o)\qquad x,y\in N\, ,\quad t>0\, .
\end{equation}

\begin{rem}
In case $\o$ is a trace, the symmetric embedding reduces to $i_\o(x)=x\xi_\o$ while the modular symmetry simplifies to $\o (S_t (x)y)=\o (xS_t (y))$ for $x,y\in N$ and $t>0$.
\end{rem}
\vskip0.2truecm
\centerline{{\it To shorten notations, in the forthcoming part of the paper}}
\centerline{{\it "Dirichlet form" will always mean "completely Dirichlet form" and}}
\centerline{{\it "Markovian semigroup" will always mean "completely Markovian semigroup".}}
\vskip0.2truecm\noindent
Whenever no confusion can arise, the modular conjugation $J_\o$ will be sometime denoted $J$.

\subsection{Examples of Dirichlet forms}
Instances of the notions introduced above may be found in various frameworks. We just recall here some examples of different origins. One may consult the fundamental works [BeDe], [FOT] for the commutative case and [C2], [C3] for surveys in the noncommutative setting.\par\noindent
a) The archetypical Dirichlet form on the Euclidean space $\mathbb{R}^n$ or, more generally, on any Riemannian manifold $V$, endowed with its Riemannian measure $m$, is the Dirichlet integral
\[
\E[a]=\int_V |\nabla a|^2\, dm\qquad a\in L^2 (V,m)\, .
\]
In this case the trace on $L^\infty (V,m)$ is given by the integral with respect to the measure $m$ and the form domain is the Sobolev space $H^1 (V)\subset L^2 (V,m)$. The associated Markovian semigroup is the familiar heat semigroup of the Riemannian manifold. Interesting variations of the above Dirichlet integral are the Dirichlet forms of type
\[
\E[a]:=\int_{\mathbb{R}^n} |\nabla a|^2\, d\mu\qquad a\in L^2 (\mathbb{R}^n,\mu)\, ,
\]
that for suitable choices of positive Radon measures $\mu$, are ground state representations of Hamiltonian operators in Quantum Mechanics.
\par\noindent
b) Dirichlet forms are a fundamental tool to introduce differential calculus and study Markovian stochastic processes on fractal sets (see [Ki], [CS3], [CGIS 1,2]).
\par\noindent
c) On a countable discrete group $\Gamma$, any conditionally negative definite function $\ell$ gives rise to a Dirichlet form
\[
\E_\ell [\xi] := \sum_{s\in\Gamma} |\xi(s)|^2 \ell(s)\, ,
\]
on the Hilbert space $l^2 (\Gamma)$, considered as the standard Hilbert space of the left von Neumann algebra $L(\Gamma)$ generated by the left regular representation of $\Gamma$ (see [CS1], [C2]). The associated Markovian semigroup is simply given by the multiplication operator
\[
T_t (a)(s)=e^{-t\ell(s)}a(s)\qquad t>0\,,\quad s\in G\,,\quad a\in l^2(\Gamma)\, .
\]
d) On noncommutative tori $A_\theta$,  $\theta\in [0,1]$ (see [Co2]), which are C$^*$-algebras generated by two unitaries $u$ and $v$, satisfying the relation
\[
vu=e^{2i\pi\theta}uv\, ,
\]
the {\it heat semigroup} $\{T_t :t\ge 0\}$ defined by
\[
T_t(u^nv^m)= e^{-t(n^2+m^2)}u^nv^m \qquad (n,m)\in \mathbb{Z}^2\, ,
\]
is a $\tau$-symmetric Markovian semigroup on the von Neumann algebra $N_\theta$ generated by the G.N.S. representation of the faithful, tracial state $\tau : A_\theta \to\mathbb{C}$ characterized by
\[
\tau(u^n v^m)=\delta_{n,0}\delta_{m,0} \qquad n,m\in \mathbb{Z}\, .
\]
e) There exists a general interplay between Dirichlet forms and differential calculus on tracial C$^*$-algebras $(A,\tau)$ (see [S 2,3], [CS1]) and this provides a source of Dirichlet forms on von Neumann algebras (generated by $A$ in the G.N.S. representation of the trace). In fact, denoting by $N$ the von Neumann algebra generated by the G.N.S. representation of the trace, if  $(\partial, D(\partial))$ is a densely defined closable derivation from $L^2(N,\tau)$ to Hilbert $A$-bimodule $\H$, then the closure of the quadratic form
\[
\E[a]:=\|\partial a\|^2_\H\qquad a\in \F:=D(\partial)
\]
is a Dirichlet form on $L^2(N,\tau)$. Viceversa, any Dirichlet form on $L^2(N,\tau)$ whose domain is dense in $A$  arises in this way from an essentially unique derivation on $A$ canonically associated with it (see [CS1]).
Examples of this differential calculus can be found in all the situations illustrated above as well as in the geometric framework of Riemannian foliations (see [S4]) and also in the framework of Voiculescu's Free Probability theory (see [V1]). There, the Dirichlet form associated to Voiculescu's derivation presents several aspects connected to  Noncommutative Hilbert Transform, Free Fischer Information and Free Entropy.

\section{Amenability of $\sigma$-finite von Neumann algebras}
In this section we relate a certain characteristic of the spectrum of a Dirichlet form to the amenability of the von Neumann algebra. Recall that a von Neumann algebra $N$ is said to be {\it amenable} if, for every normal dual Banach $N$-bimodule $X$, the derivations $\delta:N\to X$ are all inner, i.e. they have the form
\[
\delta (x)=x\xi-\xi x\qquad x\in N
\]
form some vector $\xi\in X$. It is a remarkable fact, and the byproduct of a tour de force, that this property is equivalent to several others of apparently completely different nature, such as {\it hyperfiniteness, injectivity, semi-discreteness, Schwartz property P, Tomiyama property E}. We refer to [Co2 Ch. V] for a review on these connections. Among the main examples of amenable von Neumann algebras, we recall: the von Neumann algebra of a locally compact amenable group, the crossed product of an abelian von Neumann algebra by an amenable locally compact group, the commutant von Neumann algebra of any continuous unitary representation of a connected locally compact group, the von Neumann algebra generated by any representation of a nuclear C$^*$-algebra.
\subsection{Standard form of the spatial tensor product of von Neumann algebras}
Here we summarize some well known properties of the standard form of the spatial tensor product of two von Neumann algebras in terms of Hilbert-Schmidt operators (details may be found in [T]), mainly with the intention to make precise, in the next section, some properties of the symmetric embedding of a product state. More precisely we shall use the following facts:
\begin{richiamo}
let $N\subseteq \mathcal{B(H)}$ be a von Neumann algebra. A vector $\xi\in H$ is cyclic for the commutant $N'$ if and only if it is separating for $N$;
\end{richiamo}

\begin{richiamo}
let $N_k\subseteq \mathcal{B}(H_k)$ $k=1,2$ be von Neumann algebras. If the vectors $\xi_k\in H_k$ , $k=1,2$ are cyclic for $N_k$, then the vector $\xi_1\otimes \xi_2\in H_1\otimes H_2$ is cyclic for the spatial tensor product $N_1\overline{\otimes} N_2$;
\end{richiamo}
\begin{richiamo}
let $N_k$ $k=1,2$ be von Neumann algebras and $L^2(N_k)$ their standard forms. If the vectors $\xi_k\in L^2_+ (N_k)$ $k=1,2$ are cyclic for $N_k$ (hence separating) then the vector $\xi_1\otimes \xi_2\in L^2 (N_1)\otimes L^2 (N_2)$ is cyclic and separating for the spatial tensor product $N_1\overline{\otimes} N_2$.
\end{richiamo}
\subsection{Symmetric embedding of tensor product of von Neumann algebras}
Here we recall the definition and a property of the symmetric embedding of a von Neumann algebra in its standard Hilbert space. Let $N$ be a $\sigma$-finite von Neumann algebra and $\o\in N_{*,+}$ a faithful, normal state.
\par\noindent
In the standard form $(N, L^2(N,\o), L^2_+ (N,\o))$, we denote by $\xi_\o\in L^2_+(N,\o)$ the cyclic vector representing the state $\o$ and by $J_\o$ and $\Delta_\o$ its modular conjugation and modular operator, respectively.
\par\noindent
The symmetric embedding $i_\o:N\to L^2(N,\o)$, defined by $i_\o (x):=\Delta_\o^{\frac{1}{4}}x\xi_\o$ for $x\in N$, is a completely positive contraction with dense range, which is also continuous between the weak$^*$-topology of $N$ and the weak topology of $L^2(N,\o)$. It is also an order isomorphism of completely ordered sets between $\{x=x^*\in N:0\le x\le 1_N\}$ and $\{\xi=J_\o\xi\in L^2(N,\o): 0\le\xi\le\xi_\o)\}$ (see [Ara], [Co1], [Haa1] and [BR]). We shall make use of the following properties:
\begin{richiamo}
Let $N_k$ $k=1,2$ be von Neumann algebras and $L^2(N_k)$ their standard forms. Consider the cyclic (hence separating) vectors $\xi_k\in L^2 (N_k)$ $k=1,2$ and the cyclic and separating vector $\xi_1\otimes \xi_2\in H_1\otimes H_2$ for the spatial tensor product $N_1\overline{\otimes} N_2$.
\par\noindent
Let $J_k, \Delta_k$ be the modular conjugation and the modular operator associated to $\xi_k\in H_k$ $k=1,2$ and $J_{\xi_1\otimes\xi_2}, \Delta_{\xi_1\otimes\xi_2}$ be the modular conjugation and the modular operator associated to $\xi_1\otimes \xi_2$. Then the following identifications hold true
\begin{itemize}
\item $J_{\xi_1\otimes\xi_2}=J_{\xi_1} \otimes J_{\xi_2}$;
\item $N_1\xi_1 \odot N_2\xi_2\subseteq H_1\otimes H_2$ is a core for the closed operator $\Delta^{\frac{1}{2}}_{\xi_1\otimes\xi_2}$;
\item $\Delta^{\frac{1}{2}}_{\xi_1\otimes\xi_2}(\eta_1\otimes \eta_2) = \Delta^{\frac{1}{2}}_{\xi_1}(\eta_1)\otimes \Delta^{\frac{1}{2}}_{\xi_2}(\eta_2)$ for $\eta_k\in N_k\xi_k$ and $k=1,2$.
\end{itemize}
\end{richiamo}
\par\noindent
We will denote by $N^\circ$ the opposite algebra of $N$: it coincides with $N$ as a vector space but the product is taken in the reverse order $x^\circ y^\circ :=(yx)^\circ$ for $x^\circ , y^\circ\in N^\circ$. As customary, we adopt the convention that elements $y\in N$, when regarded as elements of the opposite algebra are denoted by $y^\circ\in N^\circ$.
\par\noindent
A linear functional $\o$ on $N$, when considered as a linear functional on the opposite algebra $N^\circ$ is denoted by $\o^\circ$ and called the {\it opposite} of $\o$. As $N$ and $N^\circ$ share the same positive cone, if $\o$ is positive on $N$ so is $\o^\circ$ on $N^\circ$ and if $\o$ is normal so does its opposite.
\vskip0.2truecm\noindent
By the properties of standard forms of von Neumann algebras, it follows that for the standard form $(N^\circ, L^2(N^\circ,\o^\circ), L^2_+ (N^\circ,\o^\circ))$ of $N^\circ$ one has the following identifications
\[
L^2(N^\circ,\o^\circ)=L^2(N,\o)\, ,\quad L^2_+(N^\circ,\o^\circ)=L^2_+(N,\o)\, ,\quad J_\o = J_{\o^\circ}\, ,\quad\Delta_{\o^\circ}=\Delta_\o^{-1}\, , \quad\xi_{\o^\circ}=\xi_\o\, .
\]
Using the isomorphism between $N^\circ$ and the commutant $N^\prime$, given by $N^\circ\ni y^\circ\rightarrow J_\o y^* J_\o\in N^\prime$, we can regard $L^2(N,\o)$ not only as a left $N$-module but also as a left $N^\circ$-module, hence as a right $N$-module and finally as a $N$-$N$-bimodule
\[
y^\circ\xi := J_\o y^*J_\o\xi\, ,\quad \xi y :=J_\o y^*J_\o\xi\, ,\quad x\xi y :=xJ_\o y^*J_\o\xi\,\qquad x, y\in N\, ,\xi\in L^2 (N,\o)\, .
\]
\par\noindent
The symmetric embeddings associated to $\o$ and $\o^\circ$ are related by
\[
i_{\o^\circ}(y^\circ)=\Delta^{\frac{1}{4}}_{\o^\circ} (\xi_\o y)=\Delta^{\frac{1}{4}}_{\o^\circ} J_\o y^*J_\o\xi_\o=\Delta^{-\frac{1}{4}}_\o \Delta^{\frac{1}{2}}_\o y\xi_\o=\Delta^{\frac{1}{4}}_\o y\xi_\o=i_\o (y)\, .
\]
\[
J_\o (i_\o (y^*))= J_\o \Delta^{\frac{1}{4}}_\o (y^*\xi_\o)=J_\o \Delta^{\frac{1}{4}}_\o J_\o \Delta^{\frac{1}{2}}_\o(y\xi_\o)=\Delta^{\frac{1}{4}}_\o (y\xi_\o)=i_\o (y)=i_{\o^\circ}(y^\circ)\, .
\]

\subsection{Coarse correspondence}
Recall that a Hilbert-Schmidt operator $T$ is a bounded operator on $L^2(N,\o)$ such that ${\rm Trace}_{L^2(N,\o)} (T^* T)<+\infty$. It may be represented as
\[
T\xi:=\sum_{k=0}^\infty \mu_k (\eta_k|\xi)\xi_k\qquad \xi\in L^2(N,\o)
\]
in terms of suitable orthonormal systems $\{\eta_k :k\in \mathbb{N}\}\, ,\,\,\{\xi_k :k\in \mathbb{N}\}\subset L^2(N,\o)$ and a sequence $\{\mu_k :k\in\mathbb{N}\}\subset\mathbb{C}$ such that $\sum_{k=0}^\infty |\mu_k|^2<+\infty$. The set of Hilbert-Schmidt operators $HS(L^2(N,\o))$ is a Hilbert space under the scalar product $(T_1 |T_2):={\rm Trace}_{L^2(N,\o)} (T_1^* T_2)$.

\begin{lem}
The binormal representations $\pi^1_{\rm co}, \pi^2_{\rm co}, \pi^3_{\rm co}$ of $N\otimes_{\rm max}N^\circ$, characterized by
\[
\begin{split}
&\pi^1_{\rm co}:N\otimes_{\rm max}N^\circ\rightarrow \mathcal{B}({\rm HS\, }(L^2(N,\tau))) \\
\pi^1_{\rm co} (x\otimes y^o)(T)&:=xTy\qquad x, y\in N\, ,\quad T\in {\rm HS\, }(L^2(N,\tau))\, , \\
\end{split}
\]
\[
\begin{split}
&\pi^2_{\rm co}:N\otimes_{\rm max}N^\circ\rightarrow \mathcal{B}(L^2(N,\tau)\otimes \overline{L^2(N,\tau)}) \\
\pi^2_{\rm co} (x\otimes y^o)(\xi\otimes\overline{\eta})&:=x\xi\otimes\overline{\eta y}\qquad x, y\in N\, ,\quad \xi, \eta\in L^2(N,\tau)\, \\
\end{split}
\]
\[
\begin{split}
&\pi^3_{\rm co}:N\otimes_{\rm max}N^\circ\rightarrow \mathcal{B}(L^2(N,\tau)\otimes L^2(N,\tau)) \\
\pi^3_{\rm co} (x\otimes y^o)(\xi\otimes\eta)&:=x\xi\otimes\eta y\qquad x, y\in N\, ,\quad \xi, \eta\in L^2(N,\tau)\, , \\
\end{split}
\]
are unitarely equivalent by
\[
\begin{split}
&U:L^2(N,\tau)\otimes \overline{ L^2(N,\tau)}\rightarrow L^2(N,\tau)\otimes {L^2(N,\tau)}\qquad U(\xi\otimes\overline{\eta}):= \xi\otimes J_\o{}\eta \\
&V:L^2(N,\tau)\otimes \overline{L^2(N,\tau)}\rightarrow {\rm HS}(L^2(N,\tau))\qquad  V(\xi\otimes \overline{\eta})(\zeta):=(\eta|\zeta)\xi\, .\\
\end{split}
\]
They give rise by weak closure
\[
(\pi^3_{\rm co}(N\otimes_{\rm max}N^\circ))'' = N\overline{\otimes} N^\circ
\]
of the spatial tensor product of $N$ by its opposite $N^\circ$.
\end{lem}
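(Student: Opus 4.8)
The plan is to handle the two unitaries one at a time---first checking that each is genuinely unitary, then that it intertwines the appropriate pair of representations---and only afterwards to compute directly the von Neumann algebra generated by the image of $\pi^3_{\rm co}$. For $V$, the assignment $\xi\otimes\overline\eta\mapsto(\eta|\cdot)\xi$ is the classical identification of $L^2(N,\tau)\otimes\overline{L^2(N,\tau)}$ with the rank-one operators on $L^2(N,\tau)$; since $\|(\eta|\cdot)\xi\|_{HS}=\|\xi\|\,\|\eta\|=\|\xi\otimes\overline\eta\|$ on elementary tensors, it extends to an isometry whose range contains all finite-rank operators and is therefore dense in ${\rm HS}(L^2(N,\tau))$, so $V$ is unitary. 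For $U$ I would note that $\overline\eta\mapsto J_\o\eta$ is the composition of the canonical antiunitary $\overline{L^2(N,\tau)}\to L^2(N,\tau)$ with the antiunitary $J_\o$, hence a unitary, and $U=\mathrm{id}\otimes(\overline\eta\mapsto J_\o\eta)$.

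Next I would verify the two intertwining relations $\pi^1_{\rm co}V=V\pi^2_{\rm co}$ and $\pi^3_{\rm co}U=U\pi^2_{\rm co}$ on the total family of elementary tensors $\xi\otimes\overline\eta$, which suffices by linearity and continuity. The point to keep track of is that the right action carried on the conjugate space $\overline{L^2(N,\tau)}$ is the \emph{contragredient} one, $\overline\eta\cdot y=\overline{y^*\eta}$, which is what makes $\pi^2_{\rm co}$ a genuine right $N$-action. Under $V$ this contragredient action corresponds, via $H\otimes\overline H\simeq{\rm HS}(H)$, to precomposition with left multiplication, i.e.\ $T\mapsto TL_y$, and matching $V(x\xi\otimes\overline{y^*\eta})=|x\xi\rangle\langle y^*\eta|$ against $L_xTL_y$ gives the first relation. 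Under $U$ the same contragredient action is sent to the standard right action on $L^2(N,\tau)$, since $U(\overline{y^*\eta})=J_\o y^*\eta=(J_\o\eta)\cdot y$, which yields the second relation using only $J_\o^2=I$ and the antiunitarity of $J_\o$. This bookkeeping of adjoints and modular conjugations is conceptually routine but, in the non-tracial setting, the most error-prone part; I expect the correct placement of the stars (distinguishing $\overline{y^*\eta}$ from $\overline{J_\o y^*J_\o\eta}$) to be the main subtlety.

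Finally, for the weak closure I would read off that $\pi^3_{\rm co}(x\otimes y^\circ)=L_x\otimes(J_\o y^*J_\o)$, namely left multiplication by $x$ on the first factor and the right action of $y$ on the second. Hence the weak closure of the image of the algebraic tensor product is the von Neumann algebra generated by $L_N\otimes\mathrm{id}$ and $\mathrm{id}\otimes J_\o NJ_\o$. Since $N$ acts standardly one has $L_N{}''=N$, while the Tomita commutation theorem gives $J_\o NJ_\o=N'$; by the very definition of the spatial tensor product the generated algebra is then $N\,\overline{\otimes}\,N'$, and the $*$-anti-isomorphism $N^\circ\ni y^\circ\mapsto J_\o y^*J_\o\in N'$ recorded above identifies this with $N\,\overline{\otimes}\,N^\circ$. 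Beyond the bookkeeping of the previous step, the only nontrivial ingredients here are the invocation of Tomita's commutation theorem and this opposite--commutant identification; everything else is a density and continuity argument.
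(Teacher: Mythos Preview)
The paper states this lemma without proof, treating the unitary equivalences and the identification of the weak closure as standard facts about the coarse correspondence (the surrounding subsections simply ``summarize some well known properties''). Your argument is exactly the routine verification one would supply: check that $U$ and $V$ are unitary on elementary tensors, verify the two intertwining relations on the same total family, and then read off the generated von Neumann algebra via Tomita's theorem and the isomorphism $N^\circ\simeq N'$, $y^\circ\mapsto J_\o y^*J_\o$. This is correct and is the standard proof.

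One remark on the bookkeeping you rightly flag as delicate: the paper writes the second tensor leg of $\pi^2_{\rm co}$ as $\overline{\eta y}$, which taken literally (with $\eta y=J_\o y^*J_\o\eta$) does \emph{not} make the intertwining with $\pi^1_{\rm co}$ via $V$ come out; it is your contragredient reading $\overline\eta\cdot y=\overline{y^{*}\eta}$ that makes both intertwinings work, since then $V(x\xi\otimes\overline{y^{*}\eta})=|x\xi\rangle\langle y^{*}\eta|=x\,|\xi\rangle\langle\eta|\,y$ and $U(\overline{y^{*}\eta})=J_\o y^{*}\eta=(J_\o\eta)y$. So your interpretation is the correct one, and your caution about ``the correct placement of the stars'' is well founded.
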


\begin{lem}
The normal extension of the coarse representation $\pi_{\rm co}$ of the ${\rm C}^*$-algebra $N\otimes_{\rm max}N^\circ$ to the von Neumann tensor product $N\overline{\otimes} N^\circ$ is the standard representation of $N\overline{\otimes} N^\circ$ (and it will still denoted by the same symbol).
\par\noindent
The standard positive cone in the various equivalent representations is determined as
\begin{itemize}
  \item ${\rm HS}(L^2(N,\o))_+$, the set of all nonnegative Hilbert-Schmidt operators on $L^2(N,\o)$;
  \item $(L^2(N,\tau)\otimes \overline{L^2(N,\o)})_+$, generated by the vectors $\xi\otimes\overline{\xi}$ with $\xi\in L^2(N,\o)$;
  \item $(L^2(N,\tau)\otimes L^2(N,\o))_+$, generated by the vectors $\xi\otimes J_\o{}\xi$ with $\xi\in L^2(N,\o)$.
\end{itemize}
The standard Hilbert space and the positive cone of $N\overline{\otimes}N^\circ$ will be denoted also by
\[
L^2(N\overline{\otimes} N^\circ,\o\otimes\o^\circ)\, ,\qquad L^2_+(N\overline{\otimes} N^\circ,\o\otimes\o^\circ)\, .
\]
\end{lem}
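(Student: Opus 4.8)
The plan is to obtain both assertions from the uniqueness of the standard form together with the behaviour of standard forms under spatial tensor products recorded in the three facts recalled above. First I would recognise $\pi^3_{\rm co}$ as the G.N.S. representation of the faithful normal product state $\o\otimes\o^\circ$ on $N\overline{\otimes}N^\circ$. Indeed $x\mapsto x\xi$ is the standard left action of $N$ on $L^2(N,\o)$, while through the identification $N^\circ\ni y^\circ\mapsto J_\o y^*J_\o\in N'$ the action $\eta\mapsto\eta y$ is exactly the standard action of $N^\circ$ on $L^2(N^\circ,\o^\circ)=L^2(N,\o)$; thus $\pi^3_{\rm co}$ is the tensor product of the two standard representations. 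Since $\o\otimes\o^\circ$ is faithful and normal, the vector $\xi_\o\otimes\xi_\o$ is cyclic and separating for $N\overline{\otimes}N^\circ$ by the recalled tensor-product fact, so by Tomita--Takesaki theory the weak closure $\pi^3_{\rm co}(N\otimes_{\rm max}N^\circ)''=N\overline{\otimes}N^\circ$ acts in standard form, with modular conjugation $J_{\o\otimes\o^\circ}=J_\o\otimes J_\o$ and modular operator $\Delta_{\o\otimes\o^\circ}=\Delta_\o\otimes\Delta_{\o^\circ}=\Delta_\o\otimes\Delta_\o^{-1}$, again by the recalled identifications. Because $U$ and $V$ of the preceding Lemma implement unitary equivalences of $\pi^1_{\rm co},\pi^2_{\rm co},\pi^3_{\rm co}$, the same conclusion holds in all three pictures, which proves the first assertion.

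For the positive cone I would start from the intrinsic description of the natural cone as $P^\natural=\overline{\Delta^{1/4}_{\o\otimes\o^\circ}\,(N\overline{\otimes}N^\circ)_+\,(\xi_\o\otimes\xi_\o)}=\overline{i_{\o\otimes\o^\circ}\big((N\overline{\otimes}N^\circ)_+\big)}$, where $i_{\o\otimes\o^\circ}$ is the symmetric embedding of the product algebra. Using $\Delta^{1/4}_{\o\otimes\o^\circ}=\Delta_\o^{1/4}\otimes\Delta_\o^{-1/4}$ one obtains the factorisation $i_{\o\otimes\o^\circ}=i_\o\otimes i_{\o^\circ}$, and since $i_{\o^\circ}(y^\circ)=i_\o(y)$ this already exhibits $P^\natural$ in the picture $L^2(N,\o)\otimes L^2(N,\o)$. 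I would then transport $P^\natural$ to the Hilbert--Schmidt picture by $V\circ U^{-1}$, which sends $\xi\otimes\theta$ to the operator $\zeta\mapsto(J_\o\theta|\zeta)\xi$; the modular conjugation $J_\o$ built into $U$ is precisely what turns the generators $\xi\otimes J_\o\xi$ into the nonnegative rank--one operators $\zeta\mapsto(\xi|\zeta)\xi$, which generate by spectral decomposition the whole cone ${\rm HS}(L^2(N,\o))_+$. The three stated descriptions are then images of one another under $U$ and $V$.

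The routine part is the bookkeeping that $U$ and $V$ match the three families of generators, and that the nonnegative Hilbert--Schmidt operators are the closed cone generated by the rank--one operators $\zeta\mapsto(\xi|\zeta)\xi$. The step I expect to be the real obstacle is proving that the transported cone is \emph{all} of ${\rm HS}(L^2(N,\o))_+$ (equivalently the full cone generated by the vectors $\xi\otimes J_\o\xi$), and not merely the strictly smaller cone generated by elementary tensors $\eta_1\otimes\eta_2$ of positive vectors arising from simple positive tensors $x\otimes y^\circ$. For this I would exploit self-duality: both $P^\natural$ and ${\rm HS}(L^2(N,\o))_+$ are self-dual cones in the real Hilbert space of $J$-fixed Hilbert--Schmidt operators, so once the inclusion $P^\natural\subseteq{\rm HS}(L^2(N,\o))_+$ is established, the reverse inclusion is automatic, a self-dual cone contained in another being forced to coincide with it.

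The inclusion itself is where the modular subtlety of the non-tracial case enters. In the tracial situation $\Delta=I$ and $i_{\o\otimes\o^\circ}$ reduces to $A\mapsto A(\xi_\tau\otimes\xi_\tau)$, so the image is manifestly nonnegative; in general one must control the modular factor $\Delta_\o^{1/4}\otimes\Delta_\o^{-1/4}$, and the tool for this is the core $N\xi_\o\odot N^\circ\xi_\o$ on which $\Delta^{1/2}_{\o\otimes\o^\circ}$ factorises as $\Delta_\o^{1/2}\otimes\Delta_\o^{-1/2}$, as recalled above. Threading this factorisation, together with the order-isomorphism property of the symmetric embedding, through the identification $\xi\otimes\theta\mapsto(\,\zeta\mapsto(J_\o\theta|\zeta)\xi\,)$ is what converts the modular twist into genuine positivity of the corresponding Hilbert--Schmidt operators, and I expect this verification to carry the bulk of the work.
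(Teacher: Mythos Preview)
The paper does not actually prove this lemma; it is stated without proof, as a consequence of the well-known facts about standard forms of tensor products recalled in items 3.1--3.4. Your proposal therefore supplies an argument where the paper offers none.

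The route you take is correct and is the standard one: identify $\pi^3_{\rm co}$ with the GNS representation of the faithful normal product state $\o\otimes\o^\circ$, read off the modular data $J_{\o\otimes\o^\circ}=J_\o\otimes J_\o$ and $\Delta_{\o\otimes\o^\circ}=\Delta_\o\otimes\Delta_\o^{-1}$ from the tensor-product facts recalled in 3.4, transport everything to the Hilbert--Schmidt picture via $V\circ U^{-1}$, and close with self-duality. Your computation that $V\circ U^{-1}$ sends $\xi\otimes J_\o\xi$ to the rank-one operator $\zeta\mapsto(\xi|\zeta)\xi$ is exactly right, and since every nonnegative Hilbert--Schmidt operator is a norm-convergent sum of such rank-ones by the spectral theorem, the cone they generate is indeed all of ${\rm HS}(L^2(N,\o))_+$.

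The self-duality step you single out as the real obstacle is in fact lighter than you suggest. That ${\rm HS}(L^2(N,\o))_+$ is self-dual in the real subspace of self-adjoint Hilbert--Schmidt operators is elementary: ${\rm Tr}(ST)\ge 0$ for all $S\ge 0$ if and only if $T\ge 0$. Once you know this and the inclusion $P^\natural\subseteq{\rm HS}_+$, equality is forced, since two self-dual cones in the same real Hilbert space with one contained in the other must coincide. The inclusion itself follows directly from your observation that the generators $\xi\otimes J_\o\xi$ of $P^\natural$ land on nonnegative rank-one operators; there is no need to wrestle separately with the modular factor $\Delta_\o^{1/4}\otimes\Delta_\o^{-1/4}$ on general positive elements of $N\overline{\otimes}N^\circ$, because the closed cone generated by $\{\xi\otimes J_\o\xi:\xi\in L^2(N,\o)\}$ already \emph{is} the natural cone (this is one of the equivalent descriptions of $P^\natural$, namely $P^\natural=\overline{\{\pi(a)J\pi(a)J\,\xi_0: a\in N\overline{\otimes}N^\circ\}}$ applied with $\xi_0=\xi_\o\otimes\xi_\o$, together with the density of $N\xi_\o$).
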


\begin{lem}\label{cp}
Let $T:L^2(N,\o)\rightarrow L^2(N,\o)$ be a bounded operator and consider on the involutive algebra $N\odot N^\circ$, the linear functional determined by
\[
\Theta_T:N\odot N^\circ\rightarrow\mathbb{C}\qquad\Theta_T (x\otimes y^\circ):= (i_\o (y^*)|T i_\o (x))\qquad x\otimes y^\circ\in N\odot N^\circ\, .
\]
Then $\Theta_T$ is a positive linear functional on $N\odot N^\circ$ if and only if $T$ is completely positive  (c.f. Definition 2.2 iv)).
\end{lem}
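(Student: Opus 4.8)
The plan is to reduce the positivity of $\Theta_T$ to the cone-positivity of all the matrix amplifications $T^{(m)}$, by recognizing $\Theta_T$ evaluated on a square $a^*a$ as the pairing, through $T^{(m)}$, of two vectors of the standard positive cone of $\mathbb{M}_m(N)$. First I would write a generic element of $N\odot N^\circ$ as $a=\sum_{i=1}^m x_i\otimes y_i^\circ$ and compute, using $(y_i^\circ)^*=(y_i^*)^\circ$ and the opposite product $u^\circ v^\circ=(vu)^\circ$, that $a^*a=\sum_{i,j} x_j^* x_i\otimes (y_iy_j^*)^\circ$, whence $\Theta_T(a^*a)=\sum_{i,j}\bigl(i_\o(y_jy_i^*)\,\big|\,T\,i_\o(x_j^*x_i)\bigr)$. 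The central algebraic observation is that this double sum is exactly a matrix-amplified pairing: setting $P:=\mathbf x^*\mathbf x$ and $Q:=\mathbf y\mathbf y^*$ in $\mathbb{M}_m(N)$, with $\mathbf x=(x_1,\dots,x_m)$ regarded as a row and $\mathbf y$ as the column with entries $y_i$, one has $P_{ji}=x_j^*x_i$ and $Q_{ji}=y_jy_i^*$, so that $\Theta_T(a^*a)=\sum_{k,l}\bigl(i_\o(Q_{kl})\,\big|\,T\,i_\o(P_{kl})\bigr)$ with $P,Q\ge 0$.

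Next I would pass to the level-$m$ standard form. Since $\mathrm{tr}_m$ is a trace, the modular operator of $\o\otimes\mathrm{tr}_m$ is $\Delta_\o\otimes 1$, so the symmetric embedding factors entrywise, $i_{\o\otimes\mathrm{tr}_m}(A)=[\,i_\o(A_{kl})\,]$ up to a positive constant, and by Definition 2.2 iv) the amplification $T^{(m)}$ also acts entrywise. Consequently, up to a fixed positive constant, $\Theta_T(a^*a)=\bigl(i_{\o\otimes\mathrm{tr}_m}(Q)\,\big|\,T^{(m)}\,i_{\o\otimes\mathrm{tr}_m}(P)\bigr)$. For the implication ``$T$ completely positive $\Rightarrow\Theta_T\ge 0$'' the conclusion is now immediate: $P,Q\ge 0$ place $i_{\o\otimes\mathrm{tr}_m}(P),i_{\o\otimes\mathrm{tr}_m}(Q)$ in $L^2_+(\mathbb{M}_m(N))$, complete positivity keeps $T^{(m)}i_{\o\otimes\mathrm{tr}_m}(P)$ in the same cone, and self-polarity of the standard cone forces the pairing to be nonnegative; as every element of $N\odot N^\circ$ is of the above form, $\Theta_T(a^*a)\ge 0$ for all $a$.

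For the converse I would exploit that $\mathbf x$ and $\mathbf y$ are chosen \emph{independently}, so $\Theta_T\ge 0$ yields $\bigl(i_{\o\otimes\mathrm{tr}_m}(Q)\,\big|\,T^{(m)}i_{\o\otimes\mathrm{tr}_m}(P)\bigr)\ge 0$ for all \emph{rank-one Gram} positives $P=\mathbf x^*\mathbf x$, $Q=\mathbf y\mathbf y^*$ and every $m$. Writing an arbitrary positive element of $\mathbb{M}_n(N)$ as a finite sum of such rank-one Grams (factor $B^*B=\sum_\gamma \mathbf b_\gamma^*\mathbf b_\gamma$ over the rows of $B$) and using bilinearity of the pairing together with the linearity of $i_{\o\otimes\mathrm{tr}_n}$, I obtain $\bigl(i_{\o\otimes\mathrm{tr}_n}(Q')\,\big|\,T^{(n)}i_{\o\otimes\mathrm{tr}_n}(P')\bigr)\ge 0$ for \emph{all} $P',Q'\ge 0$. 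Recalling Araki's characterization of the standard cone as $\overline{i_{\o\otimes\mathrm{tr}_n}(\mathbb{M}_n(N)_+)}=\overline{\Delta^{1/4}\,\mathbb{M}_n(N)_+\,\xi}$, self-polarity then gives $T^{(n)}L^2_+(\mathbb{M}_n(N))\subseteq L^2_+(\mathbb{M}_n(N))$ for every $n$. Finally, realness of $T$ is not lost: a positive functional on the unital $*$-algebra $N\odot N^\circ$ is Hermitian, $\Theta_T(a^*)=\overline{\Theta_T(a)}$, and through the identity $J_\o i_\o(x^*)=i_\o(x)$ recorded just before this lemma, together with the antiunitarity of $J_\o$ and the density of the range of $i_\o$, this is equivalent to $TJ_\o=J_\o T$; hence $T$ is real and completely positive.

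The main obstacle is the bookkeeping in the identity $\Theta_T(a^*a)=\bigl(i_{\o\otimes\mathrm{tr}_m}(Q)\,\big|\,T^{(m)}i_{\o\otimes\mathrm{tr}_m}(P)\bigr)$: one must keep track of the index placement $P_{ji}=x_j^*x_i$, $Q_{ji}=y_jy_i^*$ and justify rigorously that both the symmetric embedding and the inner product at level $m$ factor entrywise, which rests on $\Delta_{\o\otimes\mathrm{tr}_m}=\Delta_\o\otimes 1$ and the identifications of the standard form of $\mathbb{M}_m(N)$. A secondary but essential point, used only in the converse, is that symmetric embeddings of positive matrices are dense in the standard cone, so that testing nonnegativity against them detects membership in $L^2_+(\mathbb{M}_n(N))$.
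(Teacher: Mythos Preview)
Your argument is correct and proceeds along the same lines as the paper: both compute $\Theta_T(\nu^*\nu)=\sum_{j,k}\bigl(i_\o(y_jy_k^*)\mid T\,i_\o(x_j^*x_k)\bigr)$ and interpret it through the Gram matrices $[x_j^*x_k],\,[y_jy_k^*]\in\mathbb{M}_m(N)_+$ together with the complete positivity of the symmetric embedding. Your treatment of the converse is considerably more explicit than the paper's (which only writes that ``the result then follows''): in particular your decomposition of arbitrary positives as sums of rank-one Grams, the density of $i_{\o\otimes\mathrm{tr}_n}(\mathbb{M}_n(N)_+)$ in the standard cone, and the deduction of realness $TJ_\o=J_\o T$ from the Hermitian symmetry of the positive functional are genuine additions that the paper leaves implicit.
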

\begin{proof}
i) The positive cone of $N\odot N^\circ$ is generated by elements of type $\nu^*\nu = \sum_{j,k=1}^n x_j^*x_k\otimes (y_k y_j^*)^\circ$ where $\nu =\sum_{k=1}^n x_k\otimes y_k^\circ \in N\odot N^\circ$. The result then follows by the identity
\[
\Theta_T (\nu^*\nu)=\sum_{j,k=1}^n \Theta_T (x_j^*x_k\otimes (y_k y_j^*)^\circ)=\sum_{j,k=1}^n (i_\o (y_j y_k^*)|T i_\o (x_j^*x_k))\, ,
\]
the completely positivity of the symmetric embedding $i_\o :N\rightarrow L^2(N,\tau)$ and the positivity of $[x_j^* x_k]_{j,k=1}^n$ and $[y_j y_k^*]_{j,k=1}^n$ in $\mathbb{M}_n (N)$.\par\noindent
\end{proof}
\begin{lem}
Let $T:L^2(N,\o)\rightarrow L^2(N,\o)$ be a completely positive operator and consider the positive linear functional $\Theta_T$ on $N\odot N^\circ$. Then, among the properties
\vskip0.2truecm\noindent
a) $\Theta_T$ is a state on $N\odot N^\circ$\par\noindent
b) $T$ is a contraction\par\noindent
c) $T\xi_\o =\xi_\o$
\vskip0.2truecm\noindent
we have that the following relations
\vskip0.2truecm\noindent
i) a) and b) imply c) and $\|T\|=1$\par\noindent
ii) c) implies a) and b).
\end{lem}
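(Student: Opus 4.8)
The hinge of both implications is the identity $i_\o(1_N)=\xi_\o$, which follows from $\Delta_\o^{\frac14}\xi_\o=\xi_\o$, together with the normalization $\|\xi_\o\|^2=\o(1_N)=1$. Evaluating the functional on the unit $1_N\otimes 1_N^\circ$ of $N\odot N^\circ$ therefore gives $\Theta_T(1_N\otimes 1_N^\circ)=(i_\o(1_N)|T\,i_\o(1_N))=(\xi_\o|T\xi_\o)$, so that \emph{$\Theta_T$ is a state exactly when $(\xi_\o|T\xi_\o)=1$}, positivity of $\Theta_T$ being automatic from complete positivity of $T$ by Lemma \ref{cp}. This single scalar $(\xi_\o|T\xi_\o)$ is what links all three properties, and it is the object I would organize the whole proof around.

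For part i) I would argue purely in the Hilbert space $L^2(N,\o)$. Assuming a) and b), the value $(\xi_\o|T\xi_\o)=1$ is real and $\|T\|\le 1$, so I would expand
\[
\|T\xi_\o-\xi_\o\|^2=\|T\xi_\o\|^2-2\,{\rm Re}\,(\xi_\o|T\xi_\o)+\|\xi_\o\|^2\le \|T\|^2-2+1\le 0 .
\]
Hence $T\xi_\o=\xi_\o$, which is c); and then $\|T\|\ge\|T\xi_\o\|/\|\xi_\o\|=1$ forces $\|T\|=1$. This half needs neither self-adjointness nor the order structure, only $\|\xi_\o\|=1$ and Cauchy--Schwarz packaged into the square expansion.

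For part ii), property a) is immediate: $\Theta_T$ is positive by Lemma \ref{cp}, and c) gives $\Theta_T(1_N\otimes 1_N^\circ)=(\xi_\o|\xi_\o)=1$, so $\Theta_T$ is a state. The substance is b), the contractivity $\|T\|\le 1$. Here I would first record that, by the Remark characterizing Markovianity, complete positivity of $T$ together with $T\xi_\o=\xi_\o$ (in particular $T\xi_\o\le\xi_\o$) makes $T$ a completely Markovian operator, so $T(C_\o)\subseteq C_\o$ and, being also positive, $T$ maps the order interval $[0,\xi_\o]$ into itself. To pass from this to a norm bound I would reduce to the cone in two steps: since $T$ is real I may restrict to $J_\o$-fixed vectors, and for such $\xi=\xi_+-\xi_-$ the self-duality of $L^2_+(N,\o)$ gives $(T\xi_+|T\xi_-)\ge 0$, whence $\|T\xi\|^2=\|T\xi_+-T\xi_-\|^2\le\|T\xi_++T\xi_-\|^2=\|T|\xi|\,\|^2$; this reduces the estimate to vectors $\eta\in L^2_+(N,\o)$.

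The remaining and genuinely hard point is the bound $\|T\eta\|\le\|\eta\|$ for $\eta\ge 0$. The naive estimate from $0\le T\eta\le\xi_\o$ and self-duality yields only $\|T\eta\|^2\le(\eta|\xi_\o)$, and since $(\eta|\xi_\o)\ge\|\eta\|^2$ this runs the wrong way, so a crude cone argument does not suffice; the correct route is interpolation. Using that $i_\o$ is an order isomorphism of $\{0\le x\le 1_N\}$ onto $[0,\xi_\o]$, the Markovian operator $T$ corresponds to a unital positive map at the level of $N$ (an $L^\infty$-contraction) and, by self-adjointness, to a state-preserving map at the level of the predual (an $L^1$-contraction); interpolation between these endpoints yields the $L^2$-contraction. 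This step --- equivalently the fact that self-adjoint completely Markovian operators are $L^2$-contractions, established in the foundational correspondence of [C1] --- is the main obstacle, and I expect it to be the only place where self-adjointness of $T$ is essential; indeed it is indispensable, since a non-self-adjoint completely positive $T$ can fix $\xi_\o$ and still satisfy $\|T\|>1$.
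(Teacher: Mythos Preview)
Your proof is correct and matches the paper's approach. For i) the paper writes the chain $1=(\xi_\o|T\xi_\o)\le\|\xi_\o\|\,\|T\xi_\o\|\le\|T\|\le 1$ and invokes equality in Cauchy--Schwarz, which is your square expansion rearranged; for ii) the paper says c)$\Rightarrow$a) is immediate and defers c)$\Rightarrow$b) entirely to [C1], so your reduction-to-the-cone sketch followed by the appeal to the [C1] interpolation machinery is already more than what the paper supplies, while landing on the same reference.

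Your closing remark that self-adjointness is genuinely needed for c)$\Rightarrow$b) is correct and worth flagging: for instance $(Tf)(x)=f(x/2)$ on $L^2[0,1]$ is completely positive, real, fixes the constant function $1$, yet has norm $\sqrt2$. The paper does not address this and states the lemma without a self-adjointness hypothesis, but it only ever applies the lemma to the self-adjoint semigroup operators $T_t=e^{-tL}$, so no harm is done downstream.
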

\begin{proof}
i) By a) and b) we have $1=\Theta_T (1_N\otimes 1_{N^\circ})=(\xi_\o |T\xi_\o)\le \|\xi_\o\|\cdot\|T\xi_\o\|\le \|\xi_\o\|^2 \cdot \|T\|=1$ that implies $\|T\|=\|T\xi_\o\|=1$ and $(\xi_\o |T\xi_\o)=\|\xi_\o\|\cdot\|T\xi_\o\|$ which provide $T\xi_\o =\xi_\o$. ii) The proof that c) implies a) is immediate while the proof that c) implies b) can be found in [C1].
\end{proof}

\subsection{Spectral growth rate}

In the following definition, the notion of growth rate of a finitely generated, countable discrete group is extended to $\sigma$-finite von Neumann algebras having the Haagerup Property (H), i.e. von Neumann algebras admitting Dirichlet forms with discrete spectrum. The idea for this generalization results from [CS5] (see discussion in Example 3.11 below).
\begin{defn}({\bf Spectral growth rate of Dirichlet forms}).
Let $(N,\o)$ be a $\sigma$-finite, von Neumann algebra with a fixed faithful, normal state on it. To avoid trivialities we assume $N$ to be {\it infinite dimensional} .\par\noindent
Let $(\E,\F)$ be a Dirichlet form on $L^2(N,\o)$ and let $(L,D(L))$ be the associated nonnegative, self-adjoint operator. Assume that its spectrum  $\sigma(L)=\{\lambda_k\ge 0: k\in\mathbb{N}\}$ is {\it discrete}, i.e. its points are isolated eigenvalues of finite multiplicity (repeated in non decreasing order according to their multiplicities).\par\noindent
Then let us set
\[
\Lambda_n :=\{k\in\mathbb{N}: \lambda_k\in [0,n]\}\, ,\qquad \beta_n:=\sharp (\Lambda_n)\, ,\qquad n\in\mathbb{N}
\]
and define the {\it spectral growth rate} of $(\E,\F)$ as
\[
\Omega (\E,\F):=\limsup_{n\in\mathbb{N}} \sqrt[n]{\beta_n}\, .
\]
The Dirichlet form $(\E,\F)$ is said to have
\vskip0.2truecm
\begin{itemize}
  \item {\it exponential growth} if $(\E,\F)$ has discrete spectrum and $\Omega (\E,\F)>1$
  \item {\it subexponential growth} if $(\E,\F)$ has discrete spectrum and $\Omega (\E,\F)= 1$
  \item {\it polynomial growth} if $(\E,\F)$ has discrete spectrum and, for some $c,d>0$,  $\beta_n\le c\cdot n^d$ for all $n\in\mathbb{N}$
  \item{\it intermediate growth} if it has subexponential growth but not polynomial growth.
\end{itemize}
\end{defn}
\begin{lem}
Setting $\gamma_0 =\beta_0$ and
\[
\gamma_n :=\beta_n -\beta_{n-1}=\sharp\{k\in\mathbb{N}: \lambda_k\in (n-1,n]\}\, ,\qquad  n\in\mathbb{N}^*\, ,
\]
and
\[
\Omega^\prime (\E,\F):=\limsup_{n\in\mathbb{N}^*} \sqrt[n]{\gamma_n}
\]
we have
\[
\Omega (\E,\F)=\Omega^\prime (\E,\F)\ge 1\, .
\]
\end{lem}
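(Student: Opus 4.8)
The plan is to reduce the statement to the elementary relation $\beta_n=\sum_{j=0}^{n}\gamma_j$ between a nondecreasing counting function and its increments, using the standing hypothesis that $N$ is infinite dimensional to pin down the lower bounds.

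First I would record the lower bound $\Omega(\E,\F)\ge 1$. Since $N$ is infinite dimensional, $L^2(N,\o)$ is infinite dimensional; as $(L,D(L))$ has discrete spectrum, its eigenvalues (repeated with multiplicity) form an infinite sequence tending to $+\infty$, so $\beta_n\to+\infty$. In particular $\beta_n\ge 1$ for all large $n$, whence $\sqrt[n]{\beta_n}\ge 1$ and $\limsup_{n}\sqrt[n]{\beta_n}\ge 1$. The same idea applied to the increments gives $\Omega'(\E,\F)\ge 1$: were only finitely many $\gamma_n$ nonzero, $\beta_n$ would be eventually constant, contradicting $\beta_n\to+\infty$; hence $\gamma_n\ge 1$ for infinitely many $n$, and along this subsequence $\sqrt[n]{\gamma_n}\ge 1$.

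One inequality is then immediate: since $\gamma_n=\beta_n-\beta_{n-1}\le\beta_n$, taking $n$-th roots and $\limsup$ yields $\Omega'\le\Omega$. The reverse inequality $\Omega\le\Omega'$ is where the (modest) work lies. If $\Omega'=+\infty$ there is nothing to prove, so assume $R:=\Omega'<\infty$; by the previous step $R\ge 1$. Fix $\e>0$. By definition of $\limsup$ there is $N_0$ with $\gamma_j\le (R+\e)^j$ for $j\ge N_0$; set $C:=\sum_{j=0}^{N_0-1}\gamma_j$. Then
\[
\beta_n=\sum_{j=0}^{n}\gamma_j\le C+\sum_{j=N_0}^{n}(R+\e)^j\le C+\frac{R+\e}{\,R+\e-1\,}(R+\e)^n,
\]
the geometric sum being controlled because $R+\e>1$. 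Extracting $n$-th roots and letting $n\to\infty$, the additive constant $C$ and the fixed prefactor become negligible, so $\limsup_{n}\sqrt[n]{\beta_n}\le R+\e$. As $\e>0$ is arbitrary, $\Omega\le R=\Omega'$. Combining with $\Omega'\le\Omega$ and the lower bounds gives $\Omega(\E,\F)=\Omega'(\E,\F)\ge 1$.

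I expect the only point needing care to be the passage from the geometric majorant to the equality of exponential rates, i.e.\ checking that additive constants and subexponential factors do not affect the $n$-th-root $\limsup$, together with the use of infinite dimensionality to force $\beta_n\to+\infty$, which is precisely what makes both rates at least $1$.
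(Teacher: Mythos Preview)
Your proof is correct. The approach differs from the paper's, though the underlying content is equivalent. You argue directly from the finite summation identity $\beta_n=\sum_{j=0}^{n}\gamma_j$, bounding the tail by a geometric sum once $\gamma_j\le (R+\e)^j$ and using $R\ge 1$ to ensure the geometric series is genuinely dominated by its last term. The paper instead passes to generating functions, writing $\sum_{n\ge 0}\beta_n z^n=(1-z)^{-1}\sum_{n\ge 0}\gamma_n z^n$ and comparing radii of convergence via Cauchy--Hadamard: since $(1-z)^{-1}$ is analytic on the open unit disk and $R'=1/\Omega'\le 1$, the right-hand side is analytic on $|z|<R'$, forcing $R\ge R'$, i.e.\ $\Omega\le\Omega'$. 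Your argument is more elementary and makes explicit why the inequality $\Omega'\ge 1$ is needed (it guarantees $R+\e>1$ so the geometric sum behaves); the paper's version hides this in the observation that the singularity of $(1-z)^{-1}$ at $z=1$ lies outside the disk $|z|<R'$. Both routes ultimately encode the same fact that summing a sequence does not increase its exponential growth rate.
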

\begin{proof}
On one hand, by definition, we have $\Omega(\E,\F)\ge \Omega^\prime(\E,\F)$. On the other hand, since, by assumption, $N$ is infinite dimensional and $\sigma(L)$ is discrete, we have $\Omega(\E,\F)\ge \Omega^\prime(\E,\F)\ge 1$. Consider now the following identity involving analytic functions in a neighborhood of $0\in\mathbb{C}$
\[
\sum_{n=0}^\infty\beta_n z^n =(1-z)^{-1}\sum_{n=0}^\infty \gamma_n z^n
\]
and notice that the radius of convergence of the series on the left-hand side is $R=1/\Omega (\E,\F)$, while the radius of convergence of the series on the right-hand side is $R^\prime=1/\Omega^\prime (\E,\F)$  so that $R\le R^\prime\le 1$.
Since $(1-z)^{-1}$ is analytic in the open unit disk centered in $z=0$, the above identity implies that $R\ge R^\prime$ so that $\Omega (\E,\F)\le\Omega^\prime (\E,\F)$.
\end{proof}

\begin{ex} (Spectral growth rate on countable discrete groups).
\item i) On a countable discrete group $\Gamma$, if there exists a proper, c.n.d. function $\ell$, then the associated Dirichlet form $(\E_\ell ,\F_\ell)$ has discrete spectrum $\sigma(L)=\{\ell(g)\in [0,+\infty):g\in\Gamma\}$.\par\noindent
\item ii) On a finitely generated, countable discrete group $\Gamma$, if the length $\ell_S$ corresponding to a finite system of generators $S\subseteq \Gamma$ is negative definite, then the spectral growth rate  $\Omega(\E_{\ell_S},\F_{\ell_S})$ of the corresponding Dirichlet form coincides with growth rate of $(\Gamma ,S)$ (see [deH Ch. VI]).\par\noindent\item iii) Moreover, if $(\Gamma,S)$ has polynomial growth, it has been shown in [CS5] that there exists on $\Gamma$ a proper, c.n.d. function function $\ell$ with polynomial growth. The associated Dirichlet form $(\E_\ell,\F_\ell)$ will have polynomial spectral growth rate.
\end{ex}
\begin{rem}
By a well known bound (see [R Theorem 3.37])
\[
1\le\liminf_n\frac{\beta_{n+1}}{\beta_n}\le\limsup_{n\in\mathbb{N}} \sqrt[n]{\beta_n}  \, ,
\]
if the spectral growth rate is subexponential, then $\liminf_n\frac{\beta_{n+1}}{\beta_n}=1$ so that there exists a subsequence of $\{\frac{\beta_{n+1}}{\beta_n}\}_{n\in\mathbb{N}}$ converging to 1. In other words, the sequence of spectral subspaces $\{E_n\}_{n\in\mathbb{N}}$ corresponding to the interval $[0,n]\subset [0,+\infty)$ admits a subsequence such that
\[
\lim_k\frac{{\rm dim\, }E_{n_k+1}\,}{{\rm dim\,}E_{n_k}}=1\, .
\]
\end{rem}
\noindent
Subexponential growth can be equivalently stated in terms of the nuclearity of the completely Markovian semigroup $\{e^{-tL}: t>0\}$ on $L^2 (N,\o)$:
\begin{lem}
The Dirichlet form $(\E,\F)$ has discrete spectrum and subexponential spectral growth if and only if the Markovian semigroup $\{e^{-tL}: t>0\}$ on $L^2 (N,\o)$ is {\it nuclear, or trace-class}, in the sense that:
\[
{\rm Trace\, }(e^{-tL})=\sum_{k\in\mathbb{N}}e^{-t\lambda_k}<+\infty\qquad t>0\, .
\]
\end{lem}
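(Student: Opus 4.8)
The plan is to prove both implications through a single analytic bridge. The previous lemma identifies the spectral growth rate with the reciprocal of the radius of convergence $R^\prime = 1/\Omega^\prime(\E,\F)$ of the power series $\sum_{n}\gamma_n z^n$, and I would match the Dirichlet series $\sum_k e^{-t\lambda_k}$ against this power series evaluated at $z=e^{-t}$. By that lemma the subexponential condition $\Omega(\E,\F)=1$ is equivalent to $\Omega^\prime(\E,\F)=1$, i.e. to $R^\prime=1$, i.e. to convergence of $\sum_n\gamma_n z^n$ on the whole open unit disk; convergence on $(0,1)$ is precisely what the nuclearity condition translates into, so the two notions should meet exactly at the value $\Omega=1$.

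The heart of the argument is a two-sided estimate obtained by grouping the eigenvalues according to the unit intervals $(n-1,n]$ used to define $\gamma_n$. For $t>0$ set $z=e^{-t}\in(0,1)$. Since $e^{-tn}\le e^{-t\lambda_k}\le e^{-t(n-1)}=e^{t}e^{-tn}$ whenever $\lambda_k\in(n-1,n]$ (and $e^{-t\lambda_k}=1$ when $\lambda_k=0$), summing over $k$ and then over the groups yields
\[
\sum_{n\ge 0}\gamma_n z^n\ \le\ \sum_{k}e^{-t\lambda_k}\ \le\ e^{t}\sum_{n\ge 0}\gamma_n z^n\, .
\]
Because $e^{t}<+\infty$, the middle term is finite exactly when $\sum_n\gamma_n z^n<+\infty$. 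Hence the requirement ``$\sum_k e^{-t\lambda_k}<+\infty$ for every $t>0$'' is equivalent to ``$\sum_n\gamma_n z^n<+\infty$ for every $z\in(0,1)$'', that is, to $R^\prime\ge 1$, that is, to $\Omega^\prime(\E,\F)\le 1$. Combined with the inequality $\Omega(\E,\F)=\Omega^\prime(\E,\F)\ge 1$ from the previous lemma, this forces $\Omega(\E,\F)=1$; and conversely $\Omega(\E,\F)=1$ gives $R^\prime=1$ and hence finiteness of the Dirichlet series for all $t>0$. This settles the equivalence once the spectrum is known to be discrete.

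It remains to justify discreteness in the direction ``nuclear $\Rightarrow$ discrete spectrum with subexponential growth'', since the formula ${\rm Trace}(e^{-tL})=\sum_k e^{-t\lambda_k}$ presupposes a pure point spectrum. Here I would argue that if ${\rm Trace}(e^{-t_0L})<+\infty$ for a single $t_0>0$, then the positive operator $e^{-t_0L}$ is trace-class, in particular compact; by the spectral theorem and the fact that $x\mapsto e^{-t_0 x}$ is a continuous strictly decreasing injection of $[0,+\infty)$ onto $(0,1]$, compactness of $e^{-t_0L}$ transfers to $L$ the property that each spectral value is an isolated eigenvalue of finite multiplicity, the eigenvalues accumulating only at $+\infty$; that is, $\sigma(L)$ is discrete. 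Once discreteness holds, ${\rm Trace}(e^{-tL})=\sum_k e^{-t\lambda_k}$ for every $t>0$ and the preceding paragraph applies verbatim. I expect this passage from trace-finiteness to discreteness of $\sigma(L)$, rather than the elementary power-series comparison, to be the only genuinely delicate point; the remainder is the bookkeeping of the two-sided estimate together with the already established identity $\Omega=\Omega^\prime$.
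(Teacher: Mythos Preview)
Your proof is correct and follows essentially the same approach as the paper: both hinge on the two-sided estimate comparing $\sum_k e^{-t\lambda_k}$ with the power series $\sum_n \gamma_n e^{-tn}$ and then invoke the identification $\Omega=\Omega'$ from the preceding lemma. You are in fact slightly more thorough than the paper, which does not spell out the passage from nuclearity of $e^{-t_0 L}$ to discreteness of $\sigma(L)$ that you correctly supply.
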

\begin{proof}
Since
\[
\gamma_0 +\sum_{n\in\mathbb{N}^*}\gamma_n e^{-tn}\le\sum_{k\in\mathbb{N}}e^{-t\lambda_k}\le\gamma_0 +e^t \sum_{n\in\mathbb{N}^*}\gamma_n e^{-tn}\qquad t>0\, ,
\]
the series $\sum_{k\in\mathbb{N}}e^{-t\lambda_k}$ and $\sum_{n\in\mathbb{N}^*}\gamma_n e^{-tn}$ converge or diverge simultaneously. They obviously converge
 for all $t>0$ if and only if $\Omega' (\E,\F)\le 1$.
\end{proof}

\begin{ex}
If on a countable discrete group $\Gamma$, there exists a c.n.d. function $\ell$, such that $\sum_{g\in\Gamma}e^{-t\ell (g)}<+\infty$ for all $t>0$, then $\ell$ is proper, the spectrum of the associated Dirichlet form $(\E_\ell ,\F_\ell)$ coincides with $\{\ell(g)\in [0,+\infty):g\in\Gamma\}$ and it is thus discrete with subexponential growth.
\end{ex}

The following is the main result of this section.

\begin{thm}
Let $(N,\o)$ be a $\sigma$-finite von Neumann algebra endowed with a normal, faithful state on it. If there exists a Dirichlet form $(\E,\F)$ on $L^2(N,\o)$ having subexponential spectral growth, then $N$ is amenable.
\end{thm}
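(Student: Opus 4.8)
The plan is to deduce amenability from the characterization recalled in the Introduction: $N$ is amenable if and only if the identity correspondence $L^2(N,\o)$ is weakly contained in the coarse correspondence $L^2(N,\o)\otimes L^2(N,\o)\cong{\rm HS}(L^2(N,\o))$ (see [Po1]). In the language of the functionals $\Theta_T$ of Lemma \ref{cp}, the state on $N\odot N^\circ$ attached to the identity correspondence is $\Theta_{\rm Id}(x\otimes y^\circ)=(i_\o(y^*)|i_\o(x))$, while the coefficients of the coarse correspondence are exactly the vector functionals of the standard representation $\pi_{\rm co}$ on ${\rm HS}(L^2(N,\o))$. So it suffices to exhibit a net of such coarse coefficients converging pointwise on $N\odot N^\circ$ to $\Theta_{\rm Id}$.

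First I would use the Lemma preceding the theorem: subexponential spectral growth is equivalent to ${\rm Trace}(e^{-tL})=\sum_k e^{-t\lambda_k}<+\infty$ for every $t>0$, i.e. each $T_t:=e^{-tL}$ is trace class, hence Hilbert--Schmidt. The candidate approximating functionals are the $\Theta_{T_t}$. Since the semigroup is completely Markovian, each $T_t$ is completely positive, so by Lemma \ref{cp} each $\Theta_{T_t}$ is a positive functional on $N\odot N^\circ$; and since $\{T_t\}$ is strongly continuous with $T_t\to{\rm Id}$ as $t\downarrow 0$, one gets $\Theta_{T_t}(x\otimes y^\circ)=(i_\o(y^*)|T_t i_\o(x))\to(i_\o(y^*)|i_\o(x))=\Theta_{\rm Id}(x\otimes y^\circ)$ for every $x\otimes y^\circ$. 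The pointwise approximation is therefore automatic; the whole content is to show that each $\Theta_{T_t}$ is genuinely a coefficient of the coarse correspondence.

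The key step --- and the one I expect to be the main obstacle --- is to prove that $\Theta_{T_t}$ is \emph{normal} on $N\overline{\otimes}N^\circ$. Diagonalising $T_t=\sum_k e^{-t\lambda_k}|e_k\rangle\langle e_k|$ in an orthonormal basis of eigenvectors of $L$, one has
\[
\Theta_{T_t}(x\otimes y^\circ)=\sum_k e^{-t\lambda_k}\,\overline{(e_k|i_\o(y^*))}\,(e_k|i_\o(x))=\sum_k e^{-t\lambda_k}\, g_k(x)\, f_k(y^\circ),
\]
where $g_k(x):=(e_k|i_\o(x))$ and $f_k(y^\circ):=(i_\o(y^*)|e_k)$. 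Because $i_\o$ is continuous from the weak$^*$-topology of $N$ to the weak topology of $L^2(N,\o)$, each $g_k\in N_*$ and each $f_k\in(N^\circ)_*$ is normal, with $\|g_k\|,\|f_k\|\le\|i_\o\|\le 1$. Hence each $g_k\otimes f_k$ extends to a normal functional on the spatial tensor product $N\overline{\otimes}N^\circ$ of norm $\le 1$, and the bound $\sum_k e^{-t\lambda_k}\|g_k\otimes f_k\|\le\sum_k e^{-t\lambda_k}<+\infty$ shows that the series converges in the predual $(N\overline{\otimes}N^\circ)_*$. Thus $\Theta_{T_t}$ extends to a normal positive functional on $N\overline{\otimes}N^\circ$; it is precisely here that the trace-class (subexponential growth) hypothesis is indispensable, since for $T={\rm Id}$ the analogous series has all coefficients equal to $1$ and fails to converge, reflecting the fact that the identity correspondence is only \emph{weakly} contained in the coarse one.

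To conclude, I would invoke the Lemma identifying $\pi_{\rm co}$ with the standard representation of $N\overline{\otimes}N^\circ$: every normal positive functional of a von Neumann algebra in standard form is the vector functional of a vector in the standard positive cone, here ${\rm HS}(L^2(N,\o))_+$. Consequently each $\Theta_{T_t}$ is a genuine coefficient of the coarse correspondence. After normalising (note $\Theta_{T_t}(1\otimes 1^\circ)=(\xi_\o|T_t\xi_\o)\to 1$), the net $\{\Theta_{T_t}\}_{t\downarrow 0}$ of coarse coefficients converges pointwise to the state $\Theta_{\rm Id}$ of the identity correspondence. This is exactly the weak containment $L^2(N,\o)\prec L^2(N,\o)\otimes L^2(N,\o)$, whence $N$ is amenable.
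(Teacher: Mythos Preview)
Your proof is correct and follows essentially the same route as the paper: define $\Phi_t(x\otimes y^\circ)=(i_\o(y^*)\mid T_t\,i_\o(x))$, use Lemma \ref{cp} for positivity, exploit nuclearity of $T_t$ to show $\Phi_t$ extends normally to $N\overline{\otimes}N^\circ$, represent it as a vector state in the standard (coarse) representation, and let $t\downarrow 0$. The only cosmetic difference is in how normality is established: the paper diagonalises with $J_\o$-real eigenvectors $\xi_k$, forms the $L^2$-convergent vector $Z_t=\sum_k e^{-t\lambda_k}\xi_k\otimes\xi_k$ and writes $\Phi_t(z)=(Z_t\mid i_{\o\otimes\o^\circ}(z))$, whereas you sum the elementary tensors $g_k\otimes f_k$ directly in the predual $(N\overline{\otimes}N^\circ)_*$; both arguments rest on the same summability $\sum_k e^{-t\lambda_k}<\infty$.
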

\begin{proof}
Recall that $N$ is amenable if and only if the identity or standard bimodule ${}_NL^2 (N)_N$ is weakly contained in the coarse or Hilbert-Schmidt bimodule $\H_{\rm co}$ (see [Po1]). Consider the completely positive semigroup $\{T_t:=e^{-tL}:t>0\}$ and assume, for simplicity, that the cyclic vector is invariant: $T_t\xi_\o =\xi_\o$ for all $t>0$. Recall (cf. Lemma \ref{cp}) that the complete positivity of $T_t$ provides a binormal state on $N\otimes_{\rm max} N^\circ$ characterized by
\[
\Phi_t :N\otimes_{\rm max} N^\circ\rightarrow\mathbb{C}\qquad \Phi_t(x\otimes y^\circ):=(i_\o (y^*)|T_t i_\o (x))\, .
\]
To compute this state, we consider the spectral representation $T_t =\sum_{k\ge 0}e^{-t\lambda_k} P_k$ (converging strongly) in terms of the rank-one projections $P_k$ on $L^2(N,\o)$ associated to each eigenvalue $\lambda_k$ (repeated according to their multiplicity).
Notice that by Markovianity, the semigroup commutes with the modular conjugation $J_\o$ so that each eigenvector $\xi_k$ may be assumed to be real: $\xi_k =J_\o\xi_k$. We then have
\[
\begin{split}
\Phi_t(x\otimes y^\circ)&=(i_\o (y^*)|T_t i_\o (x)) \\
&=\sum_{k=0}^\infty e^{-t\lambda_k} (i_\o (y^*)|P_k(i_\o (x))) \\
&=\sum_{k=0}^\infty e^{-t\lambda_k} (i_\o (y^*)|(\xi_k|i_\o (x))\xi_k)\\
&=\sum_{k=0}^\infty e^{-t\lambda_k} (\xi_k|i_\o (x))(i_\o (y^*)|\xi_k)\, .
\end{split}
\]
As the series $Z_t :=\sum_{k=0}^\infty e^{-t\lambda_k} \xi_k\otimes \xi_k$ is norm convergent for all $t>0$ by the nuclearity of the semigroup, since $J_\o{}$ is an antiunitary operator on $L^2(N)$, using properties in item  3.4 above we have
\[
\begin{split}
\Phi_t(x\otimes y^\circ)&=\sum_{k=0}^\infty e^{-t\lambda_k} (\xi_k|i_\o (x))(J_\o{}\xi_k|J_\o{} i_\o (y^*))\\
&=\sum_{k=0}^\infty e^{-t\lambda_k} (\xi_k|i_\o (x))(\xi_k|i_{\o^\circ} (y^\circ))\\
&=\sum_{k=0}^\infty e^{-t\lambda_k} (\xi_k\otimes \xi_k|i_\o (x)\otimes i_{\o^\circ} (y^\circ))_{L^2(N,\o)\otimes L^2(N,\o)}\\
&=\Bigl(\sum_{k=0}^\infty e^{-t\lambda_k} \xi_k\otimes \xi_k\Bigl|i_\o (x)\otimes i_{\o^\circ} (y^\circ)\Bigr)_{L^2(N,\o)\otimes L^2(N,\o)}\\
&=\Bigl(Z_t\Bigl|i_{\o\otimes \o^\circ} (x\otimes y^\circ)\Bigr)_{L^2(N\overline{\otimes} N^\circ,\o\otimes\o^\circ)}\, .
\end{split}
\]
Since the symmetric embeddings of von Neumann algebras are continuous when $N\overline{\otimes} N^\circ$ is endowed with the weak$^*$-topology and $L^2(N\overline{\otimes} N^\circ,\o\otimes\o^\circ)$ is endowed with the weak topology, by continuity we have
\[
\Phi_t(z)=\Bigl(Z_t\Bigl|i_{\o\otimes \o^\circ} (z)\Bigr)_{L^2(N\overline{\otimes} N^\circ,\o\otimes\o^\circ)}\qquad z\in N\overline{\otimes} N^\circ\, .
\]
In other words, the linear functional $\Phi_t$ extends as a $\sigma$-weakly continuous linear functional on the spatial tensor product $N\overline \otimes N^\circ$.
 $\Phi_t$ being positive by Lemma 3.7,  there exist a unique positive element $\Omega_t\in L^2_+(N\overline{\otimes} N^\circ,\o\otimes\o^\circ)$ (see [Haa1]) such that
\[
\Phi_t(z)=(i_\o (y^*)|T_t i_\o (x))=\Bigl(\Omega_t |\pi_{\rm co}(z)\Omega_t\Bigr)_{L^2(N\overline{\otimes} N^\circ,\o\otimes\o^\circ)}\qquad z\in N\overline{\otimes} N^\circ
\]
 and the GNS representation of $N\otimes_{\rm max}N^\circ$ associated to $\Phi_t$ coincides with a sub-representation of $\pi_{\rm co}$. In other words, the $N-N$-correspondence $\H_t$ associated to the completely positive map $T_t$ is contained in the coarse $N-N$-correspondence $\H_{\rm co}$ for all $t>0$. Since the semigroup $\{T_t:t>0\}$ is strongly continuous on $L^2(N,\o)$, for all $x\otimes y^\circ\in N\otimes_{\rm max}N^\circ$ we have
\[
\begin{split}
\lim_{t\downarrow 0} \Bigl(\Omega_t |\pi_{\rm co}(x\otimes y^\circ)\Omega_t\Bigr)_{L^2(N,\o)\otimes L^2(N,\o)}&= (i_\o (y^*)|i_\o (x))_{L^2(N,\o)} \\
&=(\Delta_\o^{\frac{1}{4}}y^*\xi_\o |\Delta_\o^{\frac{1}{4}}x\xi_\o ) \\
&=(\Delta_\o^{\frac{1}{2}}y^*\xi_\o |x\xi_\o ) \\
&=(J_\o{} y\xi_\o|x\xi_\o) \\
&=(J_\o{} yJ_\o{}\xi_\o|x\xi_\o) \\
&=(\xi_\o|J_\o{} y^*J_\o{}x\xi_\o) \\
&=(\xi_\o|xJ_\o{} y^*J_\o{}\xi_\o) \\
&=(\xi_\o |x\xi_\o y) \\
&=(\xi_\o |\pi_{\rm id}(x\otimes y^\circ)\xi_\o)
\end{split}
\]
and by continuity
\[
\lim_{t\downarrow 0} \Bigl(\Omega_t |\pi_{\rm co}(z)\Omega_t\Bigr)_{L^2(N,\o)\otimes L^2(N,\o)}=(\xi_\o |\pi_{\rm id}(z)\xi_\o)\qquad z\in N\otimes_{\rm max}N^\circ\, .
\]
This proves that the identity correspondence $\H_{\rm id}$ is weakly contained in the coarse correspondence $\H_{\rm co}$ and thus $N$ is amenable at least if the semigroup leaves the cyclic vector invariant.
To deal with the general case, remark first that, by strong continuity, we have that \\$\lim_{t\downarrow 0}(\xi_\o|T_t\xi_\o)=\|\xi_\o\|^2=1$ and there exist $t_0 >0$ such that $(\xi_\o|T_t\xi_\o)>0$ for all $0<t<t_0$. Applying the argument above to the binormal states
\[
\widetilde\Phi_t (x\otimes y^\circ):=\frac{1}{(\xi_\o|T_t\xi_\o)}(i_\o (y^*)|T_t i_\o (x))\qquad x\otimes y^\circ\in N\otimes_{\rm max}N^\circ\, ,\qquad 0<t<t_0
\]
we get the amenability of $N$ even in the general situation.
\end{proof}

\begin{rem}
i) The above result implies that if the von Neumann algebra $N$ is not amenable, then any Dirichlet form $(\E,\F)$ with respect to any normal, faithful state $\o$ has exponential growth rate $\Omega(\E,\F)>1$, i.e. its sequence of eigenvalues has exponentially growing distribution.
ii) Conversely, it is an open question whether there exist amenable von Neumann algebras on which every Dirichlet form has exponential growth. The analogy with discrete groups suggests that the the answer is likely positive.

\end{rem}

The following one is a generalization of a result of Guentner-Kaminker [GK].
\begin{cor}
Let $\Gamma$ be a countable discrete group, $\lambda:\Gamma\rightarrow \mathcal{B}(l^2(\Gamma))$ be its left regular representation, $L(\Gamma)$ its associated von Neumann algebra and $\tau$ its trace state. If there exists a Dirichlet form $(\E,\F)$ on $L^2(L(\Gamma),\tau)$ having subexponential spectral growth, then the group $\Gamma$ is amenable.
\end{cor}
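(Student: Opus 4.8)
The plan is to obtain this statement as an immediate consequence of the main theorem of this section (Theorem~3.15), the only extra ingredient being the classical dictionary between amenability of a discrete group and amenability (equivalently injectivity, semi-discreteness or hyperfiniteness) of its group von Neumann algebra, already recalled among the equivalent formulations at the opening of Section~3.

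First I would verify that the pair $(L(\Gamma),\tau)$ satisfies the hypotheses of Theorem~3.15. Since $\Gamma$ is countable, $L(\Gamma)$ acts on the separable Hilbert space $\ell^2(\Gamma)$ and is therefore $\sigma$-finite; the canonical trace $\tau$ is a faithful, normal state, and the standard space is identified with $L^2(L(\Gamma),\tau)\simeq \ell^2(\Gamma)$. If $\Gamma$ is finite there is nothing to prove, so we may assume $\Gamma$ infinite, whence $L(\Gamma)$ is infinite dimensional as required in the definition of spectral growth rate. Thus the existence of a Dirichlet form $(\E,\F)$ on $L^2(L(\Gamma),\tau)$ with subexponential spectral growth is precisely the hypothesis of Theorem~3.15, applied to $N=L(\Gamma)$, and we conclude that $L(\Gamma)$ is an amenable von Neumann algebra.

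It then remains to descend from amenability of the algebra to amenability of the group. Here I invoke the well-known equivalence, for a discrete group $\Gamma$, between amenability of $\Gamma$ and amenability of $L(\Gamma)$; we only need the implication that amenability of the von Neumann algebra $L(\Gamma)$ forces amenability of $\Gamma$. For $\Gamma$ with infinite conjugacy classes this is the Murray--von Neumann/Connes identification of an amenable $L(\Gamma)$ with the hyperfinite $\mathrm{II}_1$ factor, and the general case reduces to the standard theorem. Hence $\Gamma$ is amenable. Since no argument beyond Theorem~3.15 is involved, there is no genuine obstacle; the only point requiring care is to make explicit that the group-theoretic conclusion is reached through this classical correspondence, as Theorem~3.15 by itself only delivers amenability at the level of the algebra.

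Finally, it is worth pointing out why this is a generalization of the Guentner--Kaminker result [GK]. When the Dirichlet form is of the special form $\E_\ell$ associated with a conditionally negative definite function $\ell$ on $\Gamma$, Example~3.14 shows that the summability condition $\sum_{g\in\Gamma}e^{-t\ell(g)}<+\infty$ for all $t>0$, which is exactly the hypothesis of [GK], implies that $\ell$ is proper and that $(\E_\ell,\F_\ell)$ has discrete spectrum with subexponential growth. The present corollary therefore recovers their conclusion while allowing an arbitrary Dirichlet form on $L^2(L(\Gamma),\tau)$, not merely one coming from a conditionally negative definite function.
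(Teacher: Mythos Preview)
Your argument is correct and follows exactly the paper's own proof: apply Theorem~3.15 to obtain amenability of $L(\Gamma)$, then invoke Connes' result that amenability of the group von Neumann algebra of a discrete group implies amenability of the group. The additional remarks on verifying the hypotheses and on the relation to [GK] are accurate but not part of the paper's (two-line) proof.
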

\begin{proof}
Under the assumptions, the group von Neumann algebra $L(\Gamma)$ is amenable by the above theorem. Hence by a well known result of A. Connes, the group $\Gamma$ is amenable.
\end{proof}

\begin{ex}(Free orthogonal quantum groups)
On the von Neumann algebra $L^\infty (O^+_2,\tau)$ of the {\it free orthogonal quantum group} $O^+_2$ with respect to its Haar state $\tau$, it has been constructed in [CFK] a Dirichlet form with an explicitly computed discrete spectrum of polynomial growth (and spectral dimension $d:=\limsup_n\frac{\ln \beta_n}{\ln n}=3$). Applying the theorem above one obtains a proof of the amenability of $L^\infty (O^+_2,\tau)$, a result which has been proved by M. Brannan [Bra].
\end{ex}

\section{Relative amenabiltity of inclusions of finite von Neumann algebras}
In this section we extend the previous result to the {\it relative amenability} of inclusions of finite von Neumann algebras $B\subseteq N$, as defined by S. Popa [Po 1,2]. This extension is based on the properties of the relative tensor product of Hilbert bimodules and on the properties of the {\it basic construction}, which we will presently recall
(see [Chr], [GHJ], [J], [SiSm]).
\subsection{Basic construction of finite inclusions}
Let $N$ be a von Neumann algebra admitting a normal faithful trace state $\tau$ and $1_N\in B\subseteq N$ a von Neumann subalgebra with the same identity (see [Chr], [J1], [Po1], [PiPo], [SiSm]).
\vskip0.2truecm\noindent
Recall that the relative tensor product $L^2(N,\tau)\otimes_B L^2(N,\tau)$ over $B$ of the $N$-$B$-bimodule ${}_N L^2(N,\tau)_B$ by the $B$-$N$-bimodule ${}_B L^2(N,\tau)_N$, constructed in [S1], is isomorphic, as an $N-N$-bimodule, to the $N$-$N$-correspondence $\H_B$ associated to the conditional expectation $E_B:N\to N$ from $N$ onto $B$. The latter being generated by the GNS construction applied to the binormal state
\[
\Phi_B :N\otimes_{\rm max} N^\circ\rightarrow\mathbb{C}\qquad \Phi_B (x\otimes y^\circ):=\tau(E_B(x)y)\, .
\]

According to S. Popa (see [Po1,2]), the inclusion $B\subseteq N$ is said to be {\it relatively amenable} if the standard bimodule ${}_N L^2(N,\tau)_N$ is weakly included in the relative coarse bimodule ${}_N L^2(N,\tau)\otimes_B L^2(N;\tau)_N$.

Let $e_B$ be the oerthogonal projection in $\mathcal{B}(L^2(N,\tau))$ from $L^2(N,\tau)$ onto $L^2(B,\tau)$ and consider the {\it basic construction} $\langle N,B\rangle$, i.e. the von Neumann algebra in $\mathcal{B}(L^2(N,\tau))$ generated by $N$ and the projection $e_B$.
For example, if $B=\mathbb{C}1_N$ then $\langle N,B\rangle=\mathcal{B}(L^2(N,\tau))$ and when $B=N$ then $\langle N,B\rangle=N$.
\par\noindent
Denoting by $\xi_\tau\in L^2(N,\tau)$ the cyclic vector representing $\tau$ one has \[
e_B(x\xi_\tau)=E_B(x)\xi_\tau\, ,\quad e_B xe_B =E_B (x)e_B\qquad x\in N\, .
\]
It can be shown that an element $x\in N$ commutes with the projection  $e_B$ if and only if $x\in B$.
Moreover, ${\rm span}(Ne_B N)$ is weakly$^*$-dense in $\langle N,B\rangle$ and $e_B \langle N,B\rangle e_B=Be_B$. It can be shown that
\[
\langle N,B\rangle=(JBJ)^\prime\subseteq \mathcal{B}(L^2(N,\tau))
\]
so that $\langle N,B\rangle$ is semifinite since $B$ is finite. In particular, there exists a unique normal, semifinite faithful trace $\rm Tr$ characterized by
\[
{\rm Tr}(xe_B y)=\tau(xy)\qquad x,y\in N\, .
\]
and there exists also a unique $N-N$-bimodule map $\Phi$ from ${\rm span}(Ne_B N)$ into $N$ satisfying
\[
\Phi (xe_B y)=xy\quad x,y\in N\, ,\qquad {\rm Tr}=\tau\circ\Phi\, .
\]
The map $\Phi$ extends to a contraction between the $N$-$N$-bimodules $L^1(\langle N,B\rangle,{\rm Tr})$ and $L^1(N,\tau)$ and satisfies
\[
e_B X=e_B \Phi (e_B X)\qquad X\in \langle N,B\rangle\, .
\]
Moreover, $\Phi(e_B X)\in L^2(\langle N,B\rangle,{\rm Tr})$ for all $X\in \langle N,B\rangle$. These properties enable us to prove that the identity correspondence $L^2(\langle N,B\rangle, {\rm Tr})$ of the algebra $\langle N,B\rangle$ reduces to the relative correspondence $\H_B$ when restricted to the subalgebra $N\subseteq \langle N,B\rangle$.
\vskip0.2truecm\noindent
The following proposition is well known, we give the proof for sake of completeness.
\begin{prop}
The $N$-$N$-correspondences $\H_B$ and $L^2(\langle N,B\rangle, {\rm Tr})$ are isomorphic. In particular, the binormal state is given by
\[
\Phi_B(x\otimes y^\circ)=(e_B|xe_B y)_{L^2(\langle N,B\rangle, {\rm Tr})}\qquad x,y\in N
\]
so that the cyclic vector representing the state $\Phi_B$ is $e_B\in L^2(\langle N,B\rangle, {\rm Tr})$.
\end{prop}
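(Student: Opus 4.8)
The plan is to identify $L^2(\langle N,B\rangle,{\rm Tr})$, equipped with the $N$-$N$-bimodule structure inherited from the inclusion $N\subseteq\langle N,B\rangle$ (so that $x$ acts by left multiplication and $y^\circ$ by right multiplication), as the GNS correspondence of the binormal state $\Phi_B$. Since $\H_B$ is by construction the $N$-$N$-correspondence generated by the GNS representation of $\Phi_B$, it suffices, by uniqueness of the GNS construction, to exhibit inside $L^2(\langle N,B\rangle,{\rm Tr})$ a vector that is cyclic for the $N$-$N$-action and whose associated binormal state is precisely $\Phi_B$. The natural candidate is $e_B$ itself, which lies in $L^2(\langle N,B\rangle,{\rm Tr})$ because $\|e_B\|_2^2={\rm Tr}(e_B)={\rm Tr}(1_N e_B 1_N)=\tau(1_N)=1$.

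First I would verify that $e_B$ reproduces $\Phi_B$. Using the inner product $(a|b)={\rm Tr}(a^*b)$ on $L^2(\langle N,B\rangle,{\rm Tr})$, together with the relations $e_B x e_B=E_B(x)e_B$ and ${\rm Tr}(ae_Bb)=\tau(ab)$ for $a,b\in N$, one computes
\[
(e_B|xe_By)_{L^2(\langle N,B\rangle,{\rm Tr})}={\rm Tr}(e_B x e_B y)={\rm Tr}(E_B(x)e_By)=\tau(E_B(x)y)=\Phi_B(x\otimes y^\circ)\, .
\]
The same computation with general vectors gives $(xe_By|x'e_By')=\tau(y^*E_B(x^*x')y')$, which is exactly the inner product of $\hat x\otimes_B\hat y$ and $\hat{x'}\otimes_B\hat{y'}$ in $L^2(N,\tau)\otimes_B L^2(N,\tau)\cong\H_B$; moreover the intertwining of the $N$-$N$-actions is immediate from $x'(xe_By)y'=(x'x)e_B(yy')$. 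Hence the map $V$ sending the GNS cyclic vector $\xi_{\Phi_B}$ to $e_B$ and $x\cdot\xi_{\Phi_B}\cdot y^\circ$ to $xe_By$ extends to an $N$-$N$-bimodule isometry of $\H_B$ onto $\overline{{\rm span}}\{xe_By:x,y\in N\}$.

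The main obstacle is to prove that this range is all of $L^2(\langle N,B\rangle,{\rm Tr})$, i.e. that $e_B$ is cyclic for the $N$-$N$-action. Here I would argue on the orthogonal complement: suppose $\xi\in L^2(\langle N,B\rangle,{\rm Tr})$ satisfies ${\rm Tr}((xe_By)^*\xi)=0$ for all $x,y\in N$. Rewriting this as ${\rm Tr}\big(e_B(x^*\xi y^*)\big)=0$ and using ${\rm Tr}=\tau\circ\Phi$ together with the $N$-$N$-bimodularity of $\Phi$, one gets $\tau\big(\Phi(e_Bx^*\xi)\,y^*\big)=0$ for all $y\in N$; since $\Phi(e_Bx^*\xi)\in L^2(N,\tau)$ and $\tau$ is faithful, this forces $\Phi(e_Bx^*\xi)=0$, whence $e_B x^*\xi=e_B\Phi(e_Bx^*\xi)=0$ for every $x\in N$ by the factorization property $e_BX=e_B\Phi(e_BX)$. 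Consequently $(xe_Bz)\xi=x(e_Bz\xi)=0$ for all $x,z\in N$, so $\xi$ is annihilated by the weak$^*$-dense subalgebra ${\rm span}(Ne_BN)$; choosing, via Kaplansky density, a bounded net in ${\rm span}(Ne_BN)$ converging strongly to $1$, we conclude $\xi=0$. The delicate points to be checked are the domain extensions of $\Phi$ to the $L^1$/$L^2$ elements $e_Bx^*\xi y^*$ (legitimate by H\"older, since $e_B$ has finite trace) and the passage from weak$^*$-density to a bounded strong approximation of $1$.

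Finally, combining the isometry $V$ with the density just established shows that $V$ is an $N$-$N$-bimodule unitary from $\H_B$ onto $L^2(\langle N,B\rangle,{\rm Tr})$. Transporting the cyclic vector, $e_B=V\xi_{\Phi_B}$ is the cyclic vector of $L^2(\langle N,B\rangle,{\rm Tr})$ representing $\Phi_B$, and the displayed identity $\Phi_B(x\otimes y^\circ)=(e_B|xe_By)_{L^2(\langle N,B\rangle,{\rm Tr})}$ is precisely the state computation carried out above.
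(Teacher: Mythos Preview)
Your proof is correct and follows essentially the same strategy as the paper's: both construct the $N$-$N$-bimodule map $[x\otimes y^\circ]_{\H_B}\mapsto xe_By$ with cyclic vector $e_B$, verify it is isometric via the computation $(e_B|xe_By)={\rm Tr}(E_B(x)e_By)=\tau(E_B(x)y)=\Phi_B(x\otimes y^\circ)$, and then establish surjectivity by density of ${\rm span}(Ne_BN)$ in $L^2(\langle N,B\rangle,{\rm Tr})$. The only difference lies in the density step: the paper simply invokes that weak$^*$-density of ${\rm span}(Ne_BN)$ in $\langle N,B\rangle$ implies $L^2$-density (and then the isometric image, being closed, is everything), whereas you give a more hands-on orthogonal-complement argument using the operator-valued weight $\Phi$ and the identity $e_BX=e_B\Phi(e_BX)$---a route that is more explicit but, as you rightly flag, requires extending that identity from $X\in\langle N,B\rangle$ to $L^2$-elements.
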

\begin{proof}
Let us consider the map $\Psi$ defined on the domain
\[
D(\Psi):={\rm span}\{[x\otimes y^\circ]_{\H_B}: x,y\in N\}
\]
by
\[
\Psi:D(\Psi)\rightarrow L^2(\langle N,B\rangle, {\rm Tr})\qquad \Psi([x\otimes y^\circ]_{\H_B}):= xe_B y\qquad x,y\in N\,.
\]
 Here $[x\otimes y^\circ]_{\H_B}$ denotes the element of $\H_B$ image of the elementary tensor product $x\otimes y^\circ$, in the GNS construction of the state $\Phi_B$. The map is well defined because $\|e_B\|^2 ={\rm Tr}(e_B)=\tau (\Phi (e_B))=\tau (1_N)=1$ and $\|xe_B y\|_2\le \|x\|\cdot\|y\|\cdot \|e_B\|_2 =\|x\|\cdot\|y\|$. By the definition of the Hilbert space $\H_B$, the map $\Psi$ is densely defined. For $x,y\in N$ we have
\[
\begin{split}
\|\Psi([x\otimes y^\circ]_{\H_B})\|^2_2 &=\|xe_B y\|^2_2 \\
&={\rm Tr}((xe_B y)^*(xe_B y)) \\
&={\rm Tr}(y^*e_B x^*xe_B y) \\
&={\rm Tr}(y^*e_B e_B x^*xe_B e_B y) \\
&={\rm Tr}((e_B x^*x e_B) (e_B yy^* e_B)) \\
&={\rm Tr}(E_B (x^*x)e_B E_B (yy^*)e_B) \\
&={\rm Tr}(E_B (x^*x) e_B E_B (yy^*)) \\
&=\tau(E_B (x^*x) E_B (yy^*)) \\
&=\tau(E_B (E_B (x^*x) yy^*)) \\
&=\tau(E_B (x^*x) yy^*) \\
&=\Phi_B (x^* x\otimes (yy^*)^\circ) \\
&=\Phi_B (x^* x\otimes (y^*)^\circ y^\circ) \\
&=\Phi_B ((x^*\otimes (y^*)^\circ)(x\otimes y^\circ)) \\
&=\Phi_B ((x\otimes y^\circ)^*(x\otimes y^\circ)) \\
&=\|[x\otimes y^\circ]_{\H_B}\|^2_{\H_B}\, . \\
\end{split}
\]
By polarization, for all $\{x_j\}_{j=1}^n\subset N$ we have also
\[
(\Psi([x_j\otimes y_j^\circ]_{\H_B}|\Psi([x_k\otimes y_k^\circ]_{\H_B})_2=([x_j\otimes y_j^\circ]_{\H_B}|[x_k\otimes y_k^\circ]_{\H_B})_{\H_B}\qquad j,k=1\, ,\cdots\ ,n\, .
\]
Consider now $\nu =\sum_{k=1}^n [x_k\otimes y_k^\circ]_{\H_B}\in D(\Psi)$ so that $\nu^*\nu =\sum_{j,k=1}^n [x_j^* x_k\otimes (y_ky_j^*)^\circ]_{\H_B}\in D(\Psi)$ and then
\[
\|\Psi(\nu)\|^2_2 = \sum_{j,k=1}^n ([x_j\otimes y_j^\circ]_{\H_B}|[x_k\otimes y_k^\circ]_{\H_B})_{\H_B}=(\nu|\nu)_{\H_B}=\|\nu\|^2_{\H_B}\, .
\]
Hence the map $\Psi$ extends to an isometry from $\H_B$ into $L^2(\langle N,B\rangle, {\rm Tr})$ which is clearly an $N-N$-bimodule map. Since ${\rm Im}(\Psi)={\rm span}(Ne_B N)$ is weakly$^*$-dense in $\langle N,B\rangle$, it is also dense in $L^2(\langle N,B\rangle, {\rm Tr})$. By the isometric property we have that ${\rm Im}(\Psi)$ is closed so that $\Psi$ is a surjective isometry. Finally, for $x,y\in N$ we compute
\[
\begin{split}
(e_B|xe_B y)_{L^2({\rm Tr})}&={\rm Tr}(e_Bxe_B y) \\
&={\rm Tr}(E_B(x)e_By) \\
&=\tau(\Phi (E_B(x)e_By)) \\
&=\tau(E_B(x)y) \\
&=\Phi_B(x\otimes y^\circ)\, . \\
\end{split}
\]
\end{proof}

\begin{defn}({\bf $B$-invariant Dirichlet forms}).
Let $N$ be a von Neumann algebra admitting a normal faithful tracial state $\tau$ and $1_N\in B\subseteq N$ a von Neumann subalgebra.
\par\noindent
A Dirichlet form $(\E,\F)$ on $L^2(N,\tau)$ is said to be a $B$-{\it invariant} if
\[
b\F \subseteq\F\, ,\quad \E(b\xi|\xi)=\E(\xi|b^*\xi)\qquad b\in B, \quad \xi\in\F
\]
and
\[
\F b \subseteq\F\, ,\quad \E(\xi b|\xi)=\E(\xi b^*|\xi)\qquad b\in B, \quad \xi\in\F\, .
\]
Since, by definition, a Dirichlet form is $J$-real, the above two properties are in fact equivalent.
\vskip0.2truecm\noindent
In terms of the associated nonnegative, self-adjoint operator $(L,D(L))$, $B$-invariance means that the resolvent family $\{(\lambda +L)^{-1}: \lambda >0\}$ is $B$-bimodular for some and hence all $\lambda >0$
\[
\begin{split}
(\lambda +L)^{-1}(b\xi)&=b((\lambda +L)^{-1}\xi) \\
(\lambda +L)^{-1}(\xi b)&=((\lambda +L)^{-1}\xi)b\qquad \xi\in L^2(N,\tau)\, ,b\in B\, , \\
\end{split}
\]
or that, alternatively, the semigroup $\{e^{-tL}:t>0\}$ is for some and hence all $t>0$ a $B$-bimodular map
\[
\begin{split}
e^{-tL}(b\xi)&=b(e^{-tL}\xi) \\
e^{-tL}(\xi b)&=(e^{-tL}\xi)b\qquad \xi\in L^2(N,\tau)\, ,b\in B\, . \\
\end{split}
\]
Since the Markovianity of the Dirichlet form implies that the semigroup and the resolvent commute with the modular conjugation $J$, we have that the $B$-invariance of the Dirichlet form provides
that the semigroup and the resolvent belong to the relative commutant of $B$ in the basic construction $\langle N,B\rangle$:
\[
(\lambda+L)^{-1},\,\, e^{-tL}\in (JBJ)^\prime\cap B^\prime=\langle N,B\rangle\cap B^\prime\qquad t>0\, ,\quad \lambda >0\, .
\]
\end{defn}
\begin{defn}({\bf Relative discrete spectrum})
We say that $(\E,\F)$ or $(L,D(L))$ have {\it discrete spectrum relative to the inclusion} $B\subseteq N$ if the Markovian semigroup, or equivalently the resolvent, belongs to the {\it compact ideal space} $\mathcal{J}(\langle N,B\rangle)$ ([J], [Po2], [SiSm]) of the basic construction, generated by projections in $\langle N,B\rangle$ having finite trace:
\[
e^{-tL}\in \mathcal{J}(\langle N,B\rangle)\qquad {\rm for\,\,some\,\, and\,\, hence\,\, all}\,\, t>0\, ,
\]
\[
(\lambda+L)^{-1}\in \mathcal{J}(\langle N,B\rangle)\qquad {\rm for\,\,some\,\, and\,\, hence\,\, all}\,\, \lambda >0\, ,
\]
Another way to state it is that the spectrum of $(L,D(L))$ is a discrete subset of $\mathbb{R}_+$ and that each eigenprojection has finite $\rm Tr$ trace.
\end{defn}
\begin{defn}({\bf Relative spectral growth rate of Dirichlet forms})
Let $N$ be a von Neumann algebra admitting a normal faithful tracial state $\tau$ and $1_N\in B\subseteq N$ a von Neumann subalgebra. A Dirichlet form $(\E,\F)$ on $L^2(N,\tau)$ which is $B$-{\it invariant} is said to be have
\begin{itemize}
  \item {\it exponential spectral growth relative to $B\subseteq N$} if ${\rm Tr}(e^{-tL})=+\infty$ for some $t>0$;
  \item {\it subexponential spectral growth relative to $B\subseteq N$} if ${\rm Tr}(e^{-tL})<+\infty$ for all $t>0$.
\end{itemize}
Notice that, if ${\rm Tr}(e^{-tL})<+\infty$ for some $t>0$, then $e^{-tL}\in \mathcal{J}(\langle N,B\rangle)$ so that $(L,D(L))$ has discrete spectrum relative to the inclusion $B\subseteq N$. This applies, in particular, to $B$-{\it invariant} Dirichlet forms $(\E,\F)$  with subexponential spectral growth relative to $B\subseteq N$ which thus have necessarily discrete spectrum relative to the inclusion $B\subseteq N$.
\end{defn}

\begin{rem}
Let $\mathbb{E}^L$ be the spectral measure of the self-adjoint operator $\big (L,D(L)\big )$. If the Dirichlet form is $B$-invariant then $\mathbb{E}^L$ takes its values in the class of projections of the von Neumann algebra $\langle N,B\rangle$ and we can consider the positive measure $\nu_B^L :={\rm Tr}\circ \mathbb{E}^L$ on $[0,+\infty)$, supported by the spectrum ${\sigma(L)}$. In the framework of quantum statistical mechanics, where the operator $L$ may represent the total energy observable, the measure $\nu_B^L$ acquires the meaning of "density of states" in the sense that $\nu_B^L(\O)$ measures the number (relatively to $B$) of allowed energy levels located in a measurable subset $\O\subset \sigma(L)$. In this case the subspace $L^2(B,\tau)\subset L^2(N,\tau)$ may represent the manifold of ground states corresponding to the minimal allowable energy level (see also Example 4.7 below). The measure $\mu_B^L (d\lambda):=\lambda\nu_B (d\lambda)$ has then the meaning of "spectral energy density" in the sense that $\mu_B^L(\O)$ measures the energy of the system in a situation where all the allowed energy levels in $\O\subset \sigma(L)$ are occupied.
\par\noindent
The subexponential spectral growth condition (relatively to $B\subseteq N$) can be rephrased in terms of the Laplace Transform $\hat{\nu}_B^L$ saying that its abscissa of convergence vanishes. In this situation $\hat{\nu}_B^L(\beta)={\rm Tr\,}(e^{-\beta L})$ is called the {\it partition function} of the system, it is defined for all $\beta>0$ and the variable $\beta$ is interpreted as the {\it inverse temperature}.
The subexponential spectral growth condition also allows to consider the so called {\it Gibbs} normal states, defined by $\Phi_\beta (A):=\frac{{\rm Tr}(Ae^{-\beta L})}{{\rm Tr}(e^{-\beta L})}$, on the von Neumann (observable) algebra $\langle N,B\rangle$, for any fixed value of the inverse temperature $\beta >0$. These states possess properties by which they can be regarded as equilibria of the system at the fixed value of the inverse temperature. Finally,
notice that the subexponential spectral growth condition is equivalent to the requirement that the mean energy $\Phi_\beta (L):=\frac{{\rm Tr}(Le^{-\beta L})}{{\rm Tr}(e^{-\beta L})}$ of the system is finite for any $\beta>0$ (see [BR]).
\end{rem}
The following is the main result of this section.
\begin{thm}
Let $N$ be a von Neumann algebra admitting a normal faithful tracial state $\tau$ and $1_N\in B\subseteq N$ a von Neumann subalgebra.
\par\noindent
If there exists a $B$-invariant Dirichlet form $(\E,\F)$ on $L^2(N,\tau)$ having subexponential spectral growth relatively to $B\subseteq N$, then the inclusion $B\subseteq N$ is amenable.
\end{thm}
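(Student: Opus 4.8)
The plan is to run the argument of Theorem 3.9 verbatim, with the coarse correspondence $L^2(N\overline{\otimes}N^\circ)$ replaced by the relative coarse correspondence $\H_B\cong L^2(\langle N,B\rangle,{\rm Tr})$ of Proposition 4.2, and the r\^ole of nuclearity (trace-class on $L^2(N)$) now played by ${\rm Tr}$-trace-class in $\langle N,B\rangle$. By Popa's criterion the inclusion is amenable iff the standard bimodule ${}_NL^2(N,\tau)_N$ is weakly contained in $\H_B$, so it suffices to approximate the identity correspondence by sub-correspondences of $\H_B$. First I would record that $B$-invariance of $(\E,\F)$ places $T_t:=e^{-tL}$ in $\langle N,B\rangle\cap B'$, and that subexponential spectral growth relative to $B\subseteq N$ means precisely ${\rm Tr}(e^{-tL})<+\infty$ for all $t>0$; in particular ${\rm Tr}(T_t^*T_t)={\rm Tr}(e^{-2tL})<+\infty$, so each $T_t$ is an honest vector of $L^2(\langle N,B\rangle,{\rm Tr})$. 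Using complete positivity of $T_t$ I then form, as in Lemma 3.7, the positive binormal functional $\Phi_t(x\otimes y^\circ):=(i_\tau(y^*)|T_t i_\tau(x))$ on $N\otimes_{\rm max}N^\circ$; since $\tau$ is a trace, $i_\tau(x)=x\xi_\tau$ and $\Phi_t(x\otimes y^\circ)=\tau(S_t(x)y)$, where $S_t$ is the associated $B$-bimodular, $\tau$-symmetric Markovian semigroup on $N$ determined by $S_t(x)\xi_\tau=T_t(x\xi_\tau)$.

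The key computation is to realise $\Phi_t$ inside $\H_B$. Using $e_B\xi_\tau=\xi_\tau$ and the identity $(\xi_\tau|W\xi_\tau)_{L^2(N,\tau)}={\rm Tr}(We_B)$ for $W\in\langle N,B\rangle$, one gets $\tau(S_t(x)c)=(\xi_\tau|cT_tx\xi_\tau)={\rm Tr}(T_txe_Bc)$ for every $c\in N$, so that $S_t(x)=\Phi(T_txe_B)$ through the pull-down map $\Phi$ with ${\rm Tr}=\tau\circ\Phi$. Feeding this back and using $N$-bimodularity of $\Phi$ yields the clean formula
\[
\Phi_t(x\otimes y^\circ)={\rm Tr}(T_t\,xe_By)=\bigl(T_t\,\big|\,xe_By\bigr)_{L^2(\langle N,B\rangle,{\rm Tr})}=\bigl(T_t\,\big|\,\pi_B(x\otimes y^\circ)e_B\bigr)_{\H_B}\, ,
\]
where $\pi_B(x\otimes y^\circ)\zeta=x\zeta y$ is the relative coarse representation and $e_B$ its cyclic vector. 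Thus $\Phi_t$ is a positive functional which is simultaneously a matrix coefficient of $\pi_B$ between the two vectors $T_t$ and $e_B$ of $L^2(\langle N,B\rangle,{\rm Tr})$; this is the exact analogue of the representation $\Phi_t(z)=(Z_t|i_{\tau\otimes\tau^\circ}(z))$ used in Theorem 3.9, with $Z_t=T_t$ and its membership in $L^2$ guaranteed here by the ${\rm Tr}$-trace-class hypothesis.

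From here I would conclude that $\Phi_t$ is weakly contained in $\pi_B$: the expression above extends $\Phi_t$ to the $\sigma$-weakly continuous functional $m\mapsto(T_t|m\,e_B)$ on $M:=\pi_B(N\otimes_{\rm max}N^\circ)''$, which is positive since it agrees with the positive $\Phi_t$ on the $\sigma$-weakly dense $*$-subalgebra $\pi_B(N\otimes_{\rm max}N^\circ)$; a normal positive functional on $M$ is a sum of vector functionals of $\pi_B$, whence the correspondence $\H_{S_t}$ associated to $\Phi_t$ is weakly contained in $\H_B$. Finally, strong continuity of the semigroup gives $S_t(x)\to x$ as $t\downarrow0$, hence $\Phi_t(x\otimes y^\circ)=\tau(S_t(x)y)\to\tau(xy)$, the state of the identity correspondence ${}_NL^2(N,\tau)_N$; since weak containment is preserved under such weak$^*$ limits, the identity correspondence is weakly contained in $\H_B$ and $B\subseteq N$ is amenable. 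As in Theorem 3.9 one normalises by $(\xi_\tau|T_t\xi_\tau)=\tau(S_t(1))\to1$ to reduce to states.

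The main obstacle is the middle step. In the absolute case $\pi_{\rm co}$ is literally the standard representation of $N\overline{\otimes}N^\circ$, so a positive normal functional is automatically a vector functional in the positive cone and one simply quotes standard-form theory. Here, by contrast, $\pi_B$ is a representation of $N\otimes_{\rm max}N^\circ$ on the standard space of the \emph{larger} algebra $\langle N,B\rangle$, for which $e_B$ is neither cyclic nor separating; consequently the off-diagonal identity $\Phi_t={\rm Tr}(T_t\,xe_By)$ and the passage from a positive matrix coefficient to genuine weak containment must be extracted by hand from the basic-construction data, namely the pull-down map $\Phi$ with ${\rm Tr}=\tau\circ\Phi$, the relation $e_B\langle N,B\rangle e_B=Be_B$, and $e_B xe_B=E_B(x)e_B$. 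Verifying $(\xi_\tau|W\xi_\tau)_{L^2(N,\tau)}={\rm Tr}(We_B)$ and $S_t(x)=\Phi(T_txe_B)$ rigorously, and checking that the extension of $\Phi_t$ to $M$ is indeed positive, are the points that require care.
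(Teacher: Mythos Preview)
Your approach is essentially the paper's own: both establish the key identity $\Phi_t(x\otimes y^\circ)={\rm Tr}(T_t\,xe_By)=(T_t\,|\,xe_By)_{L^2(\langle N,B\rangle,{\rm Tr})}$, deduce that $\Phi_t$ extends to a normal positive functional on the von Neumann algebra generated by $\pi_B$ so that the GNS correspondence of $\Phi_t$ sits inside a multiple of $\H_B$, and conclude by strong continuity of the semigroup. The only cosmetic difference is that the paper verifies the identity $(T^*|xe_By)_{L^2({\rm Tr})}=(i_\tau(y^*)|Ti_\tau(x))_{L^2(\tau)}$ by a direct computation for $T=ue_Bv\in Ne_BN$ and then invokes weak$^*$-density of ${\rm span}(Ne_BN)$ in $\langle N,B\rangle$, whereas you reach the same formula via the pull-down relation $S_t(x)=\Phi(T_txe_B)$ and the identity $(\xi_\tau|W\xi_\tau)={\rm Tr}(We_B)$. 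One small correction to your discussion of the obstacle: $e_B$ \emph{is} cyclic for $\pi_B$ on $L^2(\langle N,B\rangle,{\rm Tr})$ by Proposition~4.1 (the isometry $\Psi$ has range ${\rm span}(Ne_BN)$, which is dense), so the passage from the off-diagonal matrix coefficient to weak containment is not genuinely harder than in the absolute case.
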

\begin{proof}
Let us check first the following identity
\[
(T^*|xe_B y)_{L^2({\rm Tr})}=(i_\tau (y^*) | T (i_\tau (x))_{L^2(\tau))}\qquad T\in \langle N,B\rangle\cap L^2(\langle N,B\rangle,{\rm Tr})\, , \quad x,y\in N\, .
\]
As ${\rm span}(Ne_B N)$ is weakly$^*$ dense in $\langle N,B\rangle$, it is enough to prove the identity for $T\in Ne_B N$.
If $T =u e_B v$ for some $u,v\in N$ we have
\[
e_ByT xe_B = e_B yu e_B v xe_B =(e_B yu e_B)( e_B v xe_B)= E_B (yu) e_B E_B (v x)e_B
\]
and then
\[
\begin{split}
(T^*|xe_B y)_{L^2({\rm Tr})} &={\rm Tr}(T xe_By) \\
&={\rm Tr}(e_ByT xe_B) \\
&=\tau(\Phi (e_ByT xe_B)) \\
&=\tau(\Phi (E_B (yu) e_B E_B (v x)e_B)) \\
&=\tau(E_B (yu) \Phi (e_B E_B (v x)e_B)) \\
&=\tau(E_B (yu) \Phi (E_B (v x)e_B)) \\
&=\tau(E_B (yu) E_B (v x)) \\
&=\tau(E_B (yu E_B (v x)) ) \\
&=\tau(yu E_B (v x)) \\
&=(u^* y^*\xi_\tau | E_B (v x)\xi_\tau)_{L^2(\tau)} \\
&=(i_\tau (y^*) | u E_B (v x)\xi_\tau)_{L^2(\tau)} \\
&=(i_\tau (y^*) | u e_B (v (x\xi_\tau)))_{L^2(\tau)} \\
&=(i_\tau (y^*) | u e_B v (i_\tau (x))_{L^2(\tau)} \\
&=(i_\tau (y^*) | T (i_\tau (x))_{L^2(\tau)} \\
\end{split}
\]
so that the identity holds true. Under the hypothesis of subexponential spectral growth, we have that $T_t :=e^{-tL}\in L^2(\langle N,b\rangle,{\rm Tr})\cap L^1(\langle N,b\rangle,{\rm Tr})$ for all $t>0$. Applying the above identity, we have that the binormal states
\[
\Phi_t: N\otimes_{\rm max}N^\circ \rightarrow\mathbb{C}\qquad \Phi_t (x\otimes y^\circ):=\frac{1}{(\xi_\tau|T_t\xi_\tau)_{L^2(N,\tau)}}(i_\tau (y^*) | T_t (i_\tau (x))_{L^2(N,\tau)}\, ,
\]
well defined, by strong continuity of the semigroup, for $t$ sufficiently close to zero, may be represented for $t>0$ as $\Phi_t (x\otimes y^\circ)=\frac{1}{(\xi_\tau|T_t\xi_\tau)_{L^2(N,\tau)}}(T_t|xe_B y)_{L^2(\langle N,b\rangle,{\rm Tr})}$.


By the identity above, $\Phi_t$ extends as a normal state on the von Neumann algebra generated by the left and right representations of $N$ in $L^2(\langle N,B\rangle,{\rm Tr})$. The
 $N$-$N$-correspondence $\H_t$ generated by $\Phi_t$ is thus a  sub-correspondence of a multiple of the $N$-$N$-correspondence $L^2(\langle N,B\rangle,{\rm Tr})$.
 Since the semigroup $\{T_t:t>0\}$ strongly converges to the identity operator on $L^2(N,\tau)$, we obtain that the trivial correspondence from $N$ to $N$ is weakly contained in the relative correspondence $\H_B$.
\end{proof}
\begin{ex}(Minimal and maximal inclusions of a Dirichlet form)
Let $N$ be a von Neumann algebra and $\tau$ a normal, faithful, tracial state and let $(\E,\F)$ be a Dirichlet form on $L^2(N,\tau)$ with associated self-adjoint operator $(L,D(L))$.
\par\noindent
Assume that $\inf\sigma(L)=0$ and that this is an eigenvalue (not necessarily of finite multiplicity). The spectral projection $P_0$ onto the eigenspace corresponding to the Borel subset $\{0\}\subset [0,+\infty)$ can be represented as the strong limit $P_0=\lim_{t\to +\infty}e^{-tL}$ . Hence $P_0$ is a completely Markovian projection, so that there exists a von Neumann subalgebra $B_{\rm min}\subseteq N$ such that $P_0=e_{B_{\rm min}}$. Obviously the associated Markovian semigroup is $B_{\rm min}$-bimodular and the Dirichlet form is $B_{\rm min}$-invariant.
\vskip0.2truecm\noindent
Alternatively, one can consider the inclusion $B_{\rm max}\subset N$ where $B_{\rm max}:=\{T_t:t>0\}^\prime\cap N$ is the relative commutant of the Markovian semigroup in $N$. Notice that by the Spectral Theorem $B_{\rm max}=\{T_t\}^\prime\cap N$ for all $t>0$. Obviously the associated Markovian semigroup is $B_{\rm max}$-bimodular and the Dirichlet form is $B_{\rm max}$-invariant.


\begin{prop}
Let $(N,\tau)$ be a finite von Neumann algebra with faithful, normal trace. Let $(\E,F)$ a Dirichlet form on $L^2(N,\tau)$ with generator $(L,D(L))$
having pure point spectrum made by distinct, isolated eigenvalues $\sigma(L):=\{\lambda_0 <\lambda_1< \lambda_2 <\cdots\}$ and assume $\lambda_0 :=\inf\sigma(L)=0$.
\par\noindent
Then $(\E,F)$ has discrete spectrum relative to $B_{\rm min}$ (resp. $B_{\rm max}$) if and only if each eigenspace $E_\lambda\subset L^2(N,\tau)$, $\lambda\in \sigma(L)$, has finite coupling constant
${\rm dim\,}_{B_{\rm min}}(E_\lambda)<+\infty$ (resp. ${\rm dim\,}_{B_{\rm max}}(E_\lambda)<+\infty$) relative to $B_{\rm min}$ (resp. $B_{\rm max})$.
\end{prop}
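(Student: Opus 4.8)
The plan is to reduce the relative--discrete--spectrum condition to a finite--trace condition on the spectral projections of $L$, and then to recognise that trace as the coupling constant $\dim_B(E_\lambda)$. I would argue uniformly for $B\in\{B_{\rm min},B_{\rm max}\}$, since both inclusions make the form $B$-invariant (Example 4.7). By the definition of $B$-invariance the semigroup $e^{-tL}$, and hence by the Spectral Theorem every spectral projection $P_\lambda:=\mathbb{E}^L(\{\lambda\})$, $\lambda\in\sigma(L)$, belongs to $\langle N,B\rangle\cap B'$. In particular each $P_\lambda$ is a genuine projection of the basic construction, so that $E_\lambda=P_\lambda L^2(N,\tau)$ is a right $B$-module (as $\langle N,B\rangle=(JBJ)'$) and the canonical trace ${\rm Tr}(P_\lambda)$ is well defined in $[0,+\infty]$.

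Next I would reduce the membership $e^{-tL}\in\J(\langle N,B\rangle)$ to the finiteness of ${\rm Tr}(P_\lambda)$ for every $\lambda$. Since $\sigma(L)=\{\lambda_0<\lambda_1<\cdots\}$ consists of isolated points forming a strictly increasing sequence, necessarily $\lambda_k\to+\infty$, so the spectral decomposition $e^{-tL}=\sum_{k\ge 0}e^{-t\lambda_k}P_{\lambda_k}$ converges in operator norm. If every $P_{\lambda_k}$ has finite trace, each partial sum is a finite combination of finite--trace projections, hence lies in $\J(\langle N,B\rangle)$, and the norm limit $e^{-tL}$ lies in the (norm--closed) compact ideal. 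Conversely, $P_{\lambda_k}$ is the spectral projection of the self--adjoint element $e^{-tL}$ at the isolated, nonzero eigenvalue $e^{-t\lambda_k}$, and the relative analogue of the spectral theory of compact operators then forces ${\rm Tr}(P_{\lambda_k})<+\infty$ once $e^{-tL}\in\J(\langle N,B\rangle)$. This is precisely the reformulation already recorded in the definition of relative discrete spectrum, so that condition is equivalent to ${\rm Tr}(P_\lambda)<+\infty$ for all $\lambda\in\sigma(L)$.

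The crux is then the identification ${\rm Tr}(P_\lambda)=\dim_B(E_\lambda)$. Since $\langle N,B\rangle=(JBJ)'$ is the commutant of the right action of $B$ on $L^2(N,\tau)$, for any projection $P\in\langle N,B\rangle$ the range $PL^2(N,\tau)$ is a right $B$-module whose coupling constant is computed by the canonical trace, normalised so that $\dim_B(e_B L^2(N,\tau))={\rm Tr}(e_B)=1$ (consistently with $e_B L^2(N,\tau)=L^2(B,\tau)$). This is the standard Murray--von Neumann / Jones--Pimsner--Popa computation of the relative dimension in the basic construction (see [J], [PiPo], [Po2], [SiSm]), and it gives $\dim_B(E_\lambda)={\rm Tr}(P_\lambda)$ for every $\lambda$. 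Combining the two reductions yields the assertion: $(\E,\F)$ has discrete spectrum relative to $B$ if and only if ${\rm Tr}(P_\lambda)<+\infty$ for all $\lambda$, if and only if $\dim_B(E_\lambda)<+\infty$ for all $\lambda$; the argument is verbatim the same for $B_{\rm min}$ and for $B_{\rm max}$.

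I expect the main obstacle to be this last identification: one must make sure that the module structure measured by $\dim_B$ is indeed the right $B$-module coming from $(JBJ)'$, and that the normalisation ${\rm Tr}(e_B)=1$ is the one making the coupling constant agree with the basic--construction trace, rather than differing from it by the index factor $[N:B]$. A secondary point needing care is the forward direction of the second step, where one invokes the relative (Breuer) analogue of the fact that nonzero spectral projections of a compact operator are finite; this is where the hypothesis that the eigenvalues are isolated and of the stated increasing form is essential.
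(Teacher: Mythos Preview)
The paper states this proposition without proof (it appears inside Example~4.7 and is followed only by the remark on well-definedness of the coupling constants), so there is no argument to compare against. Your proof is correct and is precisely the intended one: the $B$-invariance places each spectral projection $P_\lambda$ in $\langle N,B\rangle$, the definition of relative discrete spectrum reduces to ${\rm Tr}(P_\lambda)<+\infty$ for every $\lambda$ via the norm-convergent spectral series and the Breuer theory of the compact ideal in a semifinite algebra, and the identification ${\rm Tr}(P_\lambda)=\dim_B(E_\lambda)$ is the standard Murray--von Neumann computation in the basic construction (cf.\ [GHJ, Section~3.2], [PiPo], [SiSm]) with the normalisation ${\rm Tr}(e_B)=1$. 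The caveats you raise---the normalisation issue and the need for isolated eigenvalues to invoke the relative compact-operator spectral theory---are exactly the right points to check, and both go through without difficulty here.
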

\end{ex}
\noindent
Remark that the finite coupling constant ${\rm dim\,}_{B_{\rm min}}(E_\lambda)<+\infty$ (resp. ${\rm dim\,}_{B_{\rm max}}(E_\lambda)<+\infty$) relative to $B_{\rm min}$ (resp. $B_{\rm max})$ is well defined for any eigenvalue $\lambda\in \sigma(L)$ because any eigenspace $E_\lambda$ is obviously a left (and also right) $B_{\rm min}$-module (resp. $B_{\rm max}$-module). We refer to [GHJ Section 3.2] for the definition and properties of the Murray-von Neumann coupling constant.

\begin{ex}
Let $K<\Gamma$ be an inclusion of countable, discrete groups and let $L(K)\subset L(\Gamma)$ be the inclusion of the finite von Neumann algebras generated by $K$ and $\Gamma$, respectively. Their standard spaces coincide with $l^2(K)$ and $l^2(\Gamma)$ respectively and the projection $e_{L(K)}$ coincides with the projection from $l^2(\Gamma)$ onto its subspace $l^2 (K)$.
\par\noindent
Let $\ell :\Gamma\to [0,+\infty)$ be a c.n.d. function. The Dirichlet form $(\E_\ell,\F_\ell)$ associated to $\ell$ (introduced in Section 3.3) is $L(K)$-invariant if and only if $\ell$ vanishes on $K$ or, equivalently, if $\ell$ is a right $K$-invariant function. In this situation we have:
\end{ex}

\begin{prop}
Let $\Gamma$ be a countable, discrete group and let $L(\Gamma)$ be its left von Neumann algebras. Let $\ell :\Gamma\to [0,+\infty)$ be a c.n.d. function and $(\E_\ell,\F_\ell)$ the associated Dirichlet form. Denote by $H:=\{s\in\Gamma:\ell(s)=0\}$ the subgroup where $\ell$ vanishes. We then have
\item i) $B_{\rm min}=B_{\rm max}=L(H)$;
\item ii) If $K$ is a subgroup of $G$, then
 $(\E_\ell,\F_\ell)$ is $L(K)$-invariant if and only if $K<H$. In this case\,:
 \par\noindent
 ii.a) $\ell$ is $L(K)$-biinvariant
\par\noindent
ii.b)  $(\E_\ell,\F_\ell)$ has discrete spectrum relative to $L(K)\subset L(\Gamma)$ if and only if the function
\[
{\ell}_{G/K}:G/K\to [0,+\infty)\qquad {\ell}_{G/K}({\widetilde s}):=\ell (s)
\]
defined for ${\widetilde s}=sK\in G/K$, is proper.
\par\noindent
ii.c) If, for any $t>0$, $\sum_{\widetilde s\in G/K} e^{-t\ell_{G/K}(\widetilde s)}<+\infty$, then the inclusion $L(K)\subset L(G)$ is amenable.
\end{prop}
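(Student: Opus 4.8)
The plan is to use that, on the standard space $l^2(\Gamma)$, the generator $L$ of $(\E_\ell,\F_\ell)$ is nothing but the multiplication operator by $\ell$, so that for each $\lambda\ge 0$ the eigenspace is $l^2(\ell^{-1}(\lambda))$ with eigenprojection $Q_\lambda$ given by multiplication by the indicator $\mathbf 1_{\ell^{-1}(\lambda)}$. Since $\ell$ vanishes precisely on $H$, one has $\ell(s)=0\iff s\in H$; moreover $\ell$ is right $H$-invariant (it vanishes on $H$, see Example 4.9) and hence, using $\ell(s)=\ell(s^{-1})$, also left $H$-invariant. When $K<H$ the same holds for $K$, so each level set $\ell^{-1}(\lambda)$ is a union of right cosets $sK$ and $\ell$ descends to the well-defined $\ell_{G/K}$ on $G/K$. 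All four assertions will be read off this description.

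For i) I would treat $B_{\rm min}$ and $B_{\rm max}$ separately. The kernel of $L$ is $l^2(H)=L^2(L(H),\tau)$, so the spectral projection $P_0=\lim_{t\to+\infty}e^{-tL}$ onto the bottom eigenspace is exactly the Jones projection $e_{L(H)}$; by the definition of $B_{\rm min}$ in Example 4.7 this gives $B_{\rm min}=L(H)$. For $B_{\rm max}=\{T_t:t>0\}'\cap L(\Gamma)$ I would test commutation with $T_t$ (multiplication by $e^{-t\ell}$) on Fourier coefficients: if $x\in L(\Gamma)$ has coefficients $c_g=\tau(x\lambda(g)^*)$, then $xT_t=T_tx$ for all $t$ forces $c_g=0$ unless $\ell(gu)=\ell(u)$ for every $u$. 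Putting $u=e$ shows such $g$ lie in $H$, while for $g\in H$ left $H$-invariance gives $\ell(gu)=\ell(u)$; hence $x$ lies in the relative commutant iff it is supported on $H$, that is $B_{\rm max}=L(H)$.

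For ii) the equivalence ``$L(K)$-invariance $\iff K<H$'' is immediate from Example 4.9, since $L(K)$-invariance means $\ell|_K\equiv 0$, i.e. $K\subseteq H$. Assertion ii.a is the elementary computation $\ell(kg)=\ell(g^{-1}k^{-1})=\ell(g^{-1})=\ell(g)$ for $k\in K$, obtained from right $K$-invariance together with $\ell(s)=\ell(s^{-1})$. The heart is ii.b. Here I would compute the relative trace $\rm Tr$ of the basic construction $\langle L(\Gamma),L(K)\rangle$ on the eigenprojections: since $P_{sK}:=\lambda(s)e_{L(K)}\lambda(s)^*$ is multiplication by $\mathbf 1_{sK}$, the defining property ${\rm Tr}(xe_{L(K)}y)=\tau(xy)$ yields ${\rm Tr}(P_{sK})=\tau(\lambda(s)\lambda(s^{-1}))=1$, and from the orthogonal decomposition $Q_\lambda=\sum_{sK\in\ell_{G/K}^{-1}(\lambda)}P_{sK}$ together with normality of $\rm Tr$ one gets ${\rm Tr}(Q_\lambda)=\#\,\ell_{G/K}^{-1}(\lambda)$. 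I would then match this with Definition 4.3: $\ell_{G/K}$ proper means each sublevel set $\{\ell_{G/K}\le C\}$ is finite, which is equivalent to saying both that the values of $\ell$ form a discrete subset of $\Rplus$ (so $\sigma(L)$ is discrete) and that each value is attained by finitely many cosets (so every ${\rm Tr}(Q_\lambda)$ is finite); conversely these two clauses of Definition 4.3 recombine into finiteness of $\{\ell_{G/K}\le C\}$ for every $C$.

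Finally, for ii.c I would sum the previous computation over eigenvalues, obtaining
\[
{\rm Tr}(e^{-tL})=\sum_\lambda e^{-t\lambda}\,{\rm Tr}(Q_\lambda)=\sum_{\widetilde s\in G/K}e^{-t\ell_{G/K}(\widetilde s)}\, ,
\]
so that the hypothesis $\sum_{\widetilde s\in G/K}e^{-t\ell_{G/K}(\widetilde s)}<+\infty$ for all $t>0$ says exactly that the $L(K)$-invariant Dirichlet form $(\E_\ell,\F_\ell)$ has subexponential spectral growth relative to $L(K)\subset L(\Gamma)$ in the sense of Definition 4.4; Theorem 4.6 then delivers amenability of the inclusion. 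I expect ii.b to be the main obstacle: one must pin down the semifinite trace $\rm Tr$ on $\langle L(\Gamma),L(K)\rangle$, justify the (possibly infinite) orthogonal sum $Q_\lambda=\sum P_{sK}$ together with countable additivity of $\rm Tr$, and then translate faithfully the two separate requirements of Definition 4.3 — discreteness of $\sigma(L)$ and finiteness of every eigenprojection trace — into the single properness condition on $\ell_{G/K}$.
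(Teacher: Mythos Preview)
Your proposal is correct and follows essentially the same route as the paper: identify $B_{\rm min}$ via $\ker L=l^2(H)$, decompose each eigenspace $E_\lambda$ as $\bigoplus_{\ell(s)=\lambda} l^2(sK)$ with ${\rm Tr}(P_{sK})=1$, and read off ii.b) and ii.c) from this together with Theorem~4.6. The only noticeable difference is your treatment of $B_{\rm max}$: you test $xT_t=T_tx$ against every basis vector $\delta_u$ to extract the condition $\ell(gu)=\ell(u)$ for all $u$, whereas the paper applies commutation with the resolvent to the single cyclic vector $\delta_e$ (which lies in $\ker L$), obtaining $L(x\delta_e)=0$ and hence $x(t)\ell(t)=0$ directly --- a slightly quicker path to the same conclusion.
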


\begin{proof}
i) Let $x=\sum_{t\in \Gamma}x(t)\lambda(t)\in B_{\rm max}=\{T_t:t>0\}^\prime\cap L(\Gamma)$. We then have 
\[
x\delta_e=x(I+L)^{-1}\delta_e = (I+L)^{-1}x\delta_e\ ,
\]
which implies  $0=L(x\delta_e)=L(\sum_{t\in \Gamma}x(t)\lambda(t)\delta_e)=L(\sum_{t\in \Gamma}x(t)\delta_t)=\sum_{t\in \Gamma}x(t)\ell(t)\delta_t$. So that $x(t)\ell(t)=0$ for all $t\in\Gamma$ which in turn implies $x\in L(H)=B_{\rm min}$. The reverse inclusion is obvious.
\par\noindent
ii) follows from the arguments of the example above. For ii) b) just notice that $\lambda(s)e_{L(K)}\lambda(s)^{-1}$ is the orthogonal projection $P_{sK}$ onto the subspace $l^2(sH)$. Hence the eigenspace $E_\lambda$ corresponding to the eigenvalue $\lambda\in \sigma(L)$ is given by $\bigoplus_{\widetilde s\in G/K\, ,\,\, \ell(s)=\lambda} l^2(sK)$. Hence, $L$ will have discrete spectrum relative to $K$ if and only if each of these sums is finite (i.e. for all $\lambda$) and the set of values of $\ell$ is discrete, i.e. $\ell^{-1}(\{\lambda\})/K$ is finite in $G/K$, i.e. ${\ell}_{G/K}:G/K\to [0,+\infty)$ is proper.

\par\noindent For ii.c), note that in the basic construction for $B=L(K)\subset N=L(G)$, $P_{sK}=\lambda(s)e_B\lambda(s)$ belongs to $\langle N,B \rangle$ and has trace $1$. Hence $Tr(e^{-tL})=\sum_{\widetilde s\in G/K} e^{t\ell(s)}$ for all $t>0$.
\end{proof}
\begin{rem}
\item i) If the Dirichlet form $(\E_\ell,\F_\ell)$ has discrete spectrum relative to $L(K)$ then \par\noindent a) the function ${\ell}_{G/H}:G/H\to [0,+\infty)$ is proper and $(\E_\ell,\F_\ell)$ has discrete spectrum relative to $L(H)$\,; \par\noindent b) $\ell_{G/K}$ being left $K$-invariant, hence constant on left $K$-cosets, and proper, left $K$-cosets in $G/K$ must be finite sets. In other words, $K$ is quasi-normal in $G$.
\item ii) On the other hand, if $(\E_\ell,\F_\ell)$ has discrete spectrum relative to $L(H)$ then the function ${\ell}_{G/K}:G/K\to [0,+\infty)$ will be constant onto the right $H$-coset in $G/K$. Thus $(\E_\ell,\F_\ell)$ has discrete spectrum relative to $L(K)$ if and only if each right $H$-coset in $G/K$ is a finite union of $K$-cosets, which happens if and only if $K$ has finite index in $H$, i.e. when the homogeneous space $H/K$ is finite.
\end{rem}

\section{A spectral approach to the Relative Haagerup property}

As already mentioned in the Introduction, in a recent work [CaSk], M. Caspers and A. Skalski characterized von Neumann algebras having Property (H) in terms of the existence of a Dirichlet form with discrete spectrum. In the spirit of the previous section, we extend their result to {\it relative property (H)}, as defined by S. Popa [Po1,2], for inclusions of von Neumann algebras, using a completely different approach. We will make use of the following well known properties:

\begin{richiamo}
Let $(N,\tau)$ be a von Neumann algebra endowed with a normal, faithful trace and let $\varphi:N\to N$ be a completely positive, normal contraction such that $\tau\circ\varphi\le\tau\, .$ Then\par\noindent
i) there exists a contraction $T_\varphi\in \mathcal{B}(L^2(N,\tau))$ characterized by
\[
T_\varphi (x\xi_\tau) = \varphi (x)\xi_\tau\qquad x\in N\, ;
\]
ii) there exists a completely positive, normal contraction $\varphi^*:N\to N$ such that
\[
T_{\varphi^*}=(T_\varphi)^*
\]
or, more explicitly,
\[
(\varphi^* (y)\xi_\tau|x\xi_\tau)=(y\xi_\tau|\varphi (x)\xi_\tau)\qquad x,y\in N\, .
\]
\end{richiamo}

\begin{defn}([Po1,2])
Let $N$ be a finite von Neumann algebra and $B\subseteq N$ a von Neumann subalgebra. Then $N$ is said to have {\it Property (H) relative to $B$} if there exist a normal, faithful tracial state $\tau$ on $N$ and a net $\{\varphi_i:i\in I\}$ of normal completely positive, $B$-bimodular maps on $N$ satisfying the conditions
\begin{enumerate}
  \item [i)] $\tau\circ\varphi_i\le \tau$
  \item [ii)] $T_{\varphi_i}\in \mathcal{J}(\langle N,B\rangle)$
  \item [iii)] $\lim_{i\in I} \|x\xi_\tau-T_{\varphi_i} (x\xi_\tau)\|_2 =0$ for all $x\in N$.
\end{enumerate}
In this definition $\mathcal{J}(\langle N,B\rangle)$ is the compact ideal space, i.e. the norm closed ideal generated by projections with finite trace in $\langle N,B\rangle$ and $T_{\varphi_i}$ is the operator defined in item 5.1 above.
\end{defn}

By a remark of S. Popa [Po2], the maps $\varphi_i$ in the definition above can be chosen to be contractions. In the following we shall always assume this property for approximating nets of the identity map of a von Neumann algebra.

\begin{thm}\label{propH}
Let $N$ be a finite von Neumann algebra with countably decomposable center and faithful tracial state $\tau$. Let $B\subseteq N$ be a sub-von Neumann algebra such that $L^2(N,\tau)$, as $B$-module, admits a countable base. Then the following properties are equivalent
\begin{itemize}
\item [i)] $N$ has Property (H) relative to $B$
\item [ii)] there exists a $B$-invariant Dirichlet form $(\E,\F)$ on $L^2(N,\tau)$ with discrete spectrum relative to $B$.
\end{itemize}
\end{thm}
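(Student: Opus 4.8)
The plan is to prove the two implications separately: $(ii)\Rightarrow(i)$ is essentially a translation between the two languages, while $(i)\Rightarrow(ii)$ carries the real content and is where I expect the difficulty to lie.

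For the easy direction $(ii)\Rightarrow(i)$, I would start from a $B$-invariant Dirichlet form $(\E,\F)$ whose generator $(L,D(L))$ has discrete spectrum relative to $B$, and pass to the associated weakly$^*$-continuous, completely positive, $\tau$-symmetric semigroup $\{S_t:t>0\}$ on $N$ furnished by the correspondence of Section 2; in the tracial case $i_\tau(x)=x\xi_\tau$ and $T_{S_t}=e^{-tL}$. Each $S_t$ is then a normal completely positive contraction which is $B$-bimodular (by $B$-invariance) and satisfies $\tau\circ S_t\le\tau$, since Markovianity gives $S_t(1_N)\le 1_N$ and $\tau(S_t(x))=\tau(x\,S_t(1_N))\le\tau(x)$ for $x\ge 0$ by $\tau$-symmetry. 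The relative discreteness hypothesis is exactly $T_{S_t}=e^{-tL}\in\J(\langle N,B\rangle)$, and strong continuity of the semigroup yields $\|x\xi_\tau-T_{S_t}(x\xi_\tau)\|_2\to 0$ as $t\downarrow 0$. Hence the sequence $\{S_{1/n}\}_{n}$ witnesses Property $(H)$ relative to $B$.

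For the hard direction $(i)\Rightarrow(ii)$, I would first use the countable decomposability of the center and the countable $B$-base of $L^2(N,\tau)$ to fix a countable family $\{\xi_k\}$ of dense span and to extract from the approximating net a sequence $\{\varphi_n\}$ still satisfying $\|x\xi_\tau-T_{\varphi_n}(x\xi_\tau)\|_2\to 0$. Next I would symmetrize by setting $\psi_n:=\frac{1}{2}(\varphi_n+\varphi_n^*)$, with $\varphi_n^*$ the map of item~5.1: then $T_n:=T_{\psi_n}=\frac{1}{2}(T_{\varphi_n}+T_{\varphi_n}^*)$ is self-adjoint, completely Markovian (both $\varphi_n(1_N)\le 1_N$ and $\varphi_n^*(1_N)\le 1_N$ give $T_n\xi_\tau\le\xi_\tau$), $B$-bimodular, of norm $\le 1$, and lies in the $*$-ideal $\J(\langle N,B\rangle)$; being $B$-bimodular it sits in $\langle N,B\rangle\cap B'$. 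Because $(T_{\varphi_n}\xi|\xi)\to(\xi|\xi)$ for $\xi\in N\xi_\tau$, one has $((I-T_n)\xi|\xi)\to 0$ on the $\xi_k$. I would then fix weights $c_n\uparrow+\infty$ and, passing to a further subsequence, arrange $((I-T_n)\xi_k|\xi_k)\le 2^{-n}/c_n$ for $k\le n$, so that
\[
\E[\xi]:=\sum_{n\ge 1} c_n\,\bigl((I-T_n)\xi\,\big|\,\xi\bigr)
\]
is finite on each $\xi_k$. As the increasing supremum of the bounded, $B$-invariant, completely Dirichlet forms $c_n(I-T_n)$, the form $(\E,\F)$ (with $\F$ the subspace of finiteness, which is dense since it contains $\mathrm{span}\{\xi_k\}$) is a densely defined, $B$-invariant, completely Dirichlet form, and its generator $L=\sum_n c_n(I-T_n)$ is affiliated with $\langle N,B\rangle\cap B'$.

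The heart of the proof, and the step I expect to be the main obstacle, is showing that $L$ has discrete spectrum relative to $B$, i.e. that every spectral projection has finite trace at \emph{every} level, not merely near the bottom of the spectrum. Given $\lambda>0$, put $P:=\mathbb{E}^L([0,\lambda])\in\langle N,B\rangle$, so $\mathrm{ran}\,P\subseteq D(L)\subseteq\F$ and $(L\xi|\xi)\le\lambda\|\xi\|^2$ there; since $c_n(I-T_n)\le L$ as forms, this gives $c_nP(I-T_n)P\le\lambda P$, whence $PT_nP=P-P(I-T_n)P\ge(1-\lambda/c_n)P$. Choosing $n$ with $c_n>\lambda$ makes $\delta:=1-\lambda/c_n>0$, and $A:=PT_nP$ is a self-adjoint element of the ideal $\J(\langle N,B\rangle)$ supported under $P$ with $\delta P\le A\le P$; thus $\mathbb{E}_A([\delta,+\infty))=\mathrm{supp}(A)=P$, and since $A$ is compact relative to ${\rm Tr}$ this spectral projection has finite trace, so ${\rm Tr}(P)<+\infty$. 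As $\lambda$ was arbitrary, every $\mathbb{E}^L([0,\lambda])$ has finite trace, which is exactly $e^{-tL}\in\J(\langle N,B\rangle)$ for all $t>0$, i.e. discrete spectrum relative to $B$. The delicate point is precisely that controlling $P$ at a large level $\lambda$ requires an index $n$ with $c_n>\lambda$, forcing the choice $c_n\to+\infty$, which must be balanced against the density of $\F$ through the subsequence selection in the previous paragraph.
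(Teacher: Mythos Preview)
Your argument is correct and follows essentially the same strategy as the paper: for $(ii)\Rightarrow(i)$ the paper uses the resolvent $\lambda(\lambda+L)^{-1}$ where you use the semigroup $e^{-tL}$, and for $(i)\Rightarrow(ii)$ both symmetrize the $\varphi_n$, build $\E$ as a series of bounded Dirichlet forms $(I-T_n)$, and then use that $L\ge cI-K$ with $c$ arbitrarily large and $K\in\J(\langle N,B\rangle)$ to control the spectral projections (the paper takes all $c_n=1$ and gets the large constant from the partial sums $\Theta_m=\sum_{k\le m}T_{n_k}$, while you put the growth into the weights $c_n\to\infty$; your direct bound $PT_nP\ge\delta P$ on the spectral projection is the same mechanism as the paper's estimate $\|q_m(I+L)^{-1/2}\|\le\sqrt{2/(m+1)}$). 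One small slip to fix: the hypotheses give a countable $B$-basis, not a countable set with dense $\mathbb{C}$-linear span, so ``$\F\supset\mathrm{span}\{\xi_k\}$ is dense'' is not quite enough; since each $I-T_n$ commutes with $B$ you have $\E[b\xi_k]\le\|b\|^2\E[\xi_k]<\infty$, whence $B\xi_k\subset\F$ and density follows from $L^2(N,\tau)=\overline{\oplus_k B\xi_k}$, exactly as the paper argues.
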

\begin{proof}
Assume that there exists a Dirichlet form $(\E,\F)$ on $L^2(N,\tau)$ with discrete spectrum relative to $B$. Hence, the associated generator $(L,D(L))$ has its resolvent in the compact ideal space: $(\lambda +L)^{-1}\in \mathcal{J}(\langle N,B\rangle)$. Then for all $\lambda >0$, $S_\lambda :=\lambda(\lambda +L)^{-1}\in \mathcal{J}(\langle N,B\rangle)$. Moreover, any $S_\lambda$ is Markovian on $L^2(N,\tau)$ which implies that there exists a completely positive contraction $\varphi_\lambda:N\to N$ determined by $S_\lambda (x\xi_\tau)=\varphi_\lambda (x)\xi_\tau$ for $x\in N$. Since the $S_\lambda$ are self-adjoint on $L^2(N,\tau)$, the $\varphi_\lambda$ are symmetric with respect to the trace: $\tau (\varphi_\lambda (x)y)=\tau(x\varphi_\lambda (y))$ for all $x,y\in N$. This implies that
\[
\tau (\varphi_\lambda (x))=\tau (\varphi_\lambda (1_N)x)\le \tau(x)\qquad x\in N_+\, .
\]
Last condition iii) in Definition 5.2 above comes from the strong continuity of the resolvent:
\[
\lim_{\lambda\to +\infty} \|\xi-S_\lambda\xi\|_2=0\qquad \xi\in L^2(N,\tau)\, .
\]
The theorem is proved in the "if" direction. In the reverse direction, let us suppose that $B\subseteq N$ is an inclusion with relative property (H) and that $L^2(N,\tau)$ is separable as $B$-module. Let $\{\varphi_n:n\in \mathbb{N}\}$ be a sequence of normal, completely positive, $B$-bimodular contractions of $N$, satisfying the conditions of the definition above. By [Po2 Proposition 2.2] such a sequence always exists. Each $\varphi_n$ extends by $T_n(x\xi_\tau):=\varphi_n (x)\xi_\tau$ to a $B$-bimodular contraction $T_n$ of $L^2(N,\tau)$, which belongs to the compact ideal space $\mathcal{J}(\langle N,B\rangle)$. It is also completely positive with respect to the standard positive cone $L^2_+(N,\tau)$ and its matrix amplifications. It is easy to check that the maps $\varphi_n^*$ appearing in item 5.1 above, have the same properties as the $\varphi_n$'s in definition above. Replacing each $\varphi_n$ by $(\varphi_n +\varphi^*_n)/2$, we can suppose, without loss of generality, that the $\varphi_n$ are symmetric with respect to $\tau$ so that the corresponding $T_n$ are completely positive, self-adjoint contractions on $L^2(N,\tau)$.
\par\noindent
Let $\{\xi_k\in L^2(N,\tau):k\in \mathbb{N}\}$ be an orthonormal basis for the left $B$-module $L^2(N,\tau)$. Recall that this means that $L^2(N,\tau)=\overline{\oplus_{k\in\mathbb{N}}B\xi_k}$.
\par\noindent
For any $k\in\mathbb{N}$ one has $\lim_{n\to +\infty} T_n\xi_k =\xi_k$ and hence there exists $n_k\in\mathbb{N}$ such that
\[
(\xi_j|(I-T_{n_k})\xi_j)\le 2^{-k}\, ,\qquad k\in\mathbb{N}\, ,\quad j\in \{0,1,\dots ,k\}\, .
\]
Let us consider the quadratic form $(\E,\F)$ on $L^2(n,\tau)$ defined by
\[
\E[\xi]:=\sum_{k\in\mathbb{N}}(\xi|(I-T_{n_k})\xi)\qquad \xi\in L^2(N,\tau)\, .
\]
The domain $\F\subseteq L^2(N,\tau)$ being understood as the subspace where the quadratic form is finite. Note first that $\E$ is densely defined since $b\xi_j\in\F$ for all $b\in B$ and $j\in\mathbb{N}$. Noticing that each $\xi\mapsto (\xi|(I-T_{n_k})\xi)$ is a bounded symmetric Dirichlet form on $L^2(N,\tau)$ we see that $(\E,\F)$ is a lower semicontinuous, hence closed Dirichlet form on $L^2(N,\tau)$. The $B$-modularity of $(\E,\F)$ being obvious, what is left to prove is the relative discrete spectrum property.
\par\noindent
The generator $(L,D(L))$ associated to $(\E,\F)$, given by $L=\sum_{k\in\mathbb{N}}(I-T_{n_k})$,
appears as the increasing limit $L=\lim_{m\uparrow +\infty} L_m$ of the bounded operators
\[
L_m :=\sum_{k=0}^m (I-T_{n_k})=(m+1)I-\Theta_m
\]
where $\Theta_m:=\sum_{k=0}^m T_{n_k}$.The important fact is that $\Theta_m$ belongs to the compact ideal space $\mathcal{J}(\langle N,B\rangle)$.
\par\noindent
Let $q_m$ be the spectral projection of $\Theta_m$ corresponding to the interval $[0,(m+1)/2]$ and $p_m:=I-q_m$ the spectral projection corresponding to the interval $((m+1)/2,m+1]$. On one hand we have $\tau (p_m)<+\infty$, which implies $(I+L)^{-1/2}p_m \in \mathcal J(\langle N,B \rangle)$. On the other hand by spectral calculus we have
\[
q_m(I+L)^{-1}q_m \le q_m(I+L_m)^{-1}q_m\le \frac{2}{m+1}q_m
\]
which implies
\[
\|q_m(I+L)^{-1/2}\|\le \sqrt{\frac{2}{m+1}}\, .
\]
Finally, we have $(I+L)^{-1/2}=\lim_{m\to +\infty} (I+L)^{-1/2}p_m$  for the uniform norm, which implies that $(I+L)^{-1/2}$ and $(I+L)^{-1}$ are in $\mathcal{J}(\langle N,B\rangle)$.
\end{proof}
\begin{rem}
It would be very interesting to generalize the above result to type III (factors) inclusions. The first difficulties rely on a convenient identification of the ideal of relative compact operators.
\end{rem}

\section{Relative Property (H) for inclusions of discrete groups and conditionally negative definite functions}

In this section we extend a well known characterization of groups with Property (H) in terms of the existence of a proper c.n.d. function, to inclusions of discrete groups $H< G$.
\par\noindent
We denote by $\lambda$ and $\rho$ the left and right regular representation of $G$ in the Hilbert space $l^2(G)$. If $(\delta_t)_{t\in G}$ is the canonical orthonormal basis of $l^2(G)$, then
$$\lambda(s)\delta_t=\delta_{st} \qquad \rho(s)\delta_t=\delta_{ts^{-1}} \qquad s,t\in G\,.$$
The associated inclusion of von Neumann algebras is $$B=L(H)=\lambda(H)''\subset \lambda(G)'' = L(G)=N\,.$$
$L(G)$ has the canonical finite trace $\tau:L(G)\to\mathbb{C}$ determined by $\tau(\lambda(s))=\delta_e(s)$ for $s\in G$. The standard space $L^2(L(G),\tau)$ identifies canonically with $l^2(G)$ through the unitary map determined by $\lambda(s)\xi_\tau\to\delta_s$ for $s\in G$.  The projection $e_B$ of the basic construction is the orthogonal projection from $l^2(G)$ onto its subspace $l^2(H)$ given by the multiplication operator by the characteristic function $\chi_H$ of the subset $H$ of $G$.

\begin{lem}\label{basicforgroups}{(Basic construction for group inclusions).}

\item i) The basic construction $\big<N,B\big>$ is the commutant $\rho(H)'$ of the right regular representation restricted to $H$.

\item ii) For $T\in \big<N,B\big>$, the map $\varphi_T\,: G\to \mathbb{C}$ given by $s \mapsto (\delta_s,T\delta_s)$ is right $H$-invariant\,: $$\varphi_T(sh)=\varphi_T(s)\qquad s\in G\,,\;h\in H\,.$$

\item iii) The canonical trace on $\big<N,B\big>$ is given by the formula
\[
Tr(T)=\sum_{\widetilde s\in G/H}\varphi_T(s)=\sum_{\widetilde s\in G/H}(\delta_s,T\delta_s) \qquad T\in  \big<N,B\big>\]
where it is understood that the map $\widetilde s\mapsto s$ indicates a section of the projection $G\to G/H$.
\end{lem}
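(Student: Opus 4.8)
The three statements build on one another, so I would prove them in order, using throughout the two facts about the basic construction recalled above: that $\langle N,B\rangle=(JBJ)'$, and that its canonical trace is the \emph{unique} normal, semifinite, faithful trace ${\rm Tr}$ satisfying ${\rm Tr}(xe_By)=\tau(xy)$ for $x,y\in N$.

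For part i) I would first identify the modular conjugation in the group picture. Since $\tau$ is a trace, $J$ is determined by $J(x\xi_\tau)=x^*\xi_\tau$; under the identification $\lambda(s)\xi_\tau\leftrightarrow\delta_s$ this gives $J\delta_s=\delta_{s^{-1}}$. A one-line computation then yields $J\lambda(h)J=\rho(h)$ for every $h\in G$, so that $JBJ=J\lambda(H)''J=\rho(H)''$. Taking commutants and using $\langle N,B\rangle=(JBJ)'$ together with the identity $\rho(H)'''=\rho(H)'$ gives
\[
\langle N,B\rangle=(JBJ)'=(\rho(H)'')'=\rho(H)'\,,
\]
which is i).

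With i) in hand, part ii) is immediate. Let $T\in\langle N,B\rangle=\rho(H)'$ and $h\in H$. Since $\delta_{sh}=\rho(h^{-1})\delta_s$ and $T$ commutes with the unitary $\rho(h^{-1})\in\rho(H)$,
\[
\varphi_T(sh)=(\rho(h^{-1})\delta_s,\,T\rho(h^{-1})\delta_s)=(\rho(h^{-1})\delta_s,\,\rho(h^{-1})T\delta_s)=(\delta_s,T\delta_s)=\varphi_T(s)\,.
\]
Thus $\varphi_T$ is constant on left cosets $sH$, which is ii) and, crucially, guarantees that the sum in iii) does not depend on the chosen section $\widetilde s\mapsto s$.

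For part iii) I would set ${\rm Tr}'(T):=\sum_{\widetilde s\in G/H}(\delta_s,T\delta_s)$, well defined by ii), and show that ${\rm Tr}'$ is a normal, faithful, semifinite trace with ${\rm Tr}'(xe_By)=\tau(xy)$, whence ${\rm Tr}'={\rm Tr}$ by the uniqueness recalled above. To see the trace property I would fix a transversal $\{s_i\}$ for $G/H$ and use the unitary $W:l^2(G)\to l^2(G/H)\otimes l^2(H)$, $\delta_{s_ih}\mapsto\delta_{\widetilde s_i}\otimes\delta_h$. Under $W$ the restricted right regular representation becomes $1\otimes\rho_H$, so $\rho(H)'$ is carried onto $\mathcal{B}(l^2(G/H))\,\overline{\otimes}\,\rho_H(H)'=\mathcal{B}(l^2(G/H))\,\overline{\otimes}\,L(H)$, and one checks on diagonal vectors $\delta_{s_i}\leftrightarrow\delta_{\widetilde s_i}\otimes\delta_e$ that ${\rm Tr}'$ is transported precisely to ${\rm Tr}_{l^2(G/H)}\otimes\tau_H$, manifestly a normal, faithful, semifinite trace. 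Faithfulness is also visible directly: ${\rm Tr}'(T^*T)=0$ forces $T\delta_s=0$ on the transversal, hence on all of $G$ by $\rho(H)$-equivariance. Finally, for $x=\lambda(a),\,y=\lambda(b)$ a short computation using $e_B\delta_t=\chi_H(t)\delta_t$ gives ${\rm Tr}'(\lambda(a)e_B\lambda(b))=\delta_e(ab)=\tau(\lambda(a)\lambda(b))$, and this extends to all $x,y\in N$ by linearity and normality.

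The main obstacle is the traciality in iii): establishing it honestly requires the transversal/tensor-product identification of $\rho(H)'$ with $\mathcal{B}(l^2(G/H))\,\overline{\otimes}\,L(H)$ and the matching of ${\rm Tr}'$ with the product trace, since a direct verification of ${\rm Tr}'(ST)={\rm Tr}'(TS)$ is awkward. The remaining delicate point is the passage from the generators $\lambda(a)e_B\lambda(b)$ to arbitrary elements of $\langle N,B\rangle$, which relies on normality and the weak$^*$-density of ${\rm span}(Ne_BN)$.
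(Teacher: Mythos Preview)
Your arguments for i) and ii) are essentially identical to the paper's. For iii), both you and the paper define the candidate weight ${\rm Tr}'(T)=\sum_{\widetilde s}(\delta_s,T\delta_s)$ and then identify it with the canonical trace, but the routes diverge. The paper never proves traciality of ${\rm Tr}'$ directly: it computes, via the Fourier expansion of $x\in L(G)$, that ${\rm Tr}'(x^*e_Bx)=\sum_{s\in G}|x(s)|^2=\tau(x^*x)={\rm Tr}(x^*e_Bx)$ for \emph{every} $x\in N$, and then invokes the Radon--Nikodym theorem for normal semifinite weights to conclude $d{\rm Tr}'/d{\rm Tr}=1$. Your approach is more structural: you transport the problem through the unitary $W:l^2(G)\to l^2(G/H)\otimes l^2(H)$, identify $\rho(H)'$ with $\mathcal{B}(l^2(G/H))\,\overline{\otimes}\,L(H)$, and recognise ${\rm Tr}'$ as the product trace ${\rm Tr}_{l^2(G/H)}\otimes\tau_H$, which yields normality, faithfulness, semifiniteness and traciality in one stroke; only then do you check the characterising identity on generators $\lambda(a)e_B\lambda(b)$.

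Each approach has its price. The paper's Fourier computation is slightly longer but delivers the equality ${\rm Tr}'={\rm Tr}$ on the full family $x^*e_Bx$, $x\in N$, so the Radon--Nikodym step is clean. Your approach front-loads the work into the tensor identification (which is itself a useful structural fact about the basic construction for group inclusions), but leaves the passage from $\lambda(a)e_B\lambda(b)$ to arbitrary $xe_By$ as a density-and-normality argument; this is fine once you note that $e_B$ has finite ${\rm Tr}'$-trace, so $e_By$ is trace-class and $x\mapsto{\rm Tr}'(xe_By)$ is $\sigma$-weakly continuous. Both proofs are correct.
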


\begin{proof} i) On one side, we have $\big<N,B\big>=(JBJ)'$. On the other side, it is an elementary fact that $J\lambda (h)J=\rho(h)$, $h\in H$.
ii) For $h\in H$ and $T\in \mathcal{B}(l^2(G))$ commuting with $\rho(h)$, we compute
$$\varphi_T(sh)=(\rho(h)^{-1}\delta_s,T\rho(h)^{-1}\delta_s)=(\delta_s,\rho(h)T\rho(h)^{-1}\delta_s)=\varphi_T(s)\qquad s\in G,\, h\in H.$$
iii) Fix an arbitrary section $\sigma\,: \, G/H\to G$ of the homogeneous space $G/H$. The  formula
$$\varphi(T)=\sum_{\gamma\in G/H}(\delta_{\sigma(\gamma)},T\delta_{\sigma(\gamma)})\in [0,+\infty]\qquad T \in \big<N,B\big>_+=\rho(H)'_+$$
defines a normal faithful weight $\varphi$ on the von Neumann algebra $\big<N,B\big>$. We claim that this weight is equal to the natural trace $Tr$.
\par\noindent
For $x\in L(G)$ with Fourier expansion $x=\sum_{s\in G}x(s)\lambda(s)$ (the series $\{x(s)\}_{s\in G}$ converges at least in the $\ell^2$-sense), let us compute
\begin{equation*}\begin{split}
\varphi(x^*e_Bx)&=\sum_{\gamma\in G/H}( \delta_{\sigma(\gamma)},x^*e_Bx\delta_{\sigma(\gamma)})
\\ &=\sum_{\gamma\in G/H}||e_Bx\delta_{\sigma(\gamma)}||^2
\end{split}\end{equation*}
with
\begin{equation*}\begin{split}
||e_Bx\delta_{\sigma(\gamma)}||^2&=||e_B\sum_{s\in G} x(s)\delta_{s\sigma(\gamma)}||^2\\
&=||e_B\sum_{s\in G} x(s\sigma(\gamma)^{-1})\delta_{s}||^2 \\
&=||\sum_{h\in H} x(h\sigma(\gamma)^{-1})\delta_h||^2 \\
&=\sum_{s\in H\sigma(\gamma)^{-1}}||x(s)||^2
\end{split}\end{equation*}
and finally
\begin{equation*}\begin{split}
\varphi(x^*e_Bx)=\sum_{\gamma\in G/H} \sum_{s\in H\sigma(\gamma)^{-1}}||x(s)||^2 = \sum_{s\in G}||x(s)||^2=\tau(x^*x)=Tr(x^*e_Bx)\,.
\end{split}\end{equation*}
This proves that the normal weight $\varphi$ is semifinite and that it coincides with the canonical trace $Tr$ on elements of the form $x^*e_B x$, $x\in N$. \par\noindent
The Radon-Nykodim derivative $d\varphi/dTr$ must be equal to $1$, hence $\varphi=Tr$.

\end{proof}

\begin{thm}\label{propHforgroups}
Let $G$ be a countable discrete group and $H<G$ a subgroup. Then the inclusion of von Neumann algebras $L(H)\subset L(G)$ has the relative Property (H) if and only if there exists a conditionally negative type function $\ell:G\to [0,+\infty)$ such that
\begin{itemize}
  \item [i)] $\ell|_H=0$
  \item [ii)] $\ell$ is proper on $G/H$.
\end{itemize}
\end{thm}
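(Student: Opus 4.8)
The plan is to prove the two implications separately, the construction of a relative conditionally negative definite function out of Property (H) being the substantial one. For the easy direction, suppose $\ell:G\to[0,+\infty)$ is conditionally negative definite with $\ell|_H=0$ and proper on $G/H$. First I would note that the zero set $H_\ell:=\{s:\ell(s)=0\}$ is a subgroup on which $\ell$ is bi-invariant (from the cocycle realization $\ell(s)=\|b(s)\|^2$ one gets $\ell(sh)=\|b(s)+\pi(s)b(h)\|^2=\ell(s)$ and symmetrically on the left whenever $b(h)=0$); since $H\subseteq H_\ell$, $\ell$ is $H$-bi-invariant. By Schoenberg's theorem each $e^{-t\ell}$ is a normalized, $H$-bi-invariant, positive definite function, so the Fourier multipliers $\varphi_t(\lambda(s)):=e^{-t\ell(s)}\lambda(s)$ are normal, completely positive, $L(H)$-bimodular, $\tau$-symmetric contractions with $\tau\circ\varphi_t\le\tau$. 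The operator $T_{\varphi_t}$ is multiplication by $e^{-t\ell}$ on $l^2(G)$; its spectral projection onto $\{\ell\le c\}$ is, by Lemma \ref{basicforgroups}(iii), a projection in $\langle N,B\rangle$ of trace $\sharp\{\widetilde s\in G/H:\ell(s)\le c\}$, finite by properness. Hence $T_{\varphi_t}\in\mathcal{J}(\langle N,B\rangle)$ and $T_{\varphi_t}\to I$ strongly as $t\downarrow 0$, so Definition 5.2 holds and $L(H)\subset L(G)$ has relative Property (H). (Alternatively one invokes the Proposition preceding Section 5 together with Theorem \ref{propH}.)

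For the converse, assume relative Property (H) and fix a net $\{\varphi_i\}$ of normal, completely positive, $L(H)$-bimodular contractions with $\tau\circ\varphi_i\le\tau$, $T_{\varphi_i}\in\mathcal{J}(\langle N,B\rangle)$ and $T_{\varphi_i}x\xi_\tau\to x\xi_\tau$. To each I attach the diagonal function $\phi_i(s):=(\delta_s\,|\,T_{\varphi_i}\delta_s)=\tau(\lambda(s)^*\varphi_i(\lambda(s)))$. The decisive point is that $\phi_i$ is \emph{positive definite}: using the trace property together with complete positivity of $\varphi_i$,
\[
\sum_{j,k}c_j\overline{c_k}\,\phi_i(s_j^{-1}s_k)=\tau\Bigl(\sum_{j,k}(c_j\lambda(s_j))\,\varphi_i(\lambda(s_j)^*\lambda(s_k))\,(c_k\lambda(s_k))^*\Bigr)\ge 0 .
\]
By Lemma \ref{basicforgroups}(ii), $T_{\varphi_i}\in\rho(H)'$ makes $\phi_i$ right $H$-invariant, while left $L(H)$-modularity gives $T_{\varphi_i}\in\lambda(H)'$ and hence left $H$-invariance; thus $\phi_i$ is $H$-bi-invariant and in particular $\phi_i|_H\equiv\phi_i(e)$. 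The membership $T_{\varphi_i}\in\mathcal{J}(\langle N,B\rangle)$ forces $\phi_i\in c_0(G/H)$: for a finite-trace projection $p$ one has $\sum_{\widetilde s\in G/H}(\delta_s|p\delta_s)=\mathrm{Tr}(p)<+\infty$, and a Cauchy--Schwarz estimate propagates vanishing at infinity through the norm-dense span of $Ne_BN$ and then to norm limits. Finally $\phi_i(s)\to 1$ pointwise and $\phi_i(e)\to 1$; dividing by $\phi_i(e)$ and passing to real parts, I obtain normalized, $H$-bi-invariant, real positive definite functions in $c_0(G/H)$ converging pointwise to $1$.

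It remains to assemble these into one proper function. Enumerating $G/H=\{\gamma_0,\gamma_1,\dots\}$, I would choose indices $n_k$ so that $1-\mathrm{Re}\,\phi_{n_k}(\gamma_j)<2^{-k}$ for all $j\le k$ and set $\ell:=\sum_k\bigl(1-\mathrm{Re}\,\phi_{n_k}\bigr)$. Each summand is conditionally negative definite (being $1$ minus a normalized positive definite function), nonnegative, bounded by $2$, $H$-bi-invariant and vanishing on $H$; the diagonal choice guarantees pointwise convergence of the series, so $\ell$ is a well-defined, $[0,+\infty)$-valued, $H$-bi-invariant conditionally negative definite function with $\ell|_H=0$. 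Properness on $G/H$ follows because each $\phi_{n_k}\in c_0(G/H)$ yields $1-\mathrm{Re}\,\phi_{n_k}\ge\tfrac12$ off a finite set $F_k$, so $\ell\ge m/2$ outside $F_1\cup\cdots\cup F_m$; taking $m>2C$ shows $\{\widetilde s\in G/H:\ell(s)\le C\}$ is finite for every $C$, which is the desired properness.

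The main obstacle is precisely this converse extraction: a priori the maps $\varphi_i$ are not Fourier multipliers, so their diagonal symbols $\phi_i$ carry no evident positivity. The key device is the trace identity displayed above, which converts complete positivity of $\varphi_i$ into genuine positive definiteness of $\phi_i$; once this and the $c_0(G/H)$ decay coming from the compact ideal space are secured, the passage to a single proper $\ell$ is the classical diagonal construction transported to the homogeneous space $G/H$.
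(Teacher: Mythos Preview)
Your argument is correct. The forward direction is essentially identical to the paper's. For the converse, however, you take a genuinely different route: the paper first invokes Theorem~\ref{propH} to produce a $B$-invariant Dirichlet form with relatively discrete spectrum and then extracts the positive definite functions from the \emph{resolvent}, setting $\omega_\varepsilon(s)=(\delta_s\,|\,(I+\varepsilon L)^{-1}\delta_s)$; since the resolvent is self-adjoint and Markovian, $\omega_\varepsilon$ is automatically real and symmetric, and the $c_0(G/H)$ decay comes from $(I+\varepsilon L)^{-1}\in\mathcal{J}(\langle N,B\rangle)$. You bypass Theorem~\ref{propH} entirely and work directly with the approximating maps $\varphi_i$ of Definition~5.2, reading off positive definiteness of the diagonal symbols $\phi_i(s)=\tau(\lambda(s)^*\varphi_i(\lambda(s)))$ via the trace identity and complete positivity. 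What your approach buys is directness: it shows that the Dirichlet-form detour is not needed for the group case. What the paper's approach buys is economy of symmetrization: the resolvent is already self-adjoint, so there is no need to normalize by $\phi_i(e)$ or pass to real parts, and the $c_0(G/H)$ argument is marginally cleaner because $(I+\varepsilon L)^{-1}$ is positive. Both constructions then finish with the same diagonal summation $\ell=\sum_k(1-\psi_k)$ and the same properness estimate. One small notational point: after you normalize and take real parts you should carry the modified functions through the definition of $\ell$, rather than reverting to the symbol $\phi_{n_k}$.
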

\begin{proof}
Let us suppose that i) and ii) are satisfied. Then for any $t>0$, $e^{-t\ell}$ is a positive type function equal to $1$ on $H$ and it induces a normal, completely positive, trace preserving contraction $\phi_t$ on the von Neumann algebra $L(G)$, characterized by
\[
\phi_t (\lambda_u)=e^{-t\ell(u)}\lambda_u\qquad u\in G\, .
\]
Let $(\pi, H_\pi)$ be the orthogonal representation and $c:G\to H_\pi$ the 1-cocycle associated to the c.n.d. function $\ell$. If $\ell (u)=0$ then $0=\ell (u)=\|c(u)\|^2_{H_\pi}$ so that $c(u)=0$ for all $u\in H$. Then, $c(vu)=c(v)+\pi (v)c(u)=c(v)$ for all $v\in G$ and $u\in H$ and $\ell(vu)=\ell(v)$, $v\in G$, $u\in H$.
Finally, as $\ell (v^{-1})=\ell (v)$ for all $v\in G$, one also has $\ell (uv)=\ell(v)$ for all $v\in G$ and $u\in H$.
\par\noindent
Consequently we shall have
\[
\phi_t (\lambda_v\lambda_u)=e^{-t\ell (vu)}\lambda_v\lambda_u =e^{-t\ell (v)}\lambda_v\lambda_u =\phi_t (\lambda_v)\lambda_u\qquad v\in G\, ,\,\,u\in H
\]
and
\[
\phi_t (ab)=\phi_t(a)b\qquad a\in L(G)\, ,\,\, b\in L(H).
\]
Similarly
\[
\phi_t (ba)=b\phi_t(a)\qquad a\in L(G)\, ,\,\, b\in L(H).
\]
This proves that $\phi_t$ is a $L(H)$-bimodular map for all $t>0$.
\par\noindent
The self-adjoint operator $T_t$ on $l^2(G)$, induced by $\phi_t$ on $L(G)$, is just the multiplication operator by the function $e^{-t\ell}$. Its spectrum coincides with the set of values $e^{-t\lambda}$ where $\lambda$ runs in the range $\ell (G)$ which is, by assumption ii), a discrete subset of $[0,+\infty)$.
\par\noindent
The eigenspace $E_\lambda$, corresponding to the eigenvalue $e^{-t\lambda}$, is the set of functions in $l^2(G)$ supported by $S_\lambda :=\{s\in G:\ell(s)=\lambda\}$. Again by assumption ii), $S_\lambda$ is a finite union of right $H$-cosets: $S_\lambda =\bigcup_{i=1}^k u_i H$ for some $u_1 ,\cdots , u_k\in G$. The corresponding spectral projection $P_\lambda$ is then the sum of projections on those cosets:
$$P_\lambda=\sum_{i=1}^k \text{ multiplication by }\chi_{u_iH}\,.$$
\par\noindent
Since, for $B:=L(H)\subset L(G)=N$, the projection $e_B$ is just the multiplication operator by the characteristic function $\chi_H$ of the subgroup, the projection $\lambda_u e_B\lambda_u^{-1}$, for $u\in G$, is the multiplication operator by the function $\chi_{uH}$. The spectral projection $P_\lambda$ onto $E_\lambda$ is then a finite sum of projections of trace one (for the trace $Tr$), and its  trace is equal to the number of right $H$-cosets in $S_\lambda$\,:
\[
{\rm Tr\,}(P_\lambda)={\rm Tr\,}(\sum_{i=1}^k \lambda_{u_i} e_B\lambda_{u_i}^{-1})=\sum_{i=1}^k {\rm Tr\,}(\lambda_{u_i} e_B\lambda_{u_i}^{-1})=\sum_{i=1}^k {\rm Tr\,}(e_B)=\sum_{i=1}^k 1 =k\, .
\]
This proves that $T_t$ belong to $\mathcal J\big(\big<N,B\big>\big)$, which ends the proof in the forward direction.

\medskip\noindent
Conversely, let us suppose that the inclusion $L(H)\subset L(G)$ has the relative property $(H)$. By Theorem \ref{propH}, there exists an $L(H)$-invariant symmetric Dirichlet form with generator $L$ and discrete spectrum relative to the subalgebra $B=L(H)$.
\par\noindent
Let us observe that, for $\varepsilon >0$, the resolvent maps $(I+\varepsilon L)^{-1}$ are a completely positive, normal contractions of the von Neumann algebra $N=L(G)$ and that the map
$$\omega_\varepsilon\,:\,G\to [0,+\infty)\qquad\; \omega_\varepsilon(s)=(\delta_s,(I+\varepsilon L)^{-1}\delta_s)=\tau(\lambda(s)^*(I+\varepsilon L)^{-1}(\lambda(s)))$$
is positive definite on $G$ and $H$-right invariant. Notice that $\o_\varepsilon$ is symmetric because it is positive definite and real, as $L$ is self-adjoint. Moreover, by the weak$^*$-continuity of the resolvent, one has
$$\lim_{\varepsilon\to 0} \omega_\varepsilon(s)=1\qquad s\in G.$$
We claim that $\omega_\varepsilon$ vanishes at infinity on the quotient space $G/H$. This holds true because $(I+\varepsilon L)^{-1}$ being in the compact ideal $\mathcal J\big(\big<N,B\big>\big)$, it will be a uniform limit of trace class operators $T_n\in L^1(\big<N,B\big>,Tr)$ and that, for such $T_n$, the function $s\mapsto (\delta_s,T_n \delta_s)$ is summable on $G/H$ (by Lemma \ref{basicforgroups} iii)) thus vanishing at infinity.
\par\noindent
Let $(F_k)_{k\geq 1}$ be an increasing family of finite subsets of $G$ such that $\cup_k F_k=G$. For any $k$, let us choose $\varepsilon_k>0$ such that
\[
0\leq 1-\omega_{\varepsilon_k}(s)\leq 2^{-k}\qquad s\in F_k
\]
and consider the function $\ell\,:\,G\to [0,+\infty)$ defined by
\[
\ell(s)=\sum_{k=1}^\infty (1-\omega_{\varepsilon_k}(s))\qquad s\in G.
\]
By the choice of $\varepsilon_k$, the series converges for any $s$ in any $F_k$ and thus for any $s\in G$ so that $\ell$ is well defined on $G$. The function $\ell$ is c.n.d. as a sum of c.n.d. functions and a right $H$-invariant function as a sum of right-$H$-invariant functions. It can thus be considered as a function on $G/H$.
\par\noindent
Moreover, as the functions $\omega_\varepsilon$ vanish at infinity on $G/H$, the set $\Gamma_k=\{\widetilde s\in G/H\,|\,\omega_{\varepsilon_k}(s)\geq 1/2\}$ is finite and for $\widetilde s\not \in \cup_{k=1}^N \Gamma_k$, one has $\ell(s)\geq N/2$. Hence, for any $N\in \mathbb{N}$, the set $\{\widetilde s\in G/H\,|\, \ell(s)\leq N/2\}$ is finite, which proves that $\ell$ is proper on $G/H$.
\end{proof}
\begin{cor} If $H<G$ and $L(H)\subset L(G)$ has relative property $(H)$, then $H$ is a quasi normal subgroup: each orbit of the left action of $H$ on the right-cosets space $G/H$ is finite.
\end{cor}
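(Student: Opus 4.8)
The plan is to reduce the corollary to the characterization of relative Property (H) just established in Theorem \ref{propHforgroups} and then exploit the invariance of the associated conditionally negative definite function. First I would apply that theorem: since $L(H)\subset L(G)$ has the relative Property (H), there exists a conditionally negative definite function $\ell:G\to[0,+\infty)$ with $\ell|_H=0$ and $\ell$ proper on $G/H$. The whole point will be that this single function simultaneously controls the geometry of the level sets (via properness) and is constant on the orbits we wish to show are finite (via invariance).

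Next I would recall, from the forward direction of the proof of Theorem \ref{propHforgroups}, that the vanishing of $\ell$ on $H$ forces the associated $1$-cocycle $c$ to vanish on $H$, whence $c(vu)=c(v)$ and therefore $\ell(vu)=\ell(v)$ for all $v\in G$ and $u\in H$; combining this right $H$-invariance with the symmetry $\ell(v^{-1})=\ell(v)$ then yields the left invariance $\ell(uv)=\ell(v)$ as well. Thus $\ell$ is bi-$H$-invariant and descends to a well-defined function $\ell_{G/H}$ on the right coset space $G/H$, which by left invariance is constant on each orbit of the left action $h\cdot sH=(hs)H$ of $H$ on $G/H$: indeed $\ell_{G/H}((hs)H)=\ell(hs)=\ell(s)=\ell_{G/H}(sH)$ for every $h\in H$.

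The conclusion is then immediate. Properness of $\ell$ on $G/H$ means that each sublevel set $\{\widetilde s\in G/H:\ell_{G/H}(\widetilde s)\le N\}$ is finite, so in particular every level set $\ell_{G/H}^{-1}(\{c\})$ with $c<+\infty$ is finite, being contained in $\{\ell_{G/H}\le c\}$. Since each $H$-orbit in $G/H$ lies inside a single level set of $\ell_{G/H}$, every such orbit is finite, which is exactly the asserted quasi-normality of $H$. I do not expect a genuine obstacle here, as the argument is essentially formal once the function $\ell$ is in hand; the only point deserving care is the passage from right $H$-invariance to honest left invariance on $G/H$, which rests on the symmetry $\ell(v^{-1})=\ell(v)$, and the bookkeeping identifying the left $H$-action on $G/H$ with constancy of $\ell_{G/H}$ along orbits.
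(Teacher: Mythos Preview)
Your proposal is correct and follows essentially the same approach as the paper: invoke the conditionally negative definite function $\ell$ furnished by Theorem~\ref{propHforgroups}, use the symmetry $\ell(s^{-1})=\ell(s)$ together with right $H$-invariance to obtain left $H$-invariance, and conclude that each $H$-orbit in $G/H$ lies in a level set of the proper function $\ell_{G/H}$ and is therefore finite. Your write-up simply supplies more detail than the paper's terse version.
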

\begin{proof} The c.n.d. function $\ell$ constructed in Theorem \ref{propHforgroups} is left $H$-invariant, as it clearly satisfies $\ell(s^{-1})=\ell(s)$ for $s\in G$. It is thus constant on the orbits of the left action of $H$ on the right cosets space $G/H$. A subset of $G/H$ on which the proper function $\ell$ is constant must be finite.
\end{proof}

\begin{cor}
If $H<G$ is normal, then the inclusion $L(H)\subset L(G)$ has relative property $(H)$ if and only if the quotient group $G/H$ has the Haagerup property.
\end{cor}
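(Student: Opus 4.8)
The plan is to reduce the statement to Theorem~\ref{propHforgroups} and then exploit the fact that, when $H$ is normal, the quotient $G/H$ is itself a group, for which the Haagerup property is equivalent to the existence of a proper conditionally negative definite function. By Theorem~\ref{propHforgroups}, the inclusion $L(H)\subset L(G)$ has the relative Property (H) if and only if there is a c.n.d.\ function $\ell\colon G\to[0,+\infty)$ with $\ell|_H=0$ and $\ell$ proper on $G/H$. So everything comes down to matching such functions on $G$ with proper c.n.d.\ functions on the quotient group.

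For the forward implication, I would start from such an $\ell$. As already observed inside the proof of Theorem~\ref{propHforgroups}, the vanishing of $\ell$ on $H$ together with the cocycle identity forces $\ell$ to be both left and right $H$-invariant, i.e.\ $\ell(us)=\ell(s)=\ell(su)$ for all $s\in G$ and $u\in H$. Since $H$ is normal, $G/H$ is a group and $\ell$ is constant on the cosets $sH=Hs$; hence $\ell$ factors as $\ell=\bar\ell\circ\pi$ through the quotient homomorphism $\pi\colon G\to G/H$. I would then check that $\bar\ell$ is c.n.d.\ on $G/H$: given $\bar g_1,\dots,\bar g_n\in G/H$ and scalars $c_i$ with $\sum_i c_i=0$, lift each $\bar g_i$ to some $g_i\in G$; because $\bar\ell(\bar g_i^{-1}\bar g_j)=\ell(g_i^{-1}g_j)$, the defining inequality for $\bar\ell$ is term by term the one for $\ell$. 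Properness of $\bar\ell$ on $G/H$ is literally the properness of $\ell$ on $G/H$, so $G/H$ carries a proper c.n.d.\ function and therefore has the Haagerup property.

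For the converse, assume $G/H$ has the Haagerup property and pick a proper c.n.d.\ function $\bar\ell\colon G/H\to[0,+\infty)$ with $\bar\ell(e)=0$. Setting $\ell:=\bar\ell\circ\pi$, the pullback of a c.n.d.\ function along the homomorphism $\pi$ is again c.n.d., while $\ell|_H=\bar\ell(e)=0$ and $\ell$ is proper on $G/H$ by construction. Theorem~\ref{propHforgroups} then yields the relative Property (H) for $L(H)\subset L(G)$.

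The only points requiring genuine care are the descent and the lift of conditional negative definiteness across $\pi$; but both reduce to the single observation that $\ell$ factors through the quotient homomorphism, so the c.n.d.\ test for $\bar\ell$ and for $\ell$ agree after lifting representatives. Normality of $H$ enters precisely here: it guarantees that $G/H$ is a group, so that the Haagerup property is meaningful, and that the bi-invariant $\ell$ descends to an honest function on it. I do not expect any serious obstacle beyond recording these routine verifications.
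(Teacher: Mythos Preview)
Your argument is correct and is precisely the natural one: the paper states this corollary without proof, treating it as an immediate consequence of Theorem~\ref{propHforgroups}, and your reduction via descent/lift of c.n.d.\ functions along the quotient homomorphism is exactly how one fills in the details. The only verifications needed are the ones you flag---bi-$H$-invariance of $\ell$ (already established in the proof of Theorem~\ref{propHforgroups}) and the routine fact that c.n.d.\ functions push forward and pull back along group homomorphisms---and you handle both correctly.
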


\begin{ex}
Let us consider the free group $\FF_2$ with generators $a,b\in \FF_2$ and the abelian subgroup $H:=\{a^k:k\in \Z\}$ isomorphic to the additive group $\Z$ of integer numbers.
\par\noindent
$H$ is not quasi-normal in $\FF_2$, so that the inclusion $H<G$ has not the relative property $(H)$, though both $G$ and $H$ have property $(H)$.
\par\noindent
More generally, for countable discrete groups $G_1,G_2$, the inclusion $G_1<G_1*G_2$ is not quasi-normal as soon as $G_1$ is infinite and $G_2$ has at least two elements. Hence, this inclusion has not the relative property $H$.
\end{ex}

\normalsize
\begin{center} \bf REFERENCES\end{center}

\normalsize
\begin{enumerate}


\bibitem[Ara]{Ara} H. Araki, \newblock{Some properties of modular conjugation operator of von Neumann algebras and a non-commutative Radon-Nikodym theorem with a chain rule},
                             \newblock{\it Pacific J. Math.} {\bf 50} {\rm (1974)}, 309-354.

\bibitem[BeDe]{BeDe} A. Beurling and J. Deny, \newblock{Dirichlet spaces},
\newblock{\it Proc. Nat. Acad. Sci.} {\bf 45} {\rm (1959)}, 208-215.


\bibitem[Bra]{Bra} M. Brannan, \newblock{Approximation properties for free orthogonal and free unitary quantum groups}, \newblock{\it J. Reine Angew. Math.} {\bf 672} {\rm (2012)}, 223-251.

\bibitem[BR]{BR} Bratteli O., Robinson D.W., \newblock{``Operator algebras and Quantum Statistical Mechanics 1''},
Second edition, 505 pages, \newblock{Springer-Verlag, Berlin, Heidelberg, New York, 1987}.

\bibitem[CaSk]{CaSk} M. Caspers, A. Skalski, \newblock{The Haagerup approximation property for von Neumann algebras via quantum Markov semigroups and Dirichlet forms},
\newblock{\it Comm. Math. Phys.} {\bf 336} no. 3, {\rm (2015)}, 1637--1664.

\bibitem[CCJJV]{CCJJV} P.-A. Cherix, M. Cowling, P. Jolissaint, P. Julg, A. Valette, \newblock{``Groups with the Haagerup property. Gromov's a-T-menability''},
    \newblock{Progress in Mathematics, 197,\\ Birkh\"auser Verlag, Basel, 2001}

\bibitem[Chr]{Chr} E. Christensen, \newblock{Subalgebras of finite algebras}, \newblock{\it Math. Ann.} {\bf 243} {\rm (1979)}, 17--29.

\bibitem[C1]{C1} F. Cipriani, \newblock{Dirichlet forms and Markovian semigroups on standard forms of von Neumann algebras}, \newblock{\it J. Funct. Anal.} {\bf 147} {\rm (1997)}, no. 1, 259--300.

\bibitem[C2]{C2} F. Cipriani, \newblock{``Dirichlet forms on Noncommutative spaces''}, \newblock{Springer ed. L.N.M. 1954, 2007}.

\bibitem[C3]{C3} F. Cipriani, \newblock{``Noncommutative potential theory: A survey''},
                              \newblock{\it J. of Geometry and Physics} {\bf 105} {\rm (2016)}, 25--59.

\bibitem[CFK]{CFK} F. Cipriani, U. Franz, A. Kula, \newblock{Symmetries of L\'evy processes on compact quantum groups, their Markov semigroups and potential theory},
\newblock{\it J. Funct. Anal.} {\bf 266} {\rm (2014)}, no. 5, 2789--2844.

\bibitem[CS1]{CS1} F. Cipriani, J.-L. Sauvageot, \newblock{Derivations as square roots of Dirichlet forms}, \newblock{\it J. Funct. Anal.} {\bf 201} {\rm (2003)}, no. 1, 78--120.


\bibitem[CS3]{CS3} F. Cipriani, J.-L. Sauvageot, \newblock{Fredholm modules on P.C.F. self-similar fractals and their conformal geometry},
\newblock{\it Comm. Math. Phys.} {\bf 286} {\rm (2009)}, no. 2, 541--558.


\bibitem[CS5]{CS5} F. Cipriani, J.-L. Sauvageot, \newblock{Negative type functions on groups with polynomial growth},
\newblock{\it in "Noncommutative Analysis, Operator Theory and Applications", Operator Theory Advances and Applications} {\bf 252} {\rm (2016)}, D. Alpay, F. Cipriani, F. Colombo, D. Guido, I. Sabadini, J.-L. Sauvageot eds., Birkhauser

\bibitem[CGIS1]{CGIS1} F. Cipriani, D. Guido, T. Isola, J.-L. Sauvageot,
\newblock{``Integrals and Potential of differential 1-forms on the Sierpinski Gasket''},
\newblock{\it Adv. in Math.} {\bf 239} {\rm (2013)}, 128--163.

\bibitem[CGIS2]{CGIS2} F. Cipriani, D. Guido, T. Isola, J.-L. Sauvageot,  \newblock{``Spectral triples for the Sierpinski Gasket''},
\newblock{\it J. Funct. Anal.}, {\bf 266} {\rm (2014)}, 4809--4869.

\bibitem[Co1]{Co1} A. Connes, \newblock{Caracterisation des espaces vectoriels ordonn\'es sous-jacents aux alg\`ebres de von Neumann},
\newblock{\it  Ann. Inst. Fourier (Grenoble)} {\bf 24} {\rm (1974)}, 121-155.

\bibitem[Co2]{Co2} A. Connes, \newblock{``Noncommutative Geometry''}, \newblock{Academic Press, 1994}.


%

\bibitem[DFSW]{DFSW} M. Daws, P. Fima, A. Skalski, S. White, \newblock{The Haagerup property for locally compact quantum groups}, \newblock{\it  J. Reine Angew. Math.} {\bf 711} {\rm (2016)}, 189-229.

\bibitem[deH]{deH} P. de la Harpe, \newblock{``Topics in Geometric Group Theory''}, \newblock{Chicago Lectures in Mathematics, The University of Chicago Press, 2000}.

\bibitem[Dix]{Dix} J. Dixmier, \newblock{``Les C$^*$--alg\`ebres et leurs repr\'esentations''}, \newblock{Gauthier--Villars, Paris, 1969}.

\bibitem[FOT]{FOT} M. Fukushima, Y. Oshima, M. Takeda, \newblock{``Dirichlet Forms and Symmetric Markov Processes''},
                                                       \newblock{de Gruyter Studies in Mathematics 19, 2nd ed. 2010}.

\bibitem[GHJ]{GHJ} F.M. Goodman, P. de la Harpe, V.F.R. Jones, \newblock{``Coxeter graphs and towers of algebras''},
                                                       \newblock{Mathematical Sciences Research Institute Publications, 14}, Springer-Verlag, New York, 1989. x+288 pp..

\bibitem[GL1]{GL1} S. Goldstein, J.M. Lindsay, \newblock{Beurling-Deny conditions for KMS-symmetric dynamical semigroups},
                                               \newblock{\it C. R. Acad. Sci. Paris Sér. I Math.} {\bf 317} no. 11, {\rm (1993)}, 1053--1057.

%
%

\bibitem[GK]{GK} Guentner-Kaminker, \newblock{Exactness and uniform embeddability of discrete groups}, \newblock{\it J. London Math. Soc.} {\bf 70}, {\rm (2004)}, 703-718.

\bibitem[Haa1]{Haa1} U. Haagerup, \newblock{Standard forms of von Neumann algebras}, \newblock{\it Math. Scand.} {\bf 37} {\rm (1975)}, 271-283.

\bibitem[Haa2]{Haa2} U. Haagerup, \newblock{An example of a nonnuclear C$^*$-algebra, which has the metric approximation property},
\newblock{\it Invent. Math.} {\bf 50} {\rm (1978)}, no. 3, 279-293.

\bibitem[Ki]{Ki} J. Kigami, \newblock{``Analysis on Fractals''}, \newblock{Cambridge Tracts in Mathematics vol.
                                     {\bf 143}, Cambridge University Press, 2001}.

\bibitem[J]{J} V.F.R. Jones, \newblock{Index for subfactors.}, \newblock{\it Invent. Math.} {\bf 72} {\rm (1983)}, 1-26.

\bibitem[Po1]{Po1} S. Popa, \newblock{Correspondences}, \newblock{\it INCREST Bucharest preprint, unpublished} {\rm (1986)}, 95 pages.

\bibitem[Po2]{Po2} S. Popa, \newblock{On a class of type $\Pi_1$ factors with Betti numbers invariants}, \newblock{\it Annals of Math.} {\bf 163} {\rm (2006)}, 809-899.

\bibitem[R]{R} W. Rudin, \newblock{``Principles of Mathematical Analysis''}, \newblock{International Series in Pure and Applied Mathematics, McGraw-Hill Higher Education, 1976}.

\bibitem[S1]{S1} J.-L. Sauvageot, \newblock{Sur le produit tensoriel relatif d'espaces de Hilbert}, \newblock{\it J. Operator Theory} {\bf 9} {\rm (1983)}, 237-252.



\bibitem[S4]{S4} J.-L. Sauvageot, \newblock{Semi--groupe de la chaleur transverse sur la C$^*$--alg\`ebre d'un feuilletage riemannien},
                                  \newblock{\it J. Funct. Anal.} {\bf 142} {\rm (1996)}, 511-538.

\bibitem[SiSm]{SiSm} A.M. Sinclair, R.R. Smith, \newblock{``Finite von Neumann algebras and Masas''},
                                                \newblock{London Mathematical Society Lectures Notes Series 351, Cambridge University Press, 2008}.

\bibitem[T]{T} M. Takesaki, \newblock{``Theory of Operator Algebras I''}, Encyclopedia of Mathematical Physics, 415 pages, \newblock{Springer-Verlag, Berlin, Heidelberg, New York, 2000}.

\bibitem[V1]{V1} D.V. Voiculescu, \newblock{The analogues of entropy and of Fisher's information measure in free probability theory},
\newblock{\it Invent. Math.} {\bf 132} {\rm (1998)}, 189-227.

\bibitem[vN]{vN} J. von Neumann, \newblock{Zur allgemeinen Theorie des Masses} \newblock{\it Fund. Math.} {\bf 13} (1) {\rm (1929)}, 73-111.

\end{enumerate}
\end{document}